\numberwithin{equation}{section}
\newcommand{\fr}{\penalty-20\null\hfill$\blacksquare$}                      
\newtheorem{theorem}{Theorem}[section]
\newtheorem{lemma}[theorem]{Lemma}
\newtheorem{proposition}[theorem]{Proposition}
\newtheorem{definition}[theorem]{Definition}
\newtheorem{assumption}[theorem]{Assumption}
\newtheorem{remark}[theorem]{Remark}
\newcommand{\geo}{\rm Geo}
\newcommand{\esup}[1]{{\mathrm{ess~sup}}{#1}}
\newcommand{\BL}{{\sf BL}}
\newcommand{\restr}[1]{\lower3pt\hbox{$|_{#1}$}}
\newcommand{\mm}{\mathfrak m}  
\newcommand{\ms}{(X,\d,\mm)}
\newcommand{\rcd}{{\rm RCD}(K, \infty)}
\newcommand{\eps}{\varepsilon}
\newcommand{\nchi}{{\raise.3ex\hbox{$\chi$}}}
\newcommand{\limi}{\varliminf}
\newcommand{\lims}{\varlimsup}
\newcommand{\lmt}[2]{\mathop{\lim}_{{#1} \rightarrow {#2}} }
\newcommand{\lmts}[2]{\mathop{\overline{\lim}}_{{#1} \rightarrow {#2}} }
\newcommand{\supp}{\mathop{\rm supp}\nolimits}   
\newcommand{\Lip}{\mathop{\rm Lip}\nolimits} 
\newcommand{\lip}[1]{{\rm{lip}}({#1})}
\newcommand{\loclip}[2]{{\rm{lip}}_{#2}({#1})}         
\renewcommand{\d}{{\mathrm d}}
\newcommand{\dt}{{\d t}}
\newcommand{\D}{{\mathrm D}}
\newcommand{\N}{\mathbb{N}}
\newcommand{\R}{\mathbb{R}}
\newcommand{\Z}{\mathbb{Z}}
\title{ Sobolev spaces on warped products}
\begin{document}
\author{ Nicola Gigli\thanks{SISSA, ngigli@sissa.it
}\and Bang-Xian Han \thanks
{ Universit\"{a}t Bonn, han@iam.uni-bonn.de}
}

\date{\today}
\maketitle

\begin{abstract}
We study  the structure of Sobolev spaces on the cartesian/warped products of a given metric measure space and an interval.

Our main results are:
\begin{itemize}
\item[-] the characterization of the Sobolev spaces in such products 
\item[-] the proof that, under natural assumptions, the warped products possess the Sobolev-to-Lipschitz property, which is key for geometric applications.
\end{itemize} 
The results of this paper have been needed in the recent proof of the `volume-cone-to-metric-cone' property of ${\sf RCD}$ spaces obtained by the first author and De Philippis.
 \end{abstract}

\textbf{Keywords}: warped product, Sobolev space, metric measure space.\\

\textbf{MSC2010}:  53C23, 46E35
\tableofcontents

\section{Introduction}
There is a well established definition of the  space $W^{1,2}(X,\d,\mm)$ of real valued Sobolev functions defined on a metric measure space $(X,\d,\mm)$ (\cite{Cheeger00}, \cite{Shanmugalingam00},  \cite{AmbrosioGigliSavare11}). A function $f\in W^{1,2}(X,\d,\mm)$ comes with a function $|\D f|_X\in L^2(X,\mm)$, called minimal weak upper gradient, playing the role of what the modulus of the distributional differential is in the smooth setting.

In this paper we are interested in the structure of the Sobolev spaces and the corresponding minimal weak upper gradients under some basic geometric constructions. The basic problem is the following. Let $(X,\d_X,\mm_X)$ and $(Y,\d_Y,\mm_Y)$ be two metric measure spaces and consider the space $X\times Y$ endowed with the product measure $\mm_c:=\mm_X\times\mm_Y$ and the product distance $\d_c$ defined as
\[
\d_c^2\big((x_1,y_1),(x_2,y_2)\big):=\d_X^2(x_1,x_2)+\d_Y^2(y_1,y_2),\qquad\forall x_1,x_2\in X,\ y_1,y_2\in Y.
\]
Then one asks what is the relation between Sobolev functions on $X\times Y$ and those on $X,Y$. Guided by the Euclidean case, one might conjecture that $f\in W^{1,2}(X\times Y)$ if and only if for $\mm_X$-a.e.\ $x$ the function $y\mapsto f(x,y)$ is in $W^{1,2}(Y)$, for $\mm_Y$-a.e.\ $y$ the function $x\mapsto f(x,y)$ is in $W^{1,2}(X)$ and the quantity
\[
\sqrt{|\D f(\cdot,y)|_X^2(x)+|\D f(x,\cdot)|_Y^2(y)}
\]
is in $L^2(X\times Y,\mm_c)$. Then one expects the above quantity to coincide with $|\D f|_{X\times Y}$.

Curiously, this kind of problem has not been studied until recently and, despite the innocent-looking statement, the full answer is not yet known. 

The first  result in this direction has been obtained in \cite{AmbrosioGigliSavare11-2}, where it has been proved that the conjecture is true under the very restrictive assumption that the spaces considered satisfy the, there introduced, $\rcd$ condition for some $K\in\R$. Such restriction was necessary to use some regularization property of the heat flow.

The curvature condition has been dropped in the more recent paper \cite{Ambrosio-Pinamonti-Speight15}. There the authors prove that the above conjecture holds provided either both the base spaces are doubling and support a weak local 1-2 Poincar\'e inequality, or on both the spaces the integral of the local Lipschitz constant squared is a quadratic form on the space of Lipschitz functions.

\bigskip

Our first contribution to the topic is the proof that the above conjecture is always true, provided one of the two spaces is $\R$ or a closed subinterval of $\R$. Our strategy is new and also allows to cover the case of warped product of a space and a closed interval, thus permitting to consider basic geometric constructions like that of cone and spherical suspension of a given space, which are in fact our main concern.

\medskip

The  second main result  of the paper concerns  the Sobolev-to-Lipschitz property (see Section \ref{se:stl} for the definition) of a warped product. Such notion, introduced in \cite{Gigli13} (see also \cite{Gigli13over}), is key to deduce precise metric information from the study of Sobolev functions and it is therefore important  to ask whether warped products have this property.  We will show that this is the case under very general assumptions.

\bigskip

Let us explain the role of this manuscript in relation with the literature on ${\rm RCD}$ spaces. This project has been motivated by the study of the `volume-cone-to-metric-cone' property of ${\rm RCD}$ spaces, obtained in \cite{DPG16}, where the results of this manuscript have been used in a crucial way. The main result in \cite{DPG16} is the proof that, under appropriate assumptions on the volume of concentric balls, the ball $B_R(\bar x)$ on a ${\rm RCD}(0,N)$ space is isomorphic to the cone, call it $C$, built over the sphere $S_R(\bar x)$ equipped with the intrinsic distance induced by the embedding of the sphere on the space and the appropriate measure. The hard part is the proof that $C$ and $B_R(\bar x)$ are isometric and much like in the proof of the non-smooth splitting this is achieved by:
\begin{itemize}
\item[a)] Showing that there is a bijection between the spaces which induces, by right composition, an isometry of the Sobolev spaces $W^{1,2}$
\item[b)] Proving that $C$ has, at least locally, the Sobolev-to-Lipschitz property (for $B_R(\bar x)$ this is already known from \cite{AmbrosioGigliSavare11-2}), so that from the previous point one can conclude. 
\end{itemize}
In order to tackle point $(a)$ one needs first to know the structure of Sobolev functions on the cone $C$ and in particular their relation with those on the sphere $S_R(\bar x)$, because it is this latter space that is directly linked to  $B_R(\bar x)$. Clarifying this relation is our first main result. Obtaining point $(b)$ under a sufficiently general set of assumptions is our second.

Let us underline that \emph{a posteriori}, once the isomorphism between $C$ and $B_R(\bar x)$ has been built, one obtains - via Ketterer's results in \cite{Ketterer13} - that the sphere $S_R(\bar x)$ is a ${\rm RCD}(N-2,N-1)$ space. However, \emph{a priori} very little is known about its structure so that one does not know whether the previous results in \cite{AmbrosioGigliSavare11-2},  \cite{Ambrosio-Pinamonti-Speight15} can be adapted to cover this situation: this is why it is necessary to work with minimal hypothesis on our base space $X$.

\section{Preliminaries}

\subsection{Metric measure spaces}
Let $(X,\d)$ be a  complete metric  space. By a curve  $\gamma$ we shall typically denote  a continuous map $\gamma: [0,1] \mapsto X$, although sometimes curves defined on different intervals will be considered. The space of  curves on $[0,1]$ with values in $X$ is denoted by $C([0,1],X)$.  The space $C([0,1],X)$ equipped with the uniform distance  is a  complete metric space.

We define the length of $\gamma$ by
\[
l[\gamma]:=\sup_\tau \mathop{\sum}_{i=1}^{n} \d(\gamma(t_{i-1}),\gamma(t_i))
\]
where $\tau:=\{0=t_0, t_1, ..., t_n=1\}$ is a partition of  $[0,1]$. The supremum here can be changed to `$\lim$' and the limit  is taken with respect to the refinement ordering of partitions.

The space $(X, \d)$ is said to be a length space if for any $x,y \in X$ we have
\[
\d(x,y) = \inf_\gamma l[\gamma]
\]
where the infimum is taken among all $\gamma \in C([0,1],X)$ which connect $x$ and $y$.

If the infimum is always a minimum, then the space is called geodesic space and we call the minimizers pre-geodesics. A geodesic from $x$ to $y$
is any pre-geodesic which is parametrized by constant speed. Equivalently, a geodesic from $x$ to $y$ is a curve $\gamma$ such that:
\[
\d(\gamma_s,\gamma_t) = |s - t|\d(\gamma_0,\gamma_1),~~~~~~~\forall t, s \in [0,1],~~~\gamma_0 = x,\gamma_1 = y.
\]
The space of all geodesics on X will be denoted by $\geo(X)$. It is a closed subset of $C([0,1],X)$.

Given $p\in [1,+\infty]$ and a curve $\gamma$, we say that $\gamma$ belongs to $AC^p([0,1],X)$ if
\[
\d(\gamma_s,\gamma_t) \leq \int_s^t G(r) \,\d r,~~~~\forall ~t,s \in [0,1], ~s<t
\]
for some $G \in L^p([0,1])$. In particular, the case $p=1$ corresponds to  absolutely continuous curves, whose class is denoted by $AC([0,1], X)$.
It is known (see for instance Theorem 1.1.2 of \cite{AmbrosioGigliSavare08}) that for $\gamma \in  AC([0,1], X)$, there exists an a.e. minimal function $G$ satisfying this inequality, called the metric derivative
which can be computed for a.e. $t\in [0,1]$ as
\[
|\dot{\gamma}_t|:=\lmt{h}{0}\frac{\d(\gamma_{t+h},\gamma_t)}{|h|}.
\]

It is known that (see for example \cite{BBI01}) the length of a curve $\gamma \in  AC([0,1], X)$ can be computed as
\[
l[\gamma]:=\int_0^1 |\dot{\gamma}_t|\,\dt.
\]
In particular, on a length space $X$ we have
\[
\d(x,y) = \inf_\gamma \int_0^1 |\dot{\gamma}_t|\,\dt
\]
where the infimum is taken among all $\gamma \in AC([0,1],X)$ which connect $x$ and $y$.

Given $f : X \mapsto \mathbb{R}$, the local Lipschitz constant $\lip{ f}: X \mapsto [0, \infty]$ is defined as
\[
\lip{f}(x):= \mathop{\overline{\lim}}_{y\rightarrow x}\frac{|f(y) -f(x)|}{\d(x, y)}
\]
if $x$ is not isolated, $0$ otherwise, while the  (global) Lipschitz constant  is defined as
\[
\Lip(f):= \mathop{\sup}_{x \neq y} \frac{|f(y)-f(x)|}{\d(x,y)}.
\]
If $(X, \d)$ is a length space, we have $\Lip(f)=\mathop{\sup}_{x}  \lip{f}(x)$.

We are not only  interested in metric structure, but also in the interaction between metric and measure. For the metric measure space $\ms$, basic assumptions used in this paper are:

\begin{assumption}\label{assumption}
The metric measure space $\ms$ satisfies:
\begin{itemize}
\item $(X,\d)$ is a complete and separable length space,
\item $\mm$ is a non-negative Borel  measure with respect to $\d$ and finite on bounded sets,
\item $\supp{\mm}=X$.
\end{itemize}

\end{assumption}

Moreover, for brevity  we will not distinguish $X, (X,\d)$ or $\ms$ when no ambiguity  exists. For example, we write  $S^2(X)$ instead of $S^2(X,\d, \mm)$ (see the next section).

\subsection{Sobolev functions}

\begin{definition}[Test plan] Let $(X,\d,\mm)$ be a metric measure space and $\pi \in \mathcal{P}(C([0,1],X))$. We say  that $\pi$ has bounded compression provided
there exists $C >0$ such that
\[
(e_t)_\sharp \pi \leq C\mm,~~~\forall t \in [0,1].
\]

Then we say that $\pi$ is a test plan if it has bounded compression, is concentrated on $AC^2([0,1],X)$ and
\[
\int_0^1\int |\dot{\gamma}_t|^2 \,\d\pi(\gamma)\, \dt < +\infty.
\]
\end{definition}

The notion of Sobolev function is given by duality with that of test plan:
\begin{definition}[Sobolev class] Let $\ms$ be a metric measure space. A Borel function $f : X \rightarrow \mathbb{R}$ belongs to the
Sobolev class $S^2\ms$ (resp. $S^2_{loc}\ms$) provided there exists a non-negative function $G\in L^2(X,\mm)$ (resp. $L^2_{loc}(X,\mm)$) such that

\[
\int |f(\gamma_1)- f(\gamma_0)|\, \d\pi(\gamma) \leq \int \int_0^1 G(\gamma_s)|\dot{\gamma}_s|\, \d s\, \d\pi(\gamma),  ~~~\forall ~\text{test plan}~\pi.
\]

In this case, $G$ is called a 2-weak upper gradient of $f$, or simply  weak upper gradient.
\end{definition}
It is known, see e.g.\ \cite{AmbrosioGigliSavare11}, that there exists a minimal function $G$ in the $\mm$-a.e. sense among all the weak upper gradients of $f$. We denote such minimal function
by $|\D f|$ or $|\D f|_X$ to emphasize which  space we are considering and call it  minimal weak upper gradient. Notice that if $f$ is Lipschitz, then $|\D f|\leq \lip f$ $\mm$-a.e., because $\lip f$ is a weak upper gradient of $f$.

It is known that the locality holds for $|\D f|$, i.e. $|\D f|=|\D g|$ a.e. on the set $\{ f=g\}$, moreover $S^2_{loc}\ms$ is a vector space and the inequality
\begin{equation}
\label{eq:sumd}
|\D(\alpha f+\beta g)|\leq |\alpha||\D f|+|\beta||\D g|,\qquad\mm-a.e.,
\end{equation}
holds for every $f,g\in S^2_{loc}\ms$ and $\alpha,\beta\in \R$ and the space $S^2_{loc}\cap L^\infty_{loc}\ms$ is an algebra, with the inequality
\begin{equation}
\label{eq:leibn}
|\D(fg)|\leq |f||\D g|+|g||\D f|,\qquad\mm-a.e.,
\end{equation}
being valid for any $f,g\in S^2_{loc}\cap L^\infty_{loc}\ms$.

Another basic - and easy to check - property of minimal weak upper gradients that we shall frequently use is their semicontinuity in the following sense: if $(f_n)\subset S^2\ms$ is a sequence $\mm$-a.e.\ converging to some $f$ and such that $(|\D f_n|)$ is bounded in $L^2(X,\mm)$, then $f\in S^2\ms$ and 
\[
|\D f|\leq G,\qquad\mm-a.e.,
\]
for every $L^2$-weak limit $G$ of some subsequence of $(|\D f_n|)$ (see \cite{AmbrosioGigliSavare11}).

Then the Sobolev space $W^{1,2}\ms$ is defined as $W^{1,2}\ms:= S^2\ms  \cap L^2(X,\mm)$ and is  endowed with the norm
\[
\|f\|^2_{W^{1,2}\ms}:=\|f\|^2_{L^2(X,\mm)}+\||\D f|\|^2_{L^2(X,\mm)}.
\]
$W^{1,2}(X)$ is always a Banach space, but in general it is not an Hilbert space. Following \cite{Gigli12}, we say that $\ms$ is an infinitesimally Hilbertian space if $W^{1,2}(X)$ is an Hilbert space.

In \cite{AmbrosioGigliSavare11} (see also \cite{AmbrosioGigliSavare11-3})  the following result has been proved.

\begin{proposition}[Density in energy of Lipschitz functions]\label{prop-density}
Let $\ms$ be a metric measure space and $f\in W^{1,2}(X)$. Then there exists a sequence $(f_n)$ of Lipschitz functions $L^2$-converging to $f$ such that the sequence $(\lip{f_n})$ $L^2$-converges to $|\D f|$.
\end{proposition}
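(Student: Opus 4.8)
The plan is to reduce the statement to the identification of two \emph{a priori} different relaxations of the Dirichlet energy and then to use the Hilbertian structure of $L^2$ to upgrade the convergence. For $f\in L^2(X,\mm)$ set
\[
\mathrm{Ch}_*(f):=\inf\Big\{\liminf_n \tfrac12\!\int \lip{f_n}^2\,\d\mm\ :\ f_n\ \text{Lipschitz},\ f_n\to f\ \text{in}\ L^2(X,\mm)\Big\},
\]
and, when $\mathrm{Ch}_*(f)<\infty$, call \emph{minimal relaxed gradient} $|\D f|_*$ the element of least $L^2$-norm among the weak limits in $L^2$ of $\lip{f_n}$ along recovery sequences, so that $\mathrm{Ch}_*(f)=\tfrac12\int|\D f|_*^2\,\d\mm$; by a Mazur-lemma argument there is a recovery sequence with $\lip{f_n}\to|\D f|_*$ \emph{strongly} in $L^2$. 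It then suffices to show $|\D f|_*=|\D f|$ $\mm$-a.e.\ for every $f\in W^{1,2}(X)$: given such a recovery sequence one has $f_n\to f$ in $L^2$ and $\lip{f_n}\to|\D f|$ in $L^2$, which is the assertion. (Alternatively, pick any $f_n\to f$ in $L^2$ with $\|\lip{f_n}\|_{L^2}^2\to 2\,\mathrm{Ch}_*(f)$ and combine weak convergence with convergence of norms in the Hilbert space $L^2$.)

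One of the two inequalities between $\mathrm{Ch}_*$ and $\mathrm{Ch}(f):=\tfrac12\int|\D f|^2\,\d\mm$ is immediate. Since $\lip g$ is an upper gradient of any Lipschitz $g$, hence a $2$-weak upper gradient, one has $|\D g|\le\lip g$ $\mm$-a.e.; applying the semicontinuity of minimal weak upper gradients recalled above to a recovery sequence for $\mathrm{Ch}_*(f)$ gives $f\in S^2(X)$ and $|\D f|\le|\D f|_*$ $\mm$-a.e., i.e.\ $\mathrm{Ch}(f)\le\mathrm{Ch}_*(f)$.

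The heart of the proof is the reverse inequality $\mathrm{Ch}_*(f)\le\mathrm{Ch}(f)$ for $f\in W^{1,2}(X)$, and the natural tool is the heat flow. Both $\mathrm{Ch}$ and $\mathrm{Ch}_*$ are convex on $L^2(X,\mm)$ — $\mathrm{Ch}$ directly from \eqref{eq:sumd}, $\mathrm{Ch}_*$ from \eqref{eq:sumd} applied to Lipschitz functions after passing to the $\liminf$ — and both are $L^2$-lower semicontinuous ($\mathrm{Ch}_*$ by construction, $\mathrm{Ch}$ by the semicontinuity of minimal weak upper gradients). Hence each generates an $L^2$-gradient flow, $t\mapsto h_t f$ and $t\mapsto\tilde h_t f$. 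The energy dissipation identity along such a flow yields $\tfrac12\|f\|_{L^2}^2-\tfrac12\|h_t f\|_{L^2}^2=2\int_0^t\mathrm{Ch}_*(h_s f)\,\d s$, and, $s\mapsto\mathrm{Ch}_*(h_s f)$ being non-increasing, $\mathrm{Ch}_*(f)=\lim_{t\downarrow 0}\tfrac1{4t}\big(\|f\|_{L^2}^2-\|h_t f\|_{L^2}^2\big)$ in $[0,+\infty]$, together with the analogous formula for $\mathrm{Ch}$ and $\tilde h$. Thus it is enough to prove $h_t=\tilde h_t$, and this is obtained by identifying each flow with the gradient flow of the relative entropy $\mathrm{Ent}_\mm$ in $(\pr,\ws)$: by means of the Hopf--Lax semigroup $Q_s g(x):=\inf_y\big(g(y)+\tfrac{\d^2(x,y)}{2s}\big)$, its Hamilton--Jacobi subsolution inequality $\tfrac{\d}{\d s}Q_s g+\tfrac12\lip{Q_s g}^2\le 0$, and Kantorovich duality for $\ws$, one proves the Kuwada estimate — for a probability density $\rho$ the curve $t\mapsto\mu_t:=(h_t\rho)\,\mm$ is $\ws$-absolutely continuous with $|\dot\mu_t|^2\le\int|\D h_t\rho|_*^2/(h_t\rho)\,\d\mm$, and likewise for $\tilde h$ with $|\D\cdot|$ in place of $|\D\cdot|_*$ — and combines it with the entropy-dissipation rate $\tfrac{\d}{\d t}\mathrm{Ent}_\mm\big((h_t\rho)\mm\big)=-\int|\D h_t\rho|_*^2/(h_t\rho)\,\d\mm$ to see that $t\mapsto(h_t\rho)\mm$ and $t\mapsto(\tilde h_t\rho)\mm$ are curves of maximal slope for $\mathrm{Ent}_\mm$; a comparison of these characterizations, together with uniqueness of the $L^2$-gradient flows of the convex functionals $\mathrm{Ch}_*$ and $\mathrm{Ch}$, forces $h_t\rho=\tilde h_t\rho$, whence $h_t=\tilde h_t$ on all of $L^2$ by density. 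Inserting this into the two energy-dissipation formulas gives $\mathrm{Ch}_*(f)=\mathrm{Ch}(f)$, hence $|\D f|_*=|\D f|$ $\mm$-a.e.

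I expect the main obstacle to be exactly this identification of the two heat flows — the Kuwada estimate and the matching with the metric gradient flow of $\mathrm{Ent}_\mm$. No curvature lower bound is assumed, so neither contraction nor $\lambda$-convexity of the entropy is available, and uniqueness of the limiting curve must be extracted from the Hilbertian $L^2$-gradient-flow structure, carefully synchronized with the Hamilton--Jacobi inequality for $Q_s$; the surrounding bookkeeping (truncation to reduce to bounded $f$, approximation to pass between nonnegative densities and general $L^2$ functions, and density of bounded Lipschitz functions of bounded support) is routine but must be handled with some care.
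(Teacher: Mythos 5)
The paper does not actually prove this proposition: it is imported verbatim from \cite{AmbrosioGigliSavare11} (see also \cite{AmbrosioGigliSavare11-3}), so the only meaningful comparison is with the cited proofs. Your outline correctly reconstructs the architecture of the proof in \cite{AmbrosioGigliSavare11}: the relaxed energy $\mathrm{Ch}_*$, the strong recovery sequence via Mazur's lemma, the reduction of the Proposition to the $\mm$-a.e.\ identity $|\D f|_*=|\D f|$, the easy inequality $|\D f|\le|\D f|_*$ from $|\D g|\le\lip g$ plus semicontinuity of minimal weak upper gradients, and the heat-flow/optimal-transport machinery (Hopf--Lax subsolution of Hamilton--Jacobi, Kantorovich duality, Kuwada's lemma, entropy dissipation) for the converse. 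The $2$-homogeneity of $\mathrm{Ch}$ and $\mathrm{Ch}_*$, which you use implicitly in the dissipation identity $\tfrac12\|f\|^2-\tfrac12\|h_tf\|^2=2\int_0^t\mathrm{Ch}_*(h_sf)\,\d s$, is also the right reason that identity holds.

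The step that does not work as you state it is the conclusion ``both $t\mapsto (h_t\rho)\mm$ and $t\mapsto(\tilde h_t\rho)\mm$ are curves of maximal slope for $\mathrm{Ent}_\mm$, hence they coincide.'' Without a curvature lower bound the entropy is not $\lambda$-convex along $\mathcal W_2$-geodesics, and curves of maximal slope for it are not known to be unique; exhibiting two such curves with the same initial datum therefore does not force them to be equal, and the uniqueness cannot be ``extracted from the Hilbertian structure'' either, since the two $L^2$-flows are generated by two a priori different functionals. The argument of \cite{AmbrosioGigliSavare11} never introduces the second flow: one runs only the gradient flow $f_t=h_tf$ of the (manifestly l.s.c.) functional $\mathrm{Ch}_*$, computes the entropy dissipation along it exactly as $\int_0^T\!\int |\D f_t|_*^2/f_t\,\d\mm\,\d t$, bounds the same quantity from above by $\tfrac12\int_0^T|\dot\mu_t|^2\,\d t+\tfrac12\int_0^T\!\int|\D f_t|^2/f_t\,\d\mm\,\d t$ --- the first summand controlled by Kuwada's lemma, the second produced by a test-plan argument using only the \emph{weak} upper gradient --- and squeezes, using $|\D f_t|\le|\D f_t|_*$, to get $|\D f_t|_*=|\D f_t|$ for a.e.\ $t>0$; a truncation and limiting step then transfers this to $t=0$. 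With that replacement (or with the purely variational Hopf--Lax duality proof of \cite{AmbrosioGigliSavare11-3}, which dispenses with the heat flow entirely), your outline becomes the standard proof.
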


\subsection{Product spaces}
In this subsection we recall the basic concepts and results about the Cartesian product  and the warped product of two spaces.  Both metric and metric measure structures are considered.

Given two metric measure spaces $(X, \d_X, \mm_X)$ and $(Y, \d_Y, \mm_Y)$, we define their (Cartesian) product as:

\begin{definition}[Cartesian product]
We define the space $(Y \times X, \d_c, \mm_c)$ as the product space $Y\times X$ equipped with the distance $\d_c:=\d_Y \times \d_X$ and the measure $\mm_c:=\mm_Y \times \mm_X$. Here $\d_c=\d_Y \times \d_X$ means:
\[
\d_c ((y_1,x_1), (y_2,x_2))= \sqrt{\d_Y^2(y_1, y_2)+ \d_X^2(x_1, x_2)},
\]
 for any pairs $(y_1, x_1), (y_2, x_2) \in Y \times X$.
\end{definition}
We shall make use of the following simple result, established in \cite{Gigli13}, linking Sobolev functions on the base spaces with those in the product:
\begin{proposition}\label{prop:linkprod}
Let  $g\in L^2_{loc}(X)$ and define $f\in L^2_{loc}( Y\times X)$ as $f(y,x):=g(x)$.

Then $f\in S^2_{loc}(Y\times X)$ if and only if $g\in S^2_{loc}(X)$ and in this case the identity
\[
|\D f|_{X_c}(y,x)=|\D g|_X(x),
\]
holds for $\mm_c$-a.e.\ $(t,x)$.
\end{proposition}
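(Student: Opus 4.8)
The plan is to prove the two implications separately --- each time extracting the corresponding bound between minimal weak upper gradients --- and then to combine them to get the asserted identity. I will write $\mathrm{pr}_X\colon Y\times X\to X$ and $\mathrm{pr}_Y\colon Y\times X\to Y$ for the two projections, and I will use throughout two elementary metric facts coming from $\d_c^2=\d_Y^2+\d_X^2$: any $\gamma\in AC^2([0,1],Y\times X)$, written $\gamma=(\gamma^Y,\gamma^X)$, has both components in $AC^2$ with $|\dot\gamma^X_t|\le|\dot\gamma_t|$ for a.e.\ $t$; and, conversely, for each fixed $y\in Y$ and $\eta\in AC^2([0,1],X)$ the lifted curve $t\mapsto(y,\eta_t)$ belongs to $AC^2([0,1],Y\times X)$ and has metric derivative $|\dot\eta_t|$.

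For the implication $g\in S^2_{loc}(X)\Rightarrow f\in S^2_{loc}(Y\times X)$ I would argue as follows. Let $\pi$ be a test plan on $Y\times X$. Since every continuous curve has bounded image and the defining inequality of $S^2_{loc}$ is additive in $\pi$, I may reduce to the case where $\pi$ is concentrated on curves contained in a fixed bounded set $K$. Then the push-forward $\hat\pi$ of $\pi$ under $\gamma\mapsto\mathrm{pr}_X\circ\gamma$ is a test plan on $X$: it is concentrated on $AC^2([0,1],X)$ and has finite kinetic energy because $|\dot\gamma^X|\le|\dot\gamma|$, and it has bounded compression because $(e_t)_\sharp\pi\le C\,\mm_c$ lives on $K$ and $\mm_Y(\mathrm{pr}_Y(K))<\infty$. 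Plugging $\hat\pi$ into the $S^2_{loc}$-inequality for $g$ and using $f=g\circ\mathrm{pr}_X$ together with $|\dot\gamma^X_t|\le|\dot\gamma_t|$ then shows that $G(y,x):=|\D g|_X(x)$ is a $2$-weak upper gradient of $f$; it is in $L^2_{loc}(Y\times X)$ by Fubini. Hence $f\in S^2_{loc}(Y\times X)$ and $|\D f|_{X_c}(y,x)\le|\D g|_X(x)$ for $\mm_c$-a.e.\ $(y,x)$.

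For the converse, assume $f\in S^2_{loc}(Y\times X)$, fix a bounded Borel set $B\subseteq Y$ of positive finite $\mm_Y$-measure, and set $\nu_B:=\mm_Y|_B/\mm_Y(B)$. Given an arbitrary test plan $\sigma$ on $X$, I would let $\pi_B$ be the law of the random curve $t\mapsto(y,\eta_t)$ with $y$ distributed as $\nu_B$ and $\eta$ as $\sigma$, independently; by the second elementary fact this is a test plan on $Y\times X$ (bounded compression because $\nu_B\le\mm_Y/\mm_Y(B)$, finite energy because the lift has metric derivative $|\dot\eta|$). Feeding $\pi_B$ into the $S^2_{loc}$-inequality for $f$ and using $f(y,\eta_t)=g(\eta_t)$ and Fubini shows that
\[
G_B(x):=\frac{1}{\mm_Y(B)}\int_B|\D f|_{X_c}(y,x)\,\d\mm_Y(y)
\]
is a $2$-weak upper gradient of $g$; moreover $G_B\in L^2_{loc}(X)$, since for bounded $\Omega\subseteq X$ Jensen's inequality gives $\int_\Omega G_B^2\,\d\mm_X\le\mm_Y(B)^{-1}\int_{B\times\Omega}|\D f|_{X_c}^2\,\d\mm_c<\infty$, the set $B\times\Omega$ being bounded in $Y\times X$. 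Thus $g\in S^2_{loc}(X)$ and $|\D g|_X\le G_B$ $\mm_X$-a.e.

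To finish, I would integrate the inequality $|\D f|_{X_c}(y,x)\le|\D g|_X(x)$ from the first step over $y\in B$ against $\nu_B$, obtaining $G_B\le|\D g|_X$ $\mm_X$-a.e., hence $G_B=|\D g|_X$ $\mm_X$-a.e.; consequently, for $\mm_X$-a.e.\ $x$, the nonnegative function $y\mapsto|\D g|_X(x)-|\D f|_{X_c}(y,x)$ has zero average over $B$ and therefore vanishes for $\mm_Y$-a.e.\ $y\in B$. Running $B$ through a countable family of bounded balls covering $Y$ and using Fubini again yields $|\D f|_{X_c}(y,x)=|\D g|_X(x)$ for $\mm_c$-a.e.\ $(y,x)$. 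I expect the main obstacle to be purely measure-theoretic: getting the two localisation reductions right so that everything works also when $\mm_Y(Y)=\infty$ (splitting $\pi$ in the first step, restricting to bounded $B$ in the second), and carefully passing from the integral identity $G_B=|\D g|_X$ to the pointwise equality of the minimal weak upper gradients.
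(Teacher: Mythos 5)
Your proof is correct: the projection of test plans (after the legitimate reduction to plans concentrated on curves in a bounded set) gives $|\D f|_{X_c}\le |\D g|_X$, the lift of test plans against a normalized $\mm_Y\restr{B}$ gives the converse inclusion with $|\D g|_X\le G_B$, and the averaging/Fubini step correctly upgrades these to the $\mm_c$-a.e.\ identity. The paper itself offers no proof of this proposition --- it is imported from \cite{Gigli13} --- and the argument there is exactly this project/lift duality of test plans, so your route coincides with the intended one.
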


For warped products the construction is slightly more complicated. The warped product metric is defined for $X,Y$ length spaces only and in order to introduce it we need first to discuss the corresponding notion of length:
\begin{definition}[Warped length of curves]\label{def-warped-1}
Let $(X,\d_X)$ and $(Y,\d_Y)$ be two length   spaces and $w_\d:Y\to[0,+\infty)$  a continuous function. Let  $\gamma=(\gamma^Y,\gamma^X)$ be a curve such that $\gamma^Y,\gamma^X$ are absolutely continuous.
Then the $w_\d$-length of $\gamma$ is defined as
\[
l_w[\gamma]:=\lim_\tau \mathop{\sum}_{i=1}^{n} \sqrt{\d^2_Y(\gamma^Y_{t_{i-1}},\gamma^Y_{t_i})+w_\d^2(\gamma^Y_{t_{i-1}})\d^2_X(\gamma^X_{t_{i-1}},\gamma^X_{t_i})},
\]
where $\tau:=\{0=t_0, t_1, ..., t_n=1\}$ is a partition of  $I=[0,1]$ and the limit is taken with respect to the refinement ordering of partitions.
\end{definition}
It is not hard to check that the limit exists and that the formula
\[
l_w[\gamma]=\int_0^1 \sqrt{|\dot{\gamma}^Y_t|^2+w^2_\d(\gamma^Y_t)|\dot{\gamma}^X_t|^2} \,\d t
\]
holds.

Then we can define the metric $\d_w$ using this length structure:
\begin{definition}[Warped product of metric spaces]\label{def-warped-2} Let $(X,\d_X)$ and $(Y,\d_Y)$ be two length   spaces and $w_\d:Y\to[0,\infty)$  a continuous function. Define a pseudo-metric $\d_w$ on the space $Y\times X$ by
\[
\d_w(p,q):=\inf\{l_w[\gamma]:\gamma=(\gamma^Y,\gamma^X) ~\text{with $\gamma^Y,\gamma^X$  absolutely continuous and }\gamma_0=p,\ \gamma_1=q \},
\]
for any  $p,q \in Y\times X$.
\end{definition}
The pseudo metric $\d_w$ induces an equivalence relation on $Y\times X$ by $(y,x)\sim (y',x')$ iff $\d_w((y,x),(y',x'))=0$ and then a metric on the quotient. With a common slight abuse of notation we shall denote the completion of such quotient by $(Y\times_wX,\d_w)$. It is clear that if $X,Y$ are separable, then so is $Y\times_w X$. Let us denote by $\pi:Y\times X\to Y\times_wX$ the quotient map, then we can give the following definition:
\begin{definition}[Warped product of metric measure spaces]\label{def-warped-3} Let $(X,\d_X,\mm_X)$ and $(Y,\d_Y,\mm_Y)$ be two complete separable and length metric spaces equipped with non-negative Radon measures. Assume also that $\mm_X$ is a finite measure and let $w_\d,w_\mm:Y\to[0,+\infty)$ be continuous functions. 

Then the warped product $(Y\times_wX,\d_w)$ is defined as above and the Radon measure $\mm_w$ is defined as
\[
\mm_w:=\pi_*\big((w_\mm\mm_Y)\times\mm_X\big)
\]
\end{definition}
The assumption that $\mm_X$ is finite is needed to ensure that $\mm_w$, which is  always a Borel measure, is actually Radon. Indeed, observe that the trivial inequality
\[
\d_w\big((y,x),(y',x')\big)\geq \d_Y(y,y')
\]
grants that the projection map $\pi^Y:Y\times X\to Y$ passes to the quotient and induces a 1-Lipschitz map, still denoted by $\pi^Y$, from $Y\times_wX$ to $Y$. Then for $p\in Y\times_wX$ we can find a neighbourhood $U$ of $\pi^Y(p)$ in $Y$ such that $\mm_Y(U)<\infty$. It is then clear that $\pi(U\times X)$ is a neighbourhood of $p$ in $Y\times_wX$ of finite mass, thus proving the claim. If $\mm_X$ is not finite, it is still true that $\mm_w$ is a Radon measure, provided $w_\mm$ is never 0, but in applications to geometry it is often the case  that $w_\mm$ is 0 in at least one point, so that we shall always assume that $\mm_X$ is finite, even if all our results only require $\mm_w$ to be Radon.

Finally a word on notation. With a slight abuse, we shall denote the typical element of $Y\times_wX$ by $(y,x)$. This is not really harmful since the complement of $\pi(Y\times X)$ in $Y\times_wX$ is $\mm_w$-negligible and in writing a function on $Y\times_wX$ as a function on $Y\times X$ it will be implicitly understood that such function passes to the quotient.

\section{The results}
\subsection{Cartesian product}
Throughout this section $(X,\d,\mm)$ is a fixed complete, separable and length space and $I\subset\R$ a closed, possibly unbounded, interval. We are interested in studying the Cartesian product $(X_c,\d_c,\mm_c)$ of $I$, endowed with its Euclidean structure, and $(X,\d,\mm)$.

Given a function $f: X_c\to\R$ and $x\in X$ we denote by $f^{(x)}:I\to\R$ the function given by $f^{(x)}(t):=f(t,x)$. Similarly, for $t\in I$ we denote by $f^{(t)}:X\to \R$ the function given by $f^{(t)}(x):=f(t,x)$.

We  start introducing the Beppo Levi space  $\BL( X_c)$:
\begin{definition}[The space $\BL( X_c)$]
The space $\BL( X_c)\subset L^2( X_c,\mm_c )$ is the space of functions $f\in L^2( X_c,\mm_c)$ such that
\begin{itemize}
\item[i)] $f^{(x)}\in W^{1,2}(I)$ for $\mm$-a.e. $x$,
\item[ii)]$f^{(t)}\in W^{1,2}(X)$ for $\mathcal L^1$-a.e. $t$
\item[iii)] the function
\[
|\D f|_c(t,x):=\sqrt{|\D f^{(t)}|_X^2(x)+|\D f^{(x)}|_I^2(t)},
\]
belongs to $L^2( X_c,\mm_c)$.
\end{itemize}
On $\BL(X_c)$ we put the norm
\[
\|f\|_{\BL(X_c)}^2:=\| f\|_{L^2(X_c)}^2+\||\D f|_c\|^2_{L^2(X_c)}.
\]
The space $\BL_{loc}( X_c)$ is the subset of $L^2_{loc}( X_c,\mm_c)$ of functions which are locally equal to some function in $\BL( X_c)$.
\end{definition}

The main result of this section is the identification of the spaces $W^{1,2}(X_c)$ and $\BL(X_c)$ and of their corresponding weak gradients $|\D f|_{X_c}$ and $|\D f|_c$. One inclusion has been proved in \cite{AmbrosioGigliSavare11-2}, notice that although in \cite{AmbrosioGigliSavare11-2} a lower Ricci curvature bound is often present, the following result is stated for arbitrary $(X,\d,\mm)$ as above:
\begin{proposition} [Proposition 6.18 of \cite{AmbrosioGigliSavare11-2}]\label{ineq-car-1}
We have $ W^{1,2}( X_c)\subset\BL( X_c)$ and
\begin{equation}
\label{eq:cleqw}
\int_{ X_c}|\D f|_c^2\,\d\mm_c\leq \int_{  X_c} |\D f|^2_{ X_c}\,\d\mm_c,\qquad\forall f\in W^{1,2}( X_c).
\end{equation}
\end{proposition}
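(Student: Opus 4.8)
The plan is to reduce the whole statement to the single pointwise estimate
\begin{equation}\label{eq:car-sharp}
|\D f^{(t)}|_X^2(x)+|\D f^{(x)}|_I^2(t)\ \le\ |\D f|_{X_c}^2(t,x)\qquad\text{for }\mm_c\text{-a.e. }(t,x),
\end{equation}
valid for every $f\in W^{1,2}(X_c)$: once \eqref{eq:car-sharp} is known, conditions (i)--(iii) in the definition of $\BL(X_c)$ follow at once (with $|\D f|_c\le|\D f|_{X_c}$ pointwise, in particular $|\D f|_c\in L^2(X_c)$), and integrating \eqref{eq:car-sharp} over $X_c$ gives precisely \eqref{eq:cleqw}. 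The only tool is the refinement of the definition of $S^2$ by restriction to curves (see \cite{AmbrosioGigliSavare11}), which we use on $X_c$, on $X$ and on $I$: if $\pi$ is a test plan on the space in question and $g$ a function in its Sobolev class, then for $\pi$-a.e.\ $\gamma$ one has $g\circ\gamma\in W^{1,1}(0,1)$ with $|(g\circ\gamma)'(s)|\le|\D g|(\gamma_s)\,|\dot\gamma_s|$ for a.e.\ $s$. The proof consists in feeding well-chosen test plans into this inequality.

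\emph{Step 1: the slices are Sobolev, with a crude bound.} First I would show that $f^{(x)}\in W^{1,2}(I)$ with $|\D f^{(x)}|_I\le|\D f|_{X_c}(\cdot,x)$ for $\mm$-a.e.\ $x$, and that $f^{(t)}\in W^{1,2}(X)$ for $\mathcal L^1$-a.e.\ $t$. For the $I$-slices I would fix the countable family of ``affine'' test plans on $I$ --- laws of curves $s\mapsto a+s\delta$, with $a$ uniform on a compact subinterval and $\delta$ small, all parameters rational --- whose curves cover $I$ by short intervals; multiplying such a plan by a probability measure $\mu$ on $X$ with $\mu\le C\mm$ gives a test plan on $X_c$ concentrated on curves moving only in the $I$-factor, and the restriction inequality applied to it shows that $f^{(x)}$ is locally $W^{1,1}$ along $I$ with $|(f^{(x)})'|\le|\D f|_{X_c}(\cdot,x)$; since $f^{(x)}$ and $|\D f|_{X_c}(\cdot,x)$ lie in $L^2(I)$ for $\mm$-a.e.\ $x$ by Fubini, this yields $f^{(x)}\in W^{1,2}(I)$ with the stated bound (letting $\mu$ range over a countable family of full support, so that the exceptional sets combine). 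For the $X$-slices it is cleaner to use Proposition \ref{prop-density}: pick Lipschitz $f_n\to f$ in $L^2(X_c)$ with $\lip{f_n}\to|\D f|_{X_c}$ in $L^2(X_c)$; along a subsequence, for $\mathcal L^1$-a.e.\ $t$ one has $f_n^{(t)}\to f^{(t)}$ in $L^2(X)$ and $\int_X\lip{f_n}^2(t,\cdot)\,\d\mm\to\int_X|\D f|_{X_c}^2(t,\cdot)\,\d\mm$, and since $\lip{f_n^{(t)}}\le\lip{f_n}(t,\cdot)$ the sequence $\lip{f_n^{(t)}}$ is bounded in $L^2(X)$, so by the semicontinuity of minimal weak upper gradients $f^{(t)}\in W^{1,2}(X)$ with $\int_X|\D f^{(t)}|_X^2\,\d\mm\le\int_X|\D f|_{X_c}^2(t,\cdot)\,\d\mm$. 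Integrating the two bounds gives the crude estimate $\int_{X_c}|\D f|_c^2\,\d\mm_c\le 2\int_{X_c}|\D f|_{X_c}^2\,\d\mm_c$, so (i)--(iii) hold and it only remains to remove the factor $2$, i.e.\ to prove \eqref{eq:car-sharp}.

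\emph{Step 2: the sharp estimate.} It is useful to observe that \eqref{eq:car-sharp} is equivalent to the countable family of inequalities
\[
\alpha\,|\D f^{(x)}|_I(t)+\beta\,|\D f^{(t)}|_X(x)\ \le\ |\D f|_{X_c}(t,x)\qquad\mm_c\text{-a.e.},
\]
one for each pair of rationals $\alpha,\beta\ge 0$ with $\alpha^2+\beta^2\le 1$: indeed, decomposing $X_c$ into the Borel sets on which the vector $\big(|\D f^{(x)}|_I(t),|\D f^{(t)}|_X(x)\big)$ points, up to a small angular error, in a prescribed rational direction, each such inequality with $(\alpha,\beta)$ close to that direction recovers \eqref{eq:car-sharp} on that set in the limit. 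Fix $\alpha,\beta$. To prove the corresponding inequality I would use a ``diagonal'' test plan on $X_c$: given a test plan $\sigma$ on $X$ and an affine test plan on $I$ as in Step 1, take the law $\pi$ of $s\mapsto(\eta_s,\zeta_s)$, with $(\eta,\zeta)$ the independent coupling of the two and with the $I$-speed rescaled so that the speeds of $\eta$ and $\zeta$ stand in the ratio $\alpha:\beta$. The restriction inequality then reads $|(f\circ\gamma)'(s)|\le|\D f|_{X_c}(\eta_s,\zeta_s)\,\sqrt{\alpha^2+\beta^2}\,v(s)$, with $v(s)$ the common speed, while a chain rule along $\gamma$ should give, for $\pi$-a.e.\ $\gamma$ and a.e.\ $s$,
\[
(f\circ\gamma)'(s)=\frac{\d}{\d s}\big[f^{(\zeta_s)}(\eta_{\cdot})\big](s)+\frac{\d}{\d s}\big[f^{(\eta_s)}(\zeta_{\cdot})\big](s),
\]
the first summand being $\pm\,\alpha\,|\D f^{(\zeta_s)}|_I(\eta_s)\,v(s)$ (a genuine scalar derivative, since $I\subset\R$) and the second having modulus at most $\beta\,|\D f^{(\eta_s)}|_X(\zeta_s)\,v(s)$. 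Choosing $\sigma$ to be a plan representing the gradient of the relevant $X$-slice (so that the second summand equals $\pm\,\beta\,|\D f^{(\eta_s)}|_X(\zeta_s)\,v(s)$) and localizing to the region where the two signs agree, the two summands add; dividing by $\sqrt{\alpha^2+\beta^2}\,v(s)$ and letting the a.e.\ conclusions range over $X_c$ gives the desired inequality, hence \eqref{eq:car-sharp}, hence \eqref{eq:cleqw}.

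\emph{The main obstacle.} The serious difficulty is Step 2: justifying the chain rule in the a.e.\ sense, controlling uniformly the exceptional sets (which depend on the affine plans, on $\sigma$, and on $\alpha,\beta$), and --- above all --- arranging that the two summands of $(f\circ\gamma)'$ genuinely add instead of partially cancelling. It is precisely here that one exploits both the one-dimensionality of $I$, which turns the $I$-summand into a scalar derivative carrying an unambiguous sign, and the finer parts of the test-plan calculus: existence of plans representing gradients on $X$, locality of $|\D\cdot|$, and Fubini-type measurability of the family $t\mapsto f^{(t)}$. By contrast, Step 1 and the combinatorial reduction opening Step 2 are routine.
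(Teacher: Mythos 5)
First, a point of order: the paper does not actually prove this proposition --- it is imported verbatim as Proposition 6.18 of \cite{AmbrosioGigliSavare11-2}, so there is no internal proof to compare yours against, and I can only assess your argument on its own terms.

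Your Step 1 is sound: test plans moving only in the $I$-factor give $f^{(x)}\in W^{1,2}(I)$ with $|\D f^{(x)}|_I\le |\D f|_{X_c}(\cdot,x)$ for $\mm$-a.e.\ $x$, the Lipschitz-approximation and semicontinuity argument gives $\int_X|\D f^{(t)}|_X^2\,\d\mm\le\int_X|\D f|^2_{X_c}(t,\cdot)\,\d\mm$ for a.e.\ $t$, and together these yield the inclusion $W^{1,2}(X_c)\subset\BL(X_c)$ and inequality \eqref{eq:cleqw} \emph{with an extra factor $2$}. The reduction of the sharp pointwise bound to the countable family of linear inequalities $\alpha|\D f^{(x)}|_I(t)+\beta|\D f^{(t)}|_X(x)\le|\D f|_{X_c}(t,x)$ is also correct. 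The genuine gap is everything after that, and you have in effect named it yourself. The ``chain rule'' $(f\circ\gamma)'(s)=\tfrac{\d}{\d s}\bigl[f^{(\zeta_s)}(\eta_\cdot)\bigr](s)+\tfrac{\d}{\d s}\bigl[f^{(\eta_s)}(\zeta_\cdot)\bigr](s)$ along a diagonal curve is not a consequence of the test-plan calculus for a general $f\in W^{1,2}(X_c)$: proving that the derivative along a diagonal curve splits into partial contributions which add with coherent signs is essentially the entire content of the proposition, and more broadly of the tensorization problem the paper describes as still open in general. Two specific devices do not survive scrutiny as stated. A single plan $\sigma$ on $X$ cannot ``represent the gradient of the relevant $X$-slice,'' because the slice $f^{(\eta_s)}$ changes with $s$, so there is no fixed Sobolev function on $X$ whose gradient $\sigma$ is supposed to represent. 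And ``localizing to the region where the two signs agree'' is a pointwise operation on $X_c$, whereas the objects you integrate against are curves, each of which travels through regions of both signs; one cannot restrict a test plan to the good region without destroying the curve structure, and reversing the $I$-component of a curve to fix the sign at one instant spoils it at others. Since the portion of your argument that is actually carried out only yields \eqref{eq:cleqw} with constant $2$, the proposal does not prove the statement: what is missing is precisely the quantitative sign-selection/first-order-expansion work done in \cite{AmbrosioGigliSavare11-2}, where the one-dimensionality of one factor is exploited so that the ``representing plan'' in that direction degenerates to a measurable choice of sign for the genuine partial derivative $\partial_t f$.
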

The key to proving the other inclusion is in the following purely metric lemma:
\begin{lemma}\label{lemma-leq}
Let  $f:X_c\to\R$ be of the form $f(t,x)=g_1(x)+ h(t)g_2(x)$ for Lipschitz functions  $g_1, g_2,h$. Then
\[
\lip {f}^2(t,x)\leq\loclip{f^{(t)}}{X}^2(x)+\loclip{f^{(x)}}{I}^2(t)
\]
for every $(t,x)\in  X_c$.
\end{lemma}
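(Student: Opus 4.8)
The plan is to compute the local Lipschitz constant of $f$ at a point $(t,x)$ by testing it against nearby points and splitting the displacement into its "horizontal" ($X$) and "vertical" ($I$) parts. Fix $(t,x)\in X_c$ and take a sequence $(t_n,x_n)\to(t,x)$ realizing the $\limsup$ in the definition of $\lip f(t,x)$; we may assume $(t_n,x_n)\neq(t,x)$. The idea is to interpolate through the intermediate point $(t,x_n)$, writing
\[
f(t_n,x_n)-f(t,x)=\big(f(t_n,x_n)-f(t,x_n)\big)+\big(f(t,x_n)-f(t,x)\big)=\big(f^{(x_n)}(t_n)-f^{(x_n)}(t)\big)+\big(f^{(t)}(x_n)-f^{(t)}(x)\big).
\]
The second term is controlled by $\loclip{f^{(t)}}{X}(x)\,\d_X(x_n,x)+o(\d_X(x_n,x))$ directly from the definition of the local Lipschitz constant of the slice $f^{(t)}$ at $x$. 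For the first term one would like to say it is at most $\loclip{f^{(x_n)}}{I}(t)\,|t_n-t|+o(|t_n-t|)$, but the trouble is that the slice $f^{(x_n)}$ changes with $n$, so the error term is not obviously uniform.

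This is exactly where the special form $f(t,x)=g_1(x)+h(t)g_2(x)$ enters, and handling the first term uniformly is the main obstacle. For this $f$ we have $f^{(x_n)}(t_n)-f^{(x_n)}(t)=g_2(x_n)\big(h(t_n)-h(t)\big)$, so the first term equals $g_2(x_n)(h(t_n)-h(t))$, while $|\D f^{(x)}|_I(t)=\loclip{f^{(x)}}{I}(t)$ can be read off as $|g_2(x)|\,\loclip{h}{I}(t)$ (using that on an interval the minimal weak upper gradient of a Lipschitz function equals its local Lipschitz constant, or just working with $\loclip{h}{I}$ throughout). Since $g_2$ is continuous, $g_2(x_n)\to g_2(x)$, and since $h$ is Lipschitz, $|h(t_n)-h(t)|\le \Lip(h)|t_n-t|$; combining, one gets $|f^{(x_n)}(t_n)-f^{(x_n)}(t)|\le |g_2(x)|\,\loclip{h}{I}(t)\,|t_n-t|+o(|t_n-t|)$ where the $o$ is uniform because the $n$-dependence has been isolated into the harmless factor $g_2(x_n)$. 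Thus the first term is bounded by $\loclip{f^{(x)}}{I}(t)\,|t_n-t|+o(|t_n-t|)$ after all.

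Putting the two estimates together,
\[
|f(t_n,x_n)-f(t,x)|\le \loclip{f^{(t)}}{X}(x)\,\d_X(x_n,x)+\loclip{f^{(x)}}{I}(t)\,|t_n-t|+o\big(\d_X(x_n,x)+|t_n-t|\big),
\]
and dividing by $\d_c((t_n,x_n),(t,x))=\sqrt{|t_n-t|^2+\d_X(x_n,x)^2}$, applying Cauchy--Schwarz to the numerator $a\alpha+b\beta\le\sqrt{a^2+b^2}\sqrt{\alpha^2+\beta^2}$ with $\alpha=|t_n-t|$, $\beta=\d_X(x_n,x)$, and letting $n\to\infty$ yields
\[
\lip f(t,x)\le\sqrt{\loclip{f^{(t)}}{X}^2(x)+\loclip{f^{(x)}}{I}^2(t)},
\]
which is the claim. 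The only subtlety to check carefully is that, along the chosen sequence, neither $|t_n-t|$ nor $\d_X(x_n,x)$ dominates in a way that invalidates passing the $o(\cdot)$ terms through the division; but since both are bounded by $\d_c((t_n,x_n),(t,x))$, the error terms are $o(\d_c((t_n,x_n),(t,x)))$ and cause no problem.
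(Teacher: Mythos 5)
Your proof is correct and follows essentially the same route as the paper: you split the increment through the intermediate point $(t,x_n)$, which is exactly the paper's decomposition $f(s,y)-f(t,x)=(h(s)-h(t))g_2(y)+\big(f^{(t)}(y)-f^{(t)}(x)\big)$, then apply Cauchy--Schwarz and use the continuity of $g_2$ to absorb the $n$-dependence of the slice $f^{(x_n)}$. The only difference is presentational (explicit $o(\cdot)$ bookkeeping versus the paper's direct passage to the $\limsup$ in the quotients).
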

\begin{proof}
Let $(t,x),(s,y)\in   X_c$, and notice that
\[
\begin{split}
|f(s,y)-f(t,x)|&=|g_1(y)+h(s)g_2(y)-g_1(x)-h(t)g_2(x)|\\
&\leq\frac{|h(s)-h(t)||g_2(y)|}{|s-t|}|s-t|+\frac{|g_1(y)-g_1(x)+h(t)(g_2(y)-g_2(x))|}{\d(x,y)}\d(x,y).
\end{split}
\]
Hence from the  Cauchy-Schwarz inequality we obtain, after a division by $\d_c\big((s,y),(t,x)\big)$:
\[
\frac{|f(s,y)-f(t,x)|}{\d_c\big((s,y),(t,x)\big)}\leq  \sqrt{\frac{|h(s)-h(t)|^2|g_2(y)|^2}{|s-t|^2}+\frac{|g_1(y)-g_1(x)+h(t)(g_2(y)-g_2(x))|^2}{\d^2(x,y)}}.
\]
Letting $(s,y)\to(t,x)$ and using the continuity of $g_2$ we conclude.
\end{proof}
In this last lemma, the fact that $I$ was an interval played no role;  to realize the importance of this restriction and streamline the argument it is useful to introduce the following classes of functions:
\begin{definition}[The classes $\mathcal A$ and $\tilde{\mathcal A}$]
We define the space of functions $\mathcal A\subset \BL_{loc}( X_c)$ as
\[
\begin{split}
\mathcal{A} := \Big\{g_1(x)+h(t)g_2(x)\in  \BL_{loc}( X_c)\ :\   g_1, g_2 \in W^{1,2}{(X)}, ~~h: I\to\R\ \text{ is Lipschitz } \Big\},
\end{split}
\]
and the space $\tilde{\mathcal{A}}\subset \BL_{loc}( X_c)$ as the set of functions $f\in \BL_{loc}( X_c) $ which are locally equal to some function in $\mathcal{A}$.
\end{definition}
Notice that Proposition \ref{prop:linkprod} and the calculus rules \eqref{eq:sumd}, \eqref{eq:leibn} ensure that
\begin{equation}
\label{eq:ainclw}
\tilde{\mathcal A}\subset S^2_{loc}(X_c).
\end{equation}

The interest of functions in $\tilde{\mathcal A}$ is due to the next two results:

\begin{proposition}\label{eq-car-1}
Let  $f \in \tilde{ \mathcal{A}}$. Then
\[
|\D f|_{  X_c}=|\D f|_c \qquad\mm_c-a.e..
\]
\end{proposition}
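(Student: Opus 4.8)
The plan is to prove the two inequalities $|\D f|_{X_c}\le |\D f|_c$ and $|\D f|_c\le |\D f|_{X_c}$ separately, both holding $\mm_c$-a.e. Since the statement is local in nature (both sides are determined locally by locality of minimal weak upper gradients) and membership in $\tilde{\mathcal A}$ is defined via local coincidence with functions in $\mathcal A$, by a standard gluing/locality argument it suffices to treat $f\in\mathcal A$, i.e. $f(t,x)=g_1(x)+h(t)g_2(x)$ with $g_1,g_2\in W^{1,2}(X)$ and $h$ Lipschitz. Actually one should be slightly careful: to run the Lipschitz lemma I will first reduce further to the case where $g_1,g_2$ are Lipschitz and $h$ Lipschitz; the passage to general $g_1,g_2\in W^{1,2}(X)$ will be handled at the end by approximation (Proposition \ref{prop-density}) together with the semicontinuity of minimal weak upper gradients.

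First I would establish $|\D f|_{X_c}\le |\D f|_c$ for $f$ with $g_1,g_2,h$ all Lipschitz. Such $f$ is Lipschitz on bounded sets, hence locally in $S^2(X_c)$ by \eqref{eq:ainclw}, and the minimal weak upper gradient satisfies $|\D f|_{X_c}\le \lip f$ $\mm_c$-a.e. Now Lemma \ref{lemma-leq} gives the pointwise bound $\lip f^2(t,x)\le \loclip{f^{(t)}}{X}^2(x)+\loclip{f^{(x)}}{I}^2(t)$. Since $f^{(t)}$ is Lipschitz on $X$ we have $|\D f^{(t)}|_X\le \loclip{f^{(t)}}{X}$ $\mm$-a.e., and similarly $|\D f^{(x)}|_I\le \loclip{f^{(x)}}{I}$ $\mathcal L^1$-a.e.; but that goes the wrong way. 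So instead I would argue as follows: by Fubini one checks directly that for our explicit $f$ one has $|\D f^{(t)}|_X(x)=|\D(g_1+h(t)g_2)|_X(x)$ and $|\D f^{(x)}|_I(t)=|h'(t)|\,|g_2(x)|$ (the latter because $t\mapsto f(t,x)$ is a Lipschitz function equal to $g_1(x)+h(t)g_2(x)$, whose minimal weak upper gradient on an interval is just the modulus of its a.e.\ derivative). Combining the $\lip$-bound on $\lip f$ with $|\D f^{(t)}|_X\le\loclip{f^{(t)}}{X}$ and $|\D f^{(x)}|_I\le\loclip{f^{(x)}}{I}$ is not what I want; rather, I want to bound $|\D f|_{X_c}$, so I use $|\D f|_{X_c}\le\lip f\le\sqrt{\loclip{f^{(t)}}{X}^2+\loclip{f^{(x)}}{I}^2}$, and then I need an upper bound for the local Lipschitz constants in terms of minimal weak upper gradients, which is false in general. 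The correct route is therefore the one below.

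For the sharper half I would exploit that when $g_1,g_2$ are merely Lipschitz the equality $|\D f|_c=|\D f|_{X_c}$ is really achieved by reducing to Lipschitz $g_1,g_2$ and then using that on an interval the local Lipschitz constant of a Lipschitz function coincides $\mathcal L^1$-a.e.\ with its minimal weak upper gradient, while for the $X$-slices we pass through Proposition \ref{prop:linkprod} and the Leibniz/sum rules \eqref{eq:sumd}, \eqref{eq:leibn} to compute $|\D f|_{X_c}$ from above by $|\D g_1|_X+|h|\,|\D g_2|_X+|g_2|\,|\D h|$ componentwise—no, that also loses orthogonality. Let me reorganize. The honest plan: (1) reduce to $f\in\mathcal A$ by locality; (2) reduce to $g_1,g_2$ Lipschitz by density-in-energy (Proposition \ref{prop-density}) and semicontinuity, noting $|\D f|_c$ behaves continuously under such approximation by Fubini; (3) for Lipschitz $g_1,g_2,h$, combine Proposition \ref{ineq-car-1}, which gives $|\D f|_c\le|\D f|_{X_c}$ $\mm_c$-a.e.\ (and in particular $f\in\BL(X_c)$), with the reverse bound $|\D f|_{X_c}\le\lip f$ together with Lemma \ref{lemma-leq}; (4) finally identify $\lip f^2$, $\loclip{f^{(t)}}{X}^2$, $\loclip{f^{(x)}}{I}^2$ with the corresponding minimal weak upper gradients squared in the a.e.\ sense—this is legitimate on the interval factor since $f^{(x)}$ is Lipschitz, hence $\loclip{f^{(x)}}{I}=|(f^{(x)})'|=|\D f^{(x)}|_I$ a.e., and on the $X$-factor one uses $|\D f^{(t)}|_X\le\loclip{f^{(t)}}{X}$ plus the matching lower bound coming from step (3). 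Putting these together forces $|\D f|_{X_c}\le\sqrt{|\D f^{(t)}|_X^2+|\D f^{(x)}|_I^2}=|\D f|_c$ a.e., which with Proposition \ref{ineq-car-1} yields equality.

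The main obstacle is step (4): reconciling the \emph{pointwise} inequality of Lemma \ref{lemma-leq}, which is phrased with local Lipschitz constants, with the \emph{almost-everywhere} statement about minimal weak upper gradients, given that in general $|\D u|\le\lip u$ is only an inequality. The trick to close this gap is that the inequality $|\D f|_c\le|\D f|_{X_c}$ already available from Proposition \ref{ineq-car-1} provides exactly the reverse comparison needed to upgrade the Lemma's bound to an equality: one applies the Lemma to the explicit Lipschitz representative, integrates, and uses that $\int|\D f|_{X_c}^2\le\int\lip f^2\le\int(\loclip{f^{(t)}}{X}^2+\loclip{f^{(x)}}{I}^2)$ together with the fact that for this specific Lipschitz $f$ one has $\loclip{f^{(x)}}{I}=|\D f^{(x)}|_I$ a.e.\ and can likewise verify $\loclip{f^{(t)}}{X}=|\D f^{(t)}|_X$ a.e.\ on the slices (here using that $g_1+h(t)g_2$ is a fixed Lipschitz function on $X$ whose asymptotic Lipschitz constant agrees a.e.\ with its weak gradient because, by density in energy and semicontinuity applied to $X$ itself, these explicit combinations are ``good'' functions). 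Once both chains of inequalities close up, the squeeze gives $|\D f|_{X_c}=|\D f|_c$ $\mm_c$-a.e., and removing the extra Lipschitz assumption on $g_1,g_2$ via steps (1)--(2) completes the proof.
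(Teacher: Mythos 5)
There is a genuine gap, and it sits exactly where you locate it: step (4). Your squeeze needs the identity $\loclip{f^{(t)}}{X}=|\D f^{(t)}|_X$ $\mm$-a.e.\ for the Lipschitz slice $f^{(t)}=g_1+h(t)g_2$, but on a general metric measure space satisfying only Assumption \ref{assumption} a Lipschitz function $u$ only satisfies $|\D u|_X\leq \loclip uX$, and the inequality can be strict on a set of positive measure (think of spaces with few rectifiable curves, where $|\D u|_X$ can even vanish identically while $\lip u$ does not). Proposition \ref{prop-density} does not rescue this: it produces a \emph{different} sequence of Lipschitz functions $u_n$ with $\loclip{u_n}X\to|\D u|_X$ in $L^2$; it says nothing about $\loclip uX$ for $u$ itself. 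Nor does the squeeze close by itself: Proposition \ref{ineq-car-1} gives the lower bound $|\D f|_c\leq|\D f|_{X_c}$, while Lemma \ref{lemma-leq} gives the upper bound $|\D f|_{X_c}\leq\sqrt{\loclip{f^{(t)}}X^2+\loclip{f^{(x)}}I^2}$; these two bounds differ precisely by the (possibly strict) gap between $\loclip{f^{(t)}}X$ and $|\D f^{(t)}|_X$, so nothing forces equality. Your preliminary reduction (2) to Lipschitz $g_1,g_2$ has a related defect: approximating $g_1,g_2$ in energy and invoking semicontinuity gives inequalities in the wrong direction to recover the equality for general $g_1,g_2\in W^{1,2}(X)$ afterwards.

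The paper's proof resolves exactly this difficulty, and the mechanism is worth absorbing: Lemma \ref{lemma-leq} is never applied to $f$ itself. Instead one discretizes in $t$, setting $f_{k,i}:=g_1+h(\tfrac ik)g_2$, applies Proposition \ref{prop-density} \emph{on $X$} to each $f_{k,i}$ to get Lipschitz $f_{k,i,n}$ with $\loclip{f_{k,i,n}}X\to|\D f_{k,i}|_X$ in $L^2(X,\mm)$, and interpolates linearly in $t$ to build global Lipschitz functions $F_{k,n}\in\tilde{\mathcal A}$ converging to $f$ in $L^2(X_c,\mm_c)$. Lemma \ref{lemma-leq} is applied to the $F_{k,n}$, the convexity of the interpolation controls $\loclip{F^{(t)}_{k,n}}X^2$ by a Riemann sum of the $\loclip{f_{k,i,n}}X^2$, and the lower semicontinuity of $|\D\cdot|_{X_c}$ on the product space transfers the resulting integral bound to $f$. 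In other words, the false pointwise identity you need is replaced by an $L^2$-convergence of local Lipschitz constants along a carefully chosen approximating family, which is exactly what the density-in-energy theorem provides. Your write-up identifies the obstacle correctly but does not supply this substitute, so the argument as proposed does not go through.
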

\begin{proof} Notice that by \eqref{eq:ainclw}  the statement makes sense. Moreover, due to the local nature of the statement we can assume that  $f(t,x)=g_1(x)+h(t)g_2(x)\in \mathcal A$ with $h$ having compact support. With this assumption we have that $f\in W^{1,2}(X_c)$ so that keeping in mind  Proposition \ref{ineq-car-1}, to conclude it is sufficient to prove that
\begin{equation}
\label{eq:toprove}
|\D f|_{X_c}^2(t,x)\leq|\D f^{(x)}|^2_I(t)+|\D f^{(t)}|^2_X(x),\qquad\mm_c-a.e.\ (t,x).
\end{equation}
To this aim, it is in turn sufficient to show that for any $[a,b)\subset I$ and any Borel set $E\subset X$ we have
\begin{equation}
\label{eq:toprove2}
\int_{\tilde E}|\D f|_{ X_c}^2(t,x)\,\d t\,\d\mm(x)\leq\int_{\tilde E}|\D f^{(x)}|^2_I(t)+|\D f^{(t)}|^2_X(x)\,\d t\,\d\mm(x)
\end{equation}
with $\tilde E:=[a,b)\times E$. Indeed if this holds, taking into account that open sets in $ X_c$ can always be written as disjoint countable union of sets of the form $[a,b)\times E$, we deduce that \eqref{eq:toprove2} holds with $\tilde E$ generic open set in $ X_c$, so that using the fact that the integrand are in $L^1( X_c,\mm_c )$ by exterior approximation we get that \eqref{eq:toprove2} holds for arbitrary Borel sets $\tilde E\subset   X_c$ and thus \eqref{eq:toprove} and the conclusion.

Thus fix $E\subset X$ Borel, let $\tilde E:=[a,b)\times E $ and up to a simple scaling argument assume also that $[a,b)=[0,1)$.

For  $k,i\in\N$, $k>0$, we define $f_{k,i}\in W^{1,2}(X) $ as  $f_{k,i}(x):= g_1(x)+h(\tfrac{i}k) g_2(x)$ and $f_k\in \BL(X_c)$ as 
\[
f_k(t,x):=(k t- i)f_{k,i+1}(x)+( i+1-kt)f_{k,i}(x),\qquad\text{ for }t\in \big[\tfrac ik,\tfrac{i+1}k\big].
\]
Notice that $f_k\to f$ in $L^2(X_c,\mm_c)$. By Proposition \ref{prop-density}, for each $(k,i)$ we can find a sequence of Lipschitz functions $f_{k,i,n}\in\Lip{(X)}$ converging to $f_{k,i}$ in $L^2(X,\mm)$ such that $\lim_{n \rightarrow \infty} \lip {f_{k,i,n}}  = |\D f_{k,i}|_X $ in $L^2(X,\mm)$. It will also be convenient to define $g_{k,n},g_{k}\in L^2(X_c,\mm_c)$ as
\[
g_{k,n}(\cdot,t):=k(f_{k,i+1,n}-f_{k,i,n})\qquad\qquad g_{k}(\cdot,t):=k(f_{k,i+1}-f_{k,i})\qquad\qquad\text{ for }t\in (\tfrac ik,\tfrac{i+1}k)
\]
and to notice that
\begin{equation}
\label{eq:limg}
\lim_{n\to\infty} g_{k,n}=g_k\qquad\qquad\lim_{k\to\infty} g_{k}=|\D f^{(\cdot)}|_I(\cdot),
\end{equation}
both limits being in $L^2(X_c,\mm)$: here the first limit is obvious by construction, while the second follows from the identity $g_k(t,x)=k(h(\frac{i+1}k)-h(\frac ik))g_2(x)$ valid  for $t\in (\tfrac ik,\tfrac{i+1}k)$.

Now define $F_{k,n}\in\Lip(  X_c)$ as
\[
F_{k,n}(t,x):=(k t- i)f_{k,i+1,n}(x)+( i+1-kt)f_{k,i,n}(x),\qquad\text{ for }t\in \big[\tfrac ik,\tfrac{i+1}k\big]
\]
and notice that construction we have $F_{k,n}\in\tilde{\mathcal A}$, so that  Lemma \ref{lemma-leq}  gives
\[
|\lip {F_{k,n}}|^2 \leq   |\loclip{ F_{k,n}}{X}|^2 +|\loclip{F_{k,n}}{I}|^2,\qquad\mathcal L^1\times\mm-a.e.,
\]
for every $k,n$ and since we  also have   $\lim_{k} \lim_{n} F_{k,n}=f$ in $L^2( X_c, \mm_c)$, such bound together with  the  lower semicontinuity of minimal weak upper gradients gives that
\begin{equation}
\label{eq:step1}
\int_{\tilde E} |\D f|_{X_c}^2\,\d\mm_c \leq \lims_{k\to\infty}\lims_{n\to\infty}\int_{\tilde E}\loclip{ F_{k,n}}{X}^2\,\d\mm_c  +\lims_{k\to\infty}\lims_{n\to\infty}\int_{\tilde E}\loclip{F_{k,n}}{I}^2\,\d\mm_c
\end{equation}
so that to get \eqref{eq:toprove2} - and thus the conclusion - it is sufficient to prove that
\begin{equation}
\label{eq:pieces}
\begin{split}
 \lims_{k\to\infty}\lims_{n\to\infty}\int_{\tilde E}\loclip{ F_{k,n}}{X}^2\,\d\mm_c &\leq \int_{\tilde E}|\D f^{(t)}|^2_X(x)\,\d \mm_c(t,x),\\
 \lims_{k\to\infty}\lims_{n\to\infty}\int_{\tilde E}\loclip{F_{k,n}}{I}^2\,\d\mm_c&\leq \int_{\tilde E}|\D f^{(x)}|^2_I(t)\,\d \mm_c(t,x).
\end{split}
\end{equation}
To this aim, start observing that 
the continuity of $h$ grants that $\R\ni t\mapsto f^{(t)}\in W^{1,2}(X)$ is continuous so that also the map $I\ni t\mapsto\int _E|\D f^{(t)}|_X^2\,\d\mm$ is continuous. In particular, its integral on $[0,1]$  coincides with the limit of the Riemann sums:
\begin{equation}
\label{eq:riemann}
\begin{split}
\int_{\tilde E}|\D f^{(t)}|^2_X(x)\,\d\mm_c(t,x)&=\lim_{k\to\infty}\frac1k\sum_{i=0}^k\int_E|\D f_{k,i} |_X^2\,\d\mm=\lim_{k\to\infty}\lim_{n\to\infty}\frac1k\sum_{i=0}^k\int_E\loclip  {f_{k,i,n}}{X}^2 \,\d\mm.
\end{split}
\end{equation}
From the very definition of $F_{k,n}$ we get that
\[
\begin{split}
\loclip{F^{(t)}_{k,n}}{X}^2&\leq \big((k t- i)\loclip{f_{k,i+1,n}}X+(i+1-kt)\loclip{f_{k,i,n}}X\big)^2\\
&\leq (k t- i)\loclip{f_{k,i+1,n}}X^2+( i+1-kt)\loclip{f_{k,i,n}}X^2,
\end{split}
\]
on $X$ for every $t\in  [\tfrac ik,\tfrac{i+1}k]$, and thus
\[
\begin{split}
\int_{\tilde E} \loclip{F^{(t)}_{k,n}}{X}^2(x)\,\d\mm_c(t,x) &\leq \int_X\frac1k\sum_{i=0}^k  \loclip  {f_{k,i,n}}{X}^2-\frac1{2}\Big(\loclip{ f_{k,0,n}}{X}^2+\loclip{f_{k,k,n}}{X}^2\Big)\,\d\mm\\
&\leq\int_X \frac1k\sum_{i=0}^k  \loclip  {f_{k,i,n}}{X}^2\,\d\mm.
\end{split}
\]
This inequality and \eqref{eq:riemann} give the first in \eqref{eq:pieces}. The second is a direct consequence of the fact that $\loclip{F_{k,n}}{I}=g_{k,n}$ and the limiting properties \eqref{eq:limg}.\end{proof}

\begin{proposition}[Density in energy]\label{prop-density-2}
For any function $f\in \BL( X_c)$  there exists a sequence  $(f_n)\subset  \BL(  X_c) \cap \tilde{\mathcal{A}}$ converging to $f$ in $L^2(   X_c,\mm_c)$ such that $|\D f_n|_c\to|\D f|_c$ in $L^2(  X_c,\mm_c)$  as $n\to\infty$.
\end{proposition}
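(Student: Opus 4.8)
The plan is to prove the density statement in two stages: first reduce to functions of product type, then use a discretization-in-$t$ and approximation-in-$x$ argument — exactly the same construction used in the proof of Proposition~\ref{eq-car-1}, where such approximants have already been shown to lie in $\tilde{\mathcal A}$ and to converge well. So the work is to upgrade the one-sided estimates obtained there into genuine $L^2$-convergence of the gradients.

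First I would observe that $\BL(X_c)$ is, by definition, endowed with a Hilbert-type norm built from $|\D f|_c$, and that the map $f\mapsto|\D f|_c$ satisfies the triangle-type inequalities coming from \eqref{eq:sumd} fibrewise; hence it suffices to produce, for each $f\in\BL(X_c)$ and each $\eps>0$, some $g\in\BL(X_c)\cap\tilde{\mathcal A}$ with $\|f-g\|_{L^2}+\||\D f|_c-|\D g|_c\|_{L^2}<\eps$, and then iterate/diagonalize. By a cut-off argument (multiplying by Lipschitz functions of $t$ and composing with truncation in $x$, using the Leibniz and chain rules implicit in \eqref{eq:sumd}--\eqref{eq:leibn}) one reduces to $f$ bounded with bounded support in the $t$-variable. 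Next, for a.e.\ fixed $x$ the function $f^{(x)}\in W^{1,2}(I)$, and piecewise-affine interpolation in $t$ at the nodes $i/k$ gives functions $f_k(t,x)$ whose $t$-derivative is the difference quotient $g_k(t,x)=k(f_{k,i+1}-f_{k,i})$; as in \eqref{eq:limg}, $f_k\to f$ in $L^2(X_c,\mm_c)$ and $|\partial_t f_k|=|g_k|\to|\D f^{(x)}|_I$ in $L^2$, by $L^2$-continuity of $t\mapsto f^{(t)}$ in $W^{1,2}(X)$ (which holds because $f\in\BL(X_c)$) together with the fundamental theorem of calculus on $I$. Simultaneously $|\D f_k^{(t)}|_X$ is the affine interpolant of the $|\D f_{k,i}|_X$, which converges in $L^2(X_c,\mm_c)$ to $|\D f^{(t)}|_X$ by the same Riemann-sum/continuity argument as in \eqref{eq:riemann}. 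Thus $|\D f_k|_c\to|\D f|_c$ in $L^2(X_c,\mm_c)$, so it remains to approximate each $f_k$ by an element of $\tilde{\mathcal A}$.

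To do that I fix $k$ and apply Proposition~\ref{prop-density} to each nodal function $f_{k,i}$ to obtain Lipschitz $f_{k,i,n}$ with $f_{k,i,n}\to f_{k,i}$ and $\lip{f_{k,i,n}}\to|\D f_{k,i}|_X$ in $L^2(X,\mm)$; then the affine-in-$t$ interpolant $F_{k,n}$ of the $f_{k,i,n}$ lies in $\tilde{\mathcal A}$ (indeed on each slab it is of the form $g_1(x)+t\,g_2(x)$ with $g_1,g_2$ Lipschitz). By Proposition~\ref{eq-car-1}, $|\D F_{k,n}|_{X_c}=|\D F_{k,n}|_c$, and $|\D F_{k,n}|_c^2=|\D F_{k,n}^{(t)}|_X^2+|\partial_t F_{k,n}|^2$ where the second term equals $g_{k,n}^2$. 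The first term is controlled by the affine interpolant of $\lip{f_{k,i,n}}^2$ (using convexity of $s\mapsto s^2$, exactly as in the displayed estimate in the proof of Proposition~\ref{eq-car-1}), which converges in $L^1(X_c,\mm_c)$ to the affine interpolant of $|\D f_{k,i}|_X^2$, i.e.\ to $|\D f_k^{(t)}|_X^2$; the second term $g_{k,n}^2\to g_k^2=|\partial_t f_k|^2$ in $L^1$. Combining, $\limsup_n\||\D F_{k,n}|_c\|_{L^2}^2\le\||\D f_k|_c\|_{L^2}^2$, while also $F_{k,n}\to f_k$ in $L^2$; together with the lower semicontinuity of $f\mapsto\||\D f|_c\|_{L^2}$ (which follows from semicontinuity of weak upper gradients applied fibrewise, or more directly from Proposition~\ref{eq-car-1} plus semicontinuity of $|\D\cdot|_{X_c}$), this forces $\|F_{k,n}-f_k\|_{L^2}+\||\D F_{k,n}|_c-|\D f_k|_c\|_{L^2}\to0$ as $n\to\infty$. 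A diagonal choice $n=n(k)$ then yields the desired sequence converging to $f$.

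The main obstacle is the second step: turning the one-sided inequalities $\limsup_n\int\lip{\cdot}^2\le\int|\D\cdot|^2$ (all that Proposition~\ref{prop-density} and Lemma~\ref{lemma-leq} directly give, because $\lip{}$ only dominates the weak upper gradient) into strong $L^2$-convergence of $|\D F_{k,n}|_c$. The resolution is the standard Hilbert-space trick: combine the $\limsup$-upper bound with the lower-semicontinuity lower bound to get convergence of the norms, and then norm-convergence plus $L^2$-weak convergence (extracted from the bound) gives strong convergence — but one must be careful that this is being done for the \emph{combined} gradient $|\D f|_c$ and not just for one fibre at a time, and that the two sources of error (discretization in $t$, Lipschitz approximation in $x$) are handled in the right order so that the intermediate objects $f_k$ actually belong to $\BL(X_c)$, which is why bounding and compactly supporting $f$ at the outset is essential.
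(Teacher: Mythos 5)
Your overall architecture (reduce to bounded, compactly supported $f$; discretize in $t$ to get a piecewise-affine-in-$t$ function in $\tilde{\mathcal A}$; recover convergence of the gradients from a $\limsup$ bound plus lower semicontinuity) is the same as the paper's, and your closing remark about combining the one-sided estimate with lower semicontinuity of the $\BL$-norm is exactly how the paper concludes. But there is a genuine gap in the middle step. You build $f_k$ by interpolating the \emph{nodal slices} $f_{k,i}=f(i/k,\cdot)$ and you justify the convergence $|\D f_k^{(t)}|_X\to|\D f^{(t)}|_X$ by invoking ``$L^2$-continuity of $t\mapsto f^{(t)}$ in $W^{1,2}(X)$, which holds because $f\in\BL(X_c)$.'' For a general $f\in\BL(X_c)$ this is false: membership in $\BL(X_c)$ only gives $f^{(t)}\in W^{1,2}(X)$ for \emph{a.e.}\ $t$ and square-integrability of $(t,x)\mapsto|\D f^{(t)}|_X(x)$; the map $t\mapsto f^{(t)}$ is continuous in $L^2(X)$ (via the $t$-derivative bound) but has no continuity whatsoever at the level of the gradients. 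Consequently the individual slices $f(i/k,\cdot)$ need not even lie in $W^{1,2}(X)$ (the nodes may fall in the exceptional null set), and even after a generic shift of the grid, sampling the merely-measurable function $t\mapsto|\D f^{(t)}|_X$ at nodes and interpolating does not converge in $L^2$. The continuity you are using was available in the proof of Proposition~\ref{eq-car-1} only because there $f=g_1+hg_2\in\mathcal A$, so $t\mapsto f^{(t)}=g_1+h(t)g_2$ was genuinely $W^{1,2}$-continuous; it does not transfer to the general case. The paper's fix is to replace nodal values by slab averages $g_{i,n}(x):=n\int_{i/n}^{(i+1)/n}f(x,s)\,\d s$: Jensen's inequality applied to the convex, $L^2$-lower semicontinuous functional $g\mapsto\int|\D g|_X^2\,\d\mm$ shows $g_{i,n}\in W^{1,2}(X)$ with $\int|\D g_{i,n}|_X^2\,\d\mm\leq n\int\int_{i/n}^{(i+1)/n}|\D f^{(t)}|_X^2\,\d t\,\d\mm$, and summing these bounds yields the $\limsup$ inequality for the $x$-component of the gradient without ever asserting pointwise-in-$t$ convergence of the slices. (A related slip: $|\D f_k^{(t)}|_X$ is not \emph{equal} to the affine interpolant of the nodal gradients, only bounded above by it via \eqref{eq:sumd}; the upper bound is all one needs, but the claimed equality is what lets you assert convergence rather than merely a $\limsup$ estimate.)

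A secondary point: your entire second stage --- approximating each nodal function by Lipschitz functions via Proposition~\ref{prop-density} and interpolating those --- is unnecessary for this proposition. The class $\mathcal A$ only requires $g_1,g_2\in W^{1,2}(X)$ (with $h$ Lipschitz in $t$), so the piecewise-affine-in-$t$ interpolant of the $W^{1,2}$ functions $g_{i,n}$ already belongs to $\tilde{\mathcal A}$; the Lipschitz regularization in $x$ is needed in the proof of Proposition~\ref{eq-car-1} (to invoke Lemma~\ref{lemma-leq}), not here. Dropping it removes most of the delicate bookkeeping you describe in your final paragraph.
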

\begin{proof} We shall give the proof for the case $I=\R$, the argument for arbitrary $I$ being similar.

With a standard cut-off, truncation and diagonalization argument we can, and will, assume that the given $f\in \BL(  X_c)$ is bounded and with bounded support. Then  for any $n \in \mathbb{N}$ and $i\in \mathbb{Z}$ we define
\[
g_{i,n}(x):=n\int_\frac{i}n^\frac{(i+1)}n f(x,s)\,ds,
\]
and
\[
h_{i,n}(t):=\chi_n\big(t-\frac{i}n\big),
\]
where $\chi_n:\mathbb{R} \mapsto \mathbb{R}$ is given by:
\begin{equation}
\chi_n(t):=
\left\{
\begin{array}{llll}
0,~~~~~~~~~~~&\text{if}&~t<-\frac1n,\\
nt+1,&\text{if}&~-\frac1n\leq t<0,\\
1-nt,&\text{if}&~0\leq t<\frac1n,\\
0,&\text{if}&~\frac1n<t.
\end{array}\right.
\end{equation}
Then we define the sequence $(f_n)$ as:
\[
f_n(t,x) := \mathop{\sum}_{ i \in \mathbb{Z}} h_{i,n}(t) g_{i,n}(x),
\]
the sum being well defined because $g_{i,n}$ is not zero only for a finite number of $i$'s and it is immediate to check that  $f_n\in \tilde{\mathcal A}$.

We claim that $f_n\to f$ in $L^{2}( X_c,\mm_c)$ as $n\to\infty$. Integrating the inequality
\[
\begin{split}
\big(f_n(t,x)\big)^2&=\left(\sum_{i\in \Z}h_{i,n}(t)g_{i,n}(x)\right)^2\leq\sum_{i\in \Z}h_{i,n}(t)\big(g_{i,n}(x)\big)^2\leq \sum_{i\in \Z}h_{i,n}(t)n\int_{i/n}^{(i+1)/n}f^2(s,x)\,\d s,
\end{split}
\]
on $x$ and $t$ we obtain $\|f_n\|_{L^2( X_c)}\leq \|f\|_{L^2( X_c)}$, for every  $ n\in\N$. This means that the linear operator $T_n$ from $L^2( X_c,\mm_c)$ into itself assigning $f_n$ to $f$ is 1-Lipschitz for every $n\in\N$. Since obviously $f_n\to f$ in $L^2( X_c,\mm_c)$ if $f$ is Lipschitz with bounded support, the uniform continuity of the $T_n$'s grant that $f_n\to f$ in $L^2( X_c,\mm_c)$ for every $f\in L^2(X_c,\mm_c)$.

Now, taking into account the $L^2$-lower semicontinuity of the $\BL$-norm, to conclude it is sufficient to show that for every $n\in\N$ we have
\begin{equation}
\label{eq:perfinirew12}
\begin{split}
\int_{X_c} |\D  f^{(t)}_n|^2_{X}(x)\,\d\mm_c(t,x)&\leq\int_{\R\times X} |\D  f^{(t)}|^2_{X}(x)\,\d\mm_c(t,x),\\
\int_{\tilde   X} |\D  f^{(x)}_n|^2_{\R}(t)\,\d\mm_c(t,x)&\leq\int_{\R\times X} |\D  f^{(x)}|^2_{\R}(t)\,\d \mm_c(t,x).
\end{split}
\end{equation}
Start noticing that the definition of the functions $g_{i,n}$, and Jensen's inequality applied to the convex and lower semicontinuous function on $L^2(X)$ which sends $g$ to $\int |\D g|_X^2\,\d\mm$ (intended to be $+\infty$ if $g\notin W^{1,2(X)}$) we see that $g_{i,n}\in W^{1,2}(X)$ with 
\begin{equation}
\label{eq:venti}
\int_{X}|\D g_{i,n}|^2_{X}\,\d\mm\leq n\int_{X}\int_{i/n}^{(i+1)/n}|\D f^{(t)}|^2_{X'}\,\d t\,\d\mm.
\end{equation}
Then from the trivial identity
\[
f_n^{(t)}=(1+i-nt)g_{i,n}+(nt-i)g_{i+1,n},
\]
valid for every $n$ and a.e.\ $t\in[\tfrac in,\tfrac{i+1}n]$ we know that $f^{(t)}_n\in W^{1,2}(X)$ and
\[
\begin{split}
|\D f_n^{(t)} |_{X}^2&\leq \big((1+i-nt)|\D g_{i,n}|_{X}+(nt-i)|\D g_{i+1,n}|_{X}\big)^2\\
&\leq(1+i-nt)|\D g_{i,n}|_{X}^2+(nt-i)|\D g_{i+1,n}|_{X}^2,
\end{split}
\]
 for every $n$ and a.e.\ $t\in[\tfrac in,\tfrac{i+1}n]$. This yields the bound
\begin{equation}
\label{eq:gradhor}
\begin{split}
\int_{  X_c}|\D f_n^{(t)} |_{X}^2(x)\,\d\mm_c(t,x)&\leq \frac1n\sum_{i\in\Z}\int_{X}|\D g_{i,n}|_{X}^2(x)\,\d\mm( x)\\
\text{by \eqref{eq:venti}\qquad\qquad}&\leq \sum_{i\in\Z}\int_{X}\int_{i/n}^{(i+1)/n}|\D f^{(t)}|_{X}^2(x)\,\d t\,\d\mm(x)\\
&=\int_{ X_c}|\D f^{(t)}|^2_{X}(x)\,\d\mm_c (t,x),
\end{split}
\end{equation}
which is the first in \eqref{eq:perfinirew12}.

Similarly, for $\mm$-a.e.\ $x\in X$ the function $f_n^{(x)}:\R\to\R$ is $\mathcal L^1$-a.e.\ well defined and given by
\[
f_n^{(x)}(t)=(1+i-nt)g_{i,n}(x)+(nt-i)g_{i+1,n}(x),\qquad\mathcal L^1-a.e.\    t\in\big [\tfrac in,\tfrac{i+1}n\big].
\]
Arguing as before we get that $f_n^{(x)}\in W^{1,2}(\R)$ for $\mm$-a.e.\ $x$ and
\[
\begin{split}
\int_{i/n}^{(i+1)/n}|\D f_n^{(x)}|^2_{\R}(t)\,\d t&=\int_{i/n}^{(i+1)/n}n^2\big( g_{i+1,n}(x)-g_{i,n}(x)\big)^2\,\d t\\
&=n\big( g_{i+1,n}(x)-g_{i,n}(x)\big)^2\\
&=n^3\left(\int_{(i+1)/n}^{(i+2)/n}f(t,x)\,\d t-\int_{i/n}^{(i+1)/n}f(t,x)\,\d t\right)^2\\
&=n^3\left(\int_{i/n}^{(i+1)/n}f^{(x)}(t+1/n)-f^{(x)}(t)\,\d t\right)^2\\
&\leq n^3\left(\int_{i/n}^{(i+1)/n}\int_t^{t+1/n}|\D f^{(x)}|_\R(s)\,\d s\,\d t\right)^2\\
&\leq n\int_{i/n}^{(i+1)/n}\int_t^{t+1/n}|\D f^{(x)}|_\R^2(s)\,\d s\,\d t,
\end{split}
\]
which  yields
\[
\int_{X_c}|\D f_n^{(x)}|^2_{\R}(t)\,\d\mm_c(t,x)  \leq \int_{X_c}|\D f^{(x)}|^2_{\R}(t)\,\d\mm_c(t,x),
\]
which is the second in \eqref{eq:perfinirew12} and the conclusion.
\end{proof}

We now have all the tools to prove the main result of this section:

\begin{theorem}\label{theorem-car} The sets   $W^{1,2}(X_c )$ and $\BL(  X_c)$ coincide and for every $f\in W^{1,2}(  X_c)=\BL( X_c)$ the identity

\[
|\D f|_{ X_c}=|\D f|_c\qquad  \mm_c-a.e.,
\]
holds.
\end{theorem}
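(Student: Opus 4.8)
The plan is to combine the three main ingredients already assembled in this section: Proposition~\ref{ineq-car-1} (which gives $W^{1,2}(X_c)\subset\BL(X_c)$ together with the inequality $\int|\D f|_c^2\,\d\mm_c\le\int|\D f|_{X_c}^2\,\d\mm_c$), Proposition~\ref{eq-car-1} (which gives the equality $|\D f|_{X_c}=|\D f|_c$ for $f\in\tilde{\mathcal A}$), and Proposition~\ref{prop-density-2} (density in energy of $\tilde{\mathcal A}$-functions inside $\BL(X_c)$). The only thing that remains to be shown is the reverse inclusion $\BL(X_c)\subset W^{1,2}(X_c)$ together with the reverse inequality $|\D f|_{X_c}\le|\D f|_c$ $\mm_c$-a.e.; the stated identity then follows by combining this with Proposition~\ref{ineq-car-1}.

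First I would fix $f\in\BL(X_c)$. If $f\in L^2$ one may, after a standard cut-off/truncation/diagonalization as in the proof of Proposition~\ref{prop-density-2}, reduce to approximating a general element; but more directly, apply Proposition~\ref{prop-density-2} to obtain a sequence $(f_n)\subset\BL(X_c)\cap\tilde{\mathcal A}$ with $f_n\to f$ in $L^2(X_c,\mm_c)$ and $|\D f_n|_c\to|\D f|_c$ in $L^2(X_c,\mm_c)$. By Proposition~\ref{eq-car-1}, for each $n$ we have $f_n\in S^2_{loc}(X_c)$ with $|\D f_n|_{X_c}=|\D f_n|_c$ $\mm_c$-a.e.; since $f_n\in L^2$ and $|\D f_n|_c\in L^2$, in fact $f_n\in W^{1,2}(X_c)$. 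Now the sequence $(f_n)$ converges to $f$ in $L^2$ and $(|\D f_n|_{X_c})=(|\D f_n|_c)$ is bounded in $L^2$ (it converges). By the lower semicontinuity of minimal weak upper gradients recalled in the preliminaries, $f\in S^2(X_c)$, hence $f\in W^{1,2}(X_c)$, and $|\D f|_{X_c}\le G$ $\mm_c$-a.e.\ for any $L^2$-weak limit $G$ of a subsequence of $(|\D f_n|_{X_c})=(|\D f_n|_c)$. Since $|\D f_n|_c\to|\D f|_c$ strongly in $L^2$, the only such weak limit is $|\D f|_c$, so $|\D f|_{X_c}\le|\D f|_c$ $\mm_c$-a.e. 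This proves $\BL(X_c)\subset W^{1,2}(X_c)$ with the reverse inequality.

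Finally, combining $|\D f|_{X_c}\le|\D f|_c$ (just proved) with $\int_{X_c}|\D f|_c^2\,\d\mm_c\le\int_{X_c}|\D f|_{X_c}^2\,\d\mm_c$ from Proposition~\ref{ineq-car-1}, both of which hold for every $f\in W^{1,2}(X_c)=\BL(X_c)$, forces $|\D f|_{X_c}=|\D f|_c$ $\mm_c$-a.e. (the pointwise inequality together with the reverse integral inequality over all of $X_c$ leaves no room for strict inequality on a set of positive measure). This completes the proof.

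I do not expect any genuine obstacle here, since all the real work has been done in Propositions~\ref{ineq-car-1}, \ref{eq-car-1} and \ref{prop-density-2}; the only point requiring a little care is making sure the approximating functions $f_n$ are genuinely in $W^{1,2}(X_c)$ (not merely $S^2_{loc}$) so that the semicontinuity statement applies verbatim, and that the strong $L^2$-convergence $|\D f_n|_c\to|\D f|_c$ is used to pin down the weak limit $G$. One should also note that the argument is symmetric in the sense that no further appeal to the interval structure of $I$ is needed at this stage — it was already used inside Proposition~\ref{prop-density-2}.
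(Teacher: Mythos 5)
Your proof is correct and follows essentially the same route as the paper: Proposition~\ref{ineq-car-1} for one inclusion, then approximation of $f\in\BL(X_c)$ by $\tilde{\mathcal A}$-functions via Proposition~\ref{prop-density-2}, the gradient identity of Proposition~\ref{eq-car-1}, lower semicontinuity of minimal weak upper gradients to get $|\D f|_{X_c}\le|\D f|_c$, and the integral inequality \eqref{eq:cleqw} to upgrade this to a.e.\ equality. The extra care you take in checking that the approximants lie in $W^{1,2}(X_c)$ and in pinning down the weak limit is sound and consistent with the paper's (more terse) argument.
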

\begin{proof}
Proposition \ref{ineq-car-1} gives the inclusion $W^{1,2}( X_c)\subset \BL(  X_c)$. Now pick $f\in \BL( X_c)$ and find a sequence $(f_n)\subset \BL(   X_c)\cap \tilde{\mathcal A}$ as in Proposition \ref{prop-density-2}. By Proposition \ref{eq-car-1} we know that
\[
|\D f_n|_{ X_c}=|\D f_n|_c\qquad \mm_c-a.e.,\ \forall n\in\N.
\]
By construction, the right hand side converges to $|\D f|_c$ in $L^2( X_c,\mm_c)$ as $n\to\infty$, and since $f_n\to f$ in $L^2(  X_c,\mm_c)$, by the lower semicontinuity of weak upper gradients we deduce that $f\in W^{1,2}( X_c)$ and
\[
|\D f|_{ X_c}\leq |\D f|_c,\qquad \mm_c-a.e.,
\]
which together with inequality \eqref{eq:cleqw} gives the thesis.
\end{proof}

\subsection{Warped product}
Throughout this section $w_\d,w_\mm:I\to[0,+\infty)$ are given continuous functions and $X$ is assumed to have finite measure. We are interested in studying Sobolev functions on the warped product space $(X_w,\d_w,\mm_w)$, where $X_w:=I\times_w X$.

Like in the Cartesian case, given $f:X_w\to\R$ and $t\in I$ we shall denote by $f^{(t)}:X\to\R$ the function given by $f^{(t)}(x):=f(t,x)$. Similarly $f^{(x)}(t):=f(t,x)$ for $x\in X$.

We then consider the   Beppo-Levi space  $\BL( X_w)$ defined as follows:

\begin{definition} [The space $\BL(X_w)$]\label{def-warped-BL} As a set,  $\BL( X_w)$ is the subset of $L^2(X_w,\mm_w)$ made of those  functions $f$ such that:
\begin{itemize}
\item[i)] for $\mm$-a.e.\ $x\in X$ we have $f^{(x)} \in W^{1,2}(\R, w_\mm\mathcal L^1)$,
\item[ii)] for ${w_\mm}\mathcal L^1$-a.e. $t\in \R$ we have $f^{(t)}\in W^{1,2}(X)$,
\item[iii)] the function
\begin{equation}
\label{eq:warpedgrad}
|\D f|_w(t,x):=\sqrt{w_d^{-2}(t)|\D f^{(t)}|_X^2(x)+|\D f^{(x)}|^2_\R(t)}
\end{equation}
belongs to $ L^2( X_w,\mm_w)$.
\end{itemize}
On $\BL(X_w)$ we put the norm
\[
\|f\|_{\BL(X_w)}:=\sqrt{\|f\|_{L^2(X_w)}^2+\||\D f|_w\|_{L^2(X_w)}^2}.
\]
\end{definition}
It will be useful to introduce the following auxiliary space:
\begin{definition}[The space $\BL_0(X_w)$]
Let $V\subset \BL(X_w)$ be the space of functions $f$ which are identically 0 on $\Omega\times X\subset X_w$ for some open set $\Omega\subset \R$ containing $\{w_\mm=0\}$.

$\BL_0(X_w)\subset \BL(X_w)$ is defined as the closure of $V$ in $\BL(X_w)$.
\end{definition}
The goal of this section is to compare the spaces $\BL(X_w)$ and $W^{1,2}(X_w)$ and their respective notions of minimal weak upper gradients, namely $|\D f|_w$ and $|\D f|_{X_w}$. Under the sole continuity assumption of $w_d,w_\mm$ and the compatibility condition $\{w_d=0\}\subset \{w_\mm=0\}$ we can prove that
\[
\BL_0(X_w)\subset W^{1,2}(X_w)\subset \BL(X_w)
\]
and that for any $f\in W^{1,2}(X_w)\subset \BL(X_w)$ the identity
\[
|\D f|_{X_w}=|\D f|_w
\]
holds $\mm_w$-a.e., so that in particular the above inclusions are continuous. Without additional hypotheses it is unclear to us whether $W^{1,2}(X_w)=\BL(X_w)$ (on the other hand, it is easy to construct examples where $\BL_0(X_w)$ is strictly smaller than $\BL(X_w)$). Still, if we assume that 
\begin{equation}
\label{eq:disczero}
\text{the set $\{w_\mm=0\}\subset I$ is discrete}
\end{equation}
and that $w_\mm$ decays at least linearly near its zeros, i.e.
\begin{equation}
\label{eq:lindecay}
w_\mm(t)\leq C\inf_{s:w_\mm(s)=0}|t-s|,\qquad\forall t\in\R,
\end{equation}
for some constant $C\in\R$, then we can prove - using basically arguments about capacities - that
\[
\BL_0(X_w)=\BL(X_w),
\]
so that the three spaces considered are all equal. We remark that these two additional assumptions on $w_\mm$ are satisfied in all the geometric applications we have in mind, because typically one considers cone/spherical suspensions and in these cases $w_\mm$ has at most two zeros and decays polynomially near them.

\bigskip

We turn to the details. The following result is easily established:
\begin{proposition}\label{wp} We have  $ W^{1,2}( X _w)\subset\BL( X _w)$.
\end{proposition}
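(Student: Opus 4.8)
The plan is to follow the same logic that produced Proposition \ref{ineq-car-1} in the Cartesian case, namely to show that if $f\in W^{1,2}(X_w)$ then the two ``slice'' functions $f^{(t)}$ and $f^{(x)}$ lie in the appropriate one-dimensional/$X$-Sobolev classes for a.e.\ parameter value, and that the candidate gradient $|\D f|_w$ from \eqref{eq:warpedgrad} is $L^2(X_w,\mm_w)$-dominated by $|\D f|_{X_w}$. Since Sobolev membership is detected by test plans, the natural route is to \emph{transport test plans from the factors up to the warped product} and test the defining inequality for $S^2(X_w)$ against them. Concretely, I would first reduce to $f$ Lipschitz with bounded support via the density-in-energy Proposition \ref{prop-density} together with the lower semicontinuity of minimal weak upper gradients (the $L^2$-convergence being preserved and the $\BL$-norm being $L^2$-lower semicontinuous, exactly as used in the proof of Theorem \ref{theorem-car}); for Lipschitz $f$ one has the pointwise bound $|\D f|_{X_w}\le \lip f$ and the slicing is then transparent.

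The key computation is the comparison of local Lipschitz constants. For $f$ Lipschitz on $X_w$, I would estimate, for fixed $x$, the local Lipschitz constant of $t\mapsto f(t,x)$ in the metric of $(I,\mathcal L^1)$: moving only in the $I$-direction costs $\d_w\big((t,x),(s,x)\big)\le |t-s|+o(|t-s|)$ by Definition \ref{def-warped-1} (the warped length of a curve with constant $X$-component is just the $I$-length), so $\loclip{f^{(x)}}{I}(t)\le \lip f(t,x)$. Similarly, for fixed $t$ with $w_\d(t)>0$, moving only in the $X$-direction from $x$ to $y$ costs $\d_w\big((t,x),(t,y)\big)\le w_\d(t)\,\d_X(x,y)+o(\d_X(x,y))$, whence $\loclip{f^{(t)}}{X}(x)\le w_\d(t)\,\lip f(t,x)$, i.e.\ $w_\d^{-2}(t)\loclip{f^{(t)}}{X}^2(x)\le \lip f^2(t,x)$. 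Adding a weighted version of these two gives $|\D f|_w^2(t,x)\le \lip f^2(t,x) + \lip f^2(t,x)$ only after one is careful about the constant; the correct statement, parallel to Lemma \ref{lemma-leq}, is the pointwise inequality $w_\d^{-2}(t)\loclip{f^{(t)}}{X}^2(x)+\loclip{f^{(x)}}{I}^2(t)\le \lip f^2(t,x)$ valid wherever $w_\d>0$, which follows by writing a competitor curve that interpolates in both directions and applying Cauchy--Schwarz against the weighting in the definition of $l_w$, just as in Lemma \ref{lemma-leq}. On the set $\{w_\d=0\}\times X$, which by the compatibility hypothesis $\{w_\d=0\}\subset\{w_\mm=0\}$ is $\mm_w$-negligible, there is nothing to prove. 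Integrating this pointwise bound against $\mm_w$ and using $|\D f|_{X_w}\le\lip f$ gives $\int_{X_w}|\D f|_w^2\,\d\mm_w\le \int_{X_w}|\D f|_{X_w}^2\,\d\mm_w$ for Lipschitz $f$, and then the density/semicontinuity reduction promotes it to all of $W^{1,2}(X_w)$, simultaneously verifying conditions (i)--(iii) of Definition \ref{def-warped-BL}.

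I expect the main obstacle to be the verification that the \emph{slices of a general Sobolev function are Sobolev}, i.e.\ conditions (i) and (ii), rather than the gradient inequality itself. For the Lipschitz case this is immediate, but the passage to the limit requires care: after extracting $(f_n)$ Lipschitz with $\lip{f_n}\to|\D f|_{X_w}$ in $L^2$, one must argue that for $w_\mm\mathcal L^1$-a.e.\ $t$ the slices $f_n^{(t)}$ form (along a suitable subsequence) a sequence bounded in $W^{1,2}(X)$ converging in $L^2(X)$ to $f^{(t)}$, which uses Fubini on $X_w\cong$ (a full-measure subset of) $I\times X$ with measure $w_\mm\mathcal L^1\otimes\mm_X$ and then the lower-semicontinuity property of $|\D\cdot|_X$ recalled in the preliminaries; the symmetric argument handles $f^{(x)}$ against $W^{1,2}(\R,w_\mm\mathcal L^1)$. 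Once these measurability/extraction issues are handled the proposition follows, and — as the authors note — it is ``easily established'' precisely because all the genuinely new work (the warped-length estimates) is a minor variant of Lemma \ref{lemma-leq}.
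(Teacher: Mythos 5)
Your overall architecture (approximate by Lipschitz functions with $\lip{f_n}\to|\D f|_{X_w}$ in $L^2$, bound the partial local Lipschitz constants of the slices by the full one, then pass to the limit via Fubini, Fatou and the lower semicontinuity of minimal weak upper gradients) is exactly the paper's. But your ``key computation'' contains a false step: the pointwise inequality
\[
w_\d^{-2}(t)\,\loclip{f^{(t)}}{X}^2(x)+\loclip{f^{(x)}}{I}^2(t)\ \le\ \lip{f}^2(t,x)
\]
does \emph{not} hold for a general Lipschitz function on the (warped) product. Already in the unwarped case $X_w=\R^2$ with $w_\d\equiv 1$, the function $f(t,x)=\max(t,x)$ has $\lip f=1$ at the origin while both partial local Lipschitz constants equal $1$ there, so the left-hand side is $2$. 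Your appeal to Lemma \ref{lemma-leq} is also backwards: that lemma proves the \emph{opposite} inequality ($\lip f^2$ bounded by the sum of the squared partial constants), it does so only for the special class of functions $g_1(x)+h(t)g_2(x)$, and it is the engine for the reverse inclusion $\BL\subset W^{1,2}$, not for the one at hand. There is no ``competitor curve plus Cauchy--Schwarz'' argument that yields your inequality, because the two partial limsups can be attained along incompatible directions.

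The good news is that the proposition does not need the sharp constant. Each one-directional bound separately, namely $\loclip{f_n^{(x)}}{I}(t)\le\lip{f_n}(t,x)$ and $w_\d^{-1}(t)\loclip{f_n^{(t)}}{X}(x)\le\lip{f_n}(t,x)$ on $\{w_\d>0\}$, is true and, after Fatou and slice-wise lower semicontinuity, gives $\int|\D f^{(x)}|_I^2\,\d\mm_w\le\int|\D f|_{X_w}^2\,\d\mm_w$ and $\int w_\d^{-2}|\D f^{(t)}|_X^2\,\d\mm_w\le\int|\D f|_{X_w}^2\,\d\mm_w$ separately. Summing gives $\||\D f|_w\|_{L^2}^2\le 2\||\D f|_{X_w}\|_{L^2}^2$, which is all that is required for the set inclusion $W^{1,2}(X_w)\subset\BL(X_w)$; this is precisely what the paper does. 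The sharp statement $|\D f|_w=|\D f|_{X_w}$ is genuinely harder and is proved only afterwards (Proposition \ref{prop:samegrad}) by localizing in $t$, freezing the weights, and comparing with the Cartesian case via Lemma \ref{gradient-compare} and Theorem \ref{theorem-car} --- not by any pointwise Lipschitz-constant estimate. If you delete the claimed sharp pointwise inequality and the sentence deducing $\int|\D f|_w^2\le\int|\D f|_{X_w}^2$ from it (which also misuses $|\D f|_{X_w}\le\lip f$ in the wrong direction), the rest of your plan is sound and coincides with the paper's proof.
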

\begin{proof}
Pick $f \in W^{1,2}(X_w)$ and use Proposition \ref{prop-density} to find a sequence $(f_n)$ of Lipschitz functions  on $X_w$ such that $f_n \rightarrow f$ and $\lip{f_n} \rightarrow |\D f|_{X_w}$ in $L^2(X_w)$.  Up to passing to  a subsequence, not relabelled, we can further assume that $\sum_n\|f_{n+1}-f\|_{L^2(X_w)}<\infty$, so that the inequality
\[
\begin{split}
\big\|\sum_n\|f^{(t)}_{n}-f^{(t)}\|_{L^2(X,\mm)}\big\|_{L^2(w_\mm\mathcal L^1)}&=\sqrt{\int_I\int_X\sum_n|f^{(t)}_{n}(x)-f^{(t)}(x)|^2\,\d\mm(x)w_\mm(t)\,\d t}\\
&=\big\|\sum_n|f_n-f|\big\|_{L^2(X_w,\mm_w)}\leq\sum_n \big\||f_n-f|\big\|_{L^2(X_w,\mm_w)}<\infty
\end{split}
\]
shows that for $w_\mm\mathcal L^1$-a.e.\ $t$ we have  $\sum_n\|f^{(t)}_{n}-f^{(t)}\|_{L^2(X,\mm)}<\infty$ and thus in particular  $f^{(t)}_n\to f^{(t)}$ in $L^2(X,\mm)$. Similarly,  
 for $\mm$-a.e. $x \in X$, we have $f^{(x)}_n \rightarrow f^{(x)}$ in $L^2(I,w_\mm\mathcal L^1)$.
 
Observe that for every $(t,x)\in X_w$ we have
\begin{eqnarray*}
\lip {f_n}(t,x)&= & \lmts{(s,y)}{(t,x)} \frac{|f_n(s,y)-f_n(t,x)|}{\d_w({(s,y)},{(t,x)})} \\&\geq&  \lmts{s}{t} \frac{|f_n(s,x)-f_n(t,x)|}{\d_w({(s,x)},{(t,x)})} \\&=&  \lmts{s}{t} \frac{|f^{(x)}_n(s)-f^{(x)}_n(t)|}{|s-t|} = \loclip {f^{(x)}_n}{I}(t)
\end{eqnarray*}
and therefore by Fatou's lemma we deduce
\[
\begin{split}
\int_X\limi_{n\to\infty}\int_I\loclip {f^{(x)}_n}{I}^2(t)\,\d(w_\mm\mathcal L^1)(t)\,\d\mm(x)&\leq \limi_{n\to\infty}\int_{X_w}\lip {f_n}^2(t,x)\,\d\mm_w(t,x)\\
&=\int_{X_w}|\D f|_{X_w}^2\,\d\mm_w<\infty.
\end{split}
\]
Since $f^{(x)}_n \rightarrow f^{(x)}$ in $L^2(I,w_\mm\mathcal L^1)$ for $\mm$-a.e.\ $x\in X$, this last inequality together with the lower semicontinuity of minimal weak upper gradients ensures that $f^{(x)}\in W^{1,2}(I,w_\mm\mathcal L^1)$ for $\mm$-a.e.\ $x\in X$ and
\begin{equation}
\label{eq:tpart}
\int_{X_w}|\D f^{(x)}|_I^2(t)\,\d\mm_w(t,x)\leq \int_{X_w}|\D f|_{X_w}^2\,\d\mm_c.
\end{equation}
An analogous argument starting from the bound
\begin{eqnarray*}
\lip {f_n}(t,x)&=&\lmts{(s,y)}{(t,x)} \frac{|f_n(s,y)-f_n(t,x)|}{\d_w({(s,y)},{(t,x)})}  \\&\geq & \lmts{y}{x} \frac{|f_n(t,y)-f_n(t,x)|}{\d_w({(t,y)},{(t,x)})} \\& = &  \lmts{y}{x} \frac{|f^{(t)}_n(y)-f^{(t)}_n(x)|}{w_\d(t)\d(x,y)} 
=\frac 1{w_\d(t)}\loclip {f^{(t)}_n}{X}(x)
\end{eqnarray*}
valid for every $t\in I$ such that $w_\d(t)>0$, grants that $f^{(t)}\in W^{1,2}(X)$ for $w_\mm\mathcal L^1$-a.e.\ $t\in I$ (recall that $\{w_\d=0\}\subset\{w_\mm=0\}$) and
\begin{equation}
\label{eq:xpart}
\int_{X_w}\frac{|\D f^{(t)}|_X^2(x)}{w_\d^2(t)}\,\d\mm_w(t,x)\leq \int_{X_w}|\D f|_{X_w}^2\,\d\mm_w.
\end{equation}
The bounds \eqref{eq:tpart} and \eqref{eq:xpart} ensure that  $f\in \BL(X_w)$, so that the inclusion $W^{1,2}(X_w)\subset \BL(X_w)$ is proved.
\end{proof}

In order to prove that for  $f\in W^{1,2}(X_w)\subset\BL(X_w)$ the minimal weak upper gradient $|\D f|_{X_w}$ coincides with the `warped' gradient $|\D f|_w$ defined in \eqref{eq:warpedgrad}, we shall make use of the following simple comparison argument, which will then allow us to reduce the proof to the already known cartesian case.
\begin{lemma}\label{gradient-compare}
Let $X$ be a set, $\d_1,\d_2$ two distances on it and $\mm_1,\mm_2$ two measures. Assume that $(X,\d_1,\mm_1)$ and $(X,\d_2,\mm_2)$ are both metric measure spaces satisfying the Assumptions \ref{assumption}, that for some $C>0$ we have  $ \mm_2 \leq C\mm_1$ and that for some $L>0$ we have  $\d_1\leq L \d_2$.

Then denoting by $S(X_1),S(X_2)$ the Sobolev classes relative to  $(X,\d_1,\mm_1)$ and $(X,\d_2,\mm_2)$ respectively and by $|\D f|_1,|\D f|_2$ the associated minimal weak upper gradients, we have  
\[
S(X_1)\subset S(X_2)
\] 
and for every $f\in S(X_1)$ the inequality
\[
|\D f|_{2}\leq L |\D f|_1,
\]
holds $\mm_2$-a.e..
\end{lemma}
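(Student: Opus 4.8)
The plan is to work directly with the definition of the Sobolev class via test plans and to observe that the hypotheses $\d_1\leq L\,\d_2$ and $\mm_2\leq C\mm_1$ make the identity map $\Id:(X,\d_2,\mm_2)\to(X,\d_1,\mm_1)$ compatible with test plans, in the sense that every test plan for the target is a test plan for the source. Granting this, the inequality between weak upper gradients follows at once from the definition.

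First I would record the elementary metric facts. Since $\d_1\leq L\,\d_2$, the $\d_2$-topology is finer than the $\d_1$-topology; hence every $\d_1$-Borel function is $\d_2$-Borel, and every $\d_2$-continuous curve is $\d_1$-continuous, so that $C([0,1],(X,\d_2))\subseteq C([0,1],(X,\d_1))$ and the inclusion is continuous (it is $L$-Lipschitz for the respective uniform distances). Moreover, if $\gamma\in AC^2([0,1],(X,\d_2))$ then $\gamma\in AC^2([0,1],(X,\d_1))$ and the metric speeds satisfy $|\dot\gamma_t|_1\leq L|\dot\gamma_t|_2$ for a.e.\ $t$: this is immediate from $\d_1(\gamma_s,\gamma_t)\leq L\,\d_2(\gamma_s,\gamma_t)\leq L\int_s^t G(r)\,\d r$ together with the minimality property of the metric derivative recalled in the Preliminaries.

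Next I would check that a test plan $\pi$ for $(X,\d_2,\mm_2)$ is a test plan for $(X,\d_1,\mm_1)$ (regarding it, via the continuous inclusion above, as a Borel probability measure on $C([0,1],(X,\d_1))$, which is harmless since the evaluation maps agree). Bounded compression transfers because $(e_t)_\sharp\pi\leq C_\pi\mm_2\leq C_\pi C\,\mm_1$; concentration on $AC^2$ transfers by the previous paragraph; and the finite-energy condition transfers since
\[
\int\!\!\int_0^1|\dot\gamma_t|_1^2\,\d t\,\d\pi(\gamma)\;\leq\;L^2\int\!\!\int_0^1|\dot\gamma_t|_2^2\,\d t\,\d\pi(\gamma)\;<\;+\infty .
\]
Now let $f\in S(X_1)$ with a $2$-weak upper gradient $G\in L^2(X,\mm_1)$. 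For every test plan $\pi$ for $(X,\d_2,\mm_2)$, using the previous step and then $|\dot\gamma_s|_1\leq L|\dot\gamma_s|_2$,
\[
\int|f(\gamma_1)-f(\gamma_0)|\,\d\pi(\gamma)\;\leq\;\int\!\!\int_0^1 G(\gamma_s)|\dot\gamma_s|_1\,\d s\,\d\pi(\gamma)\;\leq\;L\int\!\!\int_0^1 G(\gamma_s)|\dot\gamma_s|_2\,\d s\,\d\pi(\gamma),
\]
so $LG$ is a $2$-weak upper gradient of $f$ in $(X,\d_2,\mm_2)$; and $LG\in L^2(X,\mm_2)$ because $\int (LG)^2\,\d\mm_2\leq L^2C\int G^2\,\d\mm_1<+\infty$. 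Hence $f\in S(X_2)$ and, by minimality of $|\D f|_2$, we get $|\D f|_2\leq LG$ $\mm_2$-a.e.; taking $G=|\D f|_1$ yields the claim.

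\textbf{Main obstacle.} There is essentially no deep obstacle here: the whole content is the bookkeeping in the second step, i.e.\ verifying that the three defining properties of a test plan survive the passage from $(\d_2,\mm_2)$ to $(\d_1,\mm_1)$, together with the $L^2$-integrability check for $LG$ at the end. The only point requiring a little care is the behaviour of absolute continuity and of the metric derivative under the $L$-Lipschitz identity map, but this is standard; I would also mention explicitly the measure-theoretic remark that $\pi$ is to be viewed on the larger curve space via the continuous inclusion.
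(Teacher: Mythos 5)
Your proof is correct and follows essentially the same route as the paper: both arguments rest on the observation that $\d_2$-absolutely continuous curves are $\d_1$-absolutely continuous with $|\dot\gamma|_1\leq L|\dot\gamma|_2$, so that every $(X,\d_2,\mm_2)$-test plan is an $(X,\d_1,\mm_1)$-test plan, after which the inclusion of Sobolev classes and the inequality $|\D f|_2\leq L|\D f|_1$ follow from the definition and the minimality of $|\D f|_2$. Your extra bookkeeping (the $L^2(\mm_2)$-integrability of $LG$ and the remark on viewing $\pi$ on the larger curve space) is a welcome elaboration of steps the paper leaves implicit.
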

\begin{proof}
The assumptions ensure that the topology induced by $\d_2$ is finer than the one induced by $\d_1$, hence every $\d_1$-Borel function is also $\d_2$-Borel.  Then observe that the assumption $\d_1\leq L \d_2$ ensures that $\d_2$-absolutely continuous curves are also $\d_1$-absolutely continuous, the $\d_1$-metric speed being bounded by $L$-times the $\d_2$-metric speed. Then considering also the assumption $ \mm_2 \leq C\mm_1$ we see that $(X,\d_2,\mm_2)$ -test plans are also $(X,\d_1,\mm_1)$-test plans, which, by definition, gives the inclusion $S(X_1)\subset S(X_2)$. The inequality $|\D f|_{2}\leq L |\D f|_1$ $\mm_2$-a.e. is then obtained by the $\mm_2$-a.e.\ minimality of $|\D f|_2$ and the  opposite inequality valid for the metric speeds.
\end{proof}

We can then prove the following result:
\begin{proposition}\label{prop:samegrad} Let $f\in W^{1,2}(X_w)\subset \BL(X_w)$. Then
\[
|\D f|_{X_w}=|\D f|_w,\qquad\mm_w-a.e..
\]
\end{proposition}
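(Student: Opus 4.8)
The plan is to reduce the warped product to the already-understood Cartesian product via the comparison lemma (Lemma \ref{gradient-compare}), exploiting the fact that away from the zeros of $w_\d$ the warped distance $\d_w$ is bi-Lipschitz equivalent, on bounded regions of $I$, to the Cartesian distance associated with a constant value of $w_\d$. More precisely, fix a compact subinterval $J\subset I$ on which $w_\d$ is bounded away from $0$ and from $+\infty$, say $0<a\le w_\d\le b<\infty$ on $J$. On $J\times X$ the warped length of a curve $\gamma=(\gamma^I,\gamma^X)$ is $\int\sqrt{|\dot\gamma^I|^2+w_\d^2(\gamma^I)|\dot\gamma^X|^2}\,\d t$, which is squeezed between the Cartesian lengths computed with constant weights $a$ and $b$. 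This gives $\d_{c,a}\le \d_w\le \d_{c,b}$ on $J\times X$ (at least locally, which is all one needs for the metric-speed comparison), where $\d_{c,\lambda}$ is the Cartesian distance on $I\times X$ built from the rescaled metric $\lambda\,\d_X$ on the $X$-factor. Since these are all finite, mutually comparable measures and distances, Lemma \ref{gradient-compare} applies in both directions.

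The key steps, in order, are as follows. First I would record that the statement is local: by the locality of minimal weak upper gradients it suffices to prove $|\D f|_{X_w}=|\D f|_w$ $\mm_w$-a.e.\ on each set of the form $\pi(J\times X)$ with $J$ a compact interval on which $w_\d$ is bounded and bounded away from $0$; such sets cover $X_w$ up to the $\mm_w$-negligible set $\{w_\mm=0\}\times X$ (here one uses $\{w_\d=0\}\subset\{w_\mm=0\}$). Second, on such a region I would invoke Theorem \ref{theorem-car} for the Cartesian product of $I$ (or rather $J$) with the rescaled space $(X,\lambda\,\d_X,\mm_X)$: note that for the rescaled space the minimal weak upper gradient is $\lambda^{-1}|\D g|_X$, the $W^{1,2}$-class is unchanged, and the Cartesian Beppo-Levi gradient becomes $\sqrt{\lambda^{-2}|\D f^{(t)}|_X^2+|\D f^{(x)}|_I^2}$, which is exactly $|\D f|_w$ when $\lambda=w_\d(t)$ — but here $\lambda$ is a constant, not $w_\d(t)$. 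Third, I would use the comparison Lemma \ref{gradient-compare} twice with the constant weights $a$ and $b$ to sandwich $|\D f|_{X_w}$ between the two constant-weight Cartesian gradients, then let $J$ shrink to a point $t_0$ (equivalently, use a covering of $J$ by small subintervals on which $b/a\to 1$) so that both bounds converge to $\sqrt{w_\d^{-2}(t_0)|\D f^{(t_0)}|_X^2+|\D f^{(x)}|_I^2}$ for a.e.\ $t_0$; by continuity of $w_\d$ this pins down $|\D f|_{X_w}(t,x)=|\D f|_w(t,x)$ for $\mm_w$-a.e.\ $(t,x)$.

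I would also want the reverse inequality $|\D f|_{X_w}\le|\D f|_w$ cleanly: after establishing the Cartesian identity on each rescaled piece, the comparison lemma with $\d_1=\d_{c,a}$, $\d_2=\d_w$ (constant $L=b/a$ locally, since $\d_{c,a}\le\d_w\le\d_{c,b}\le(b/a)\d_{c,a}$) gives $|\D f|_{X_w}\le (b/a)|\D f|_{c,a}=(b/a)\sqrt{a^{-2}|\D f^{(t)}|_X^2+|\D f^{(x)}|_I^2}$ on $J\times X$, while the opposite direction gives the matching lower bound; letting $b/a\downarrow1$ along a shrinking cover forces equality. Conversely one must check the inclusion aspect, i.e.\ that $f\in\BL(X_w)$ restricted to a good region is actually in $W^{1,2}$ of that region so that $|\D f|_{X_w}$ makes sense there — but this is already supplied by Proposition \ref{wp} (we assumed $f\in W^{1,2}(X_w)$), so no extra work is needed.

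The main obstacle I anticipate is the careful handling of the local-versus-global distinction in the distance comparison: $\d_w$ and $\d_{c,\lambda}$ are intrinsic (length) distances, and the pointwise inequalities $\d_{c,a}\le\d_w\le\d_{c,b}$ hold globally only if curves realizing the distances stay inside $J\times X$, which need not happen. The clean fix is to observe that Lemma \ref{gradient-compare} really only needs the comparison of \emph{metric speeds} of absolutely continuous curves, and the metric speed of a curve at a point depends only on the local behaviour of the distance; so on the open region $\mathring J\times X$ the inequalities between metric speeds hold unconditionally, and one runs the Sobolev-class/weak-upper-gradient comparison on that open region. Making this localisation of Lemma \ref{gradient-compare} rigorous — i.e.\ replacing the global hypotheses $\d_1\le L\d_2$, $\mm_2\le C\mm_1$ by their restrictions to an open set and checking that test plans localise accordingly — is the technical heart of the proof; everything else is bookkeeping with rescalings and continuity of $w_\d$.
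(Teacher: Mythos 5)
Your overall strategy is the same as the paper's: localize in the $I$-variable using the continuity of $w_\d$, compare the warped space with a Cartesian product via Lemma \ref{gradient-compare}, invoke Theorem \ref{theorem-car} on the Cartesian side, and let the comparison constants tend to $1$ (your ``shrinking cover with $b/a\downarrow 1$'' is exactly the paper's $\eps\downarrow 0$ plus a Lindel\"of argument over $\{w_\mm>0\}$). You have also correctly identified the one real difficulty: the inequalities $\d_{c,a}\le \d_w\le \d_{c,b}$ are statements about \emph{intrinsic} distances, and curves nearly realizing $\d_w$ between two points of $J\times X$ need not stay in $J\times X$, so the hypotheses of Lemma \ref{gradient-compare} are not satisfied as stated.

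The gap is that you leave this difficulty unresolved. Your proposed fix --- proving a localized version of Lemma \ref{gradient-compare} in which $\d_1\le L\d_2$ and $\mm_2\le C\mm_1$ are only assumed on an open set, and checking that test plans ``localize accordingly'' --- is not bookkeeping: it requires restricting test plans to the portions of curves contained in an open set (measurable selection of entrance/exit times, re-verification of bounded compression and finite kinetic energy for the restricted plan), i.e.\ essentially developing a local Sobolev calculus that the paper deliberately avoids. The paper's resolution is simpler and goes in the opposite direction: instead of localizing the lemma, it \emph{globalizes} the comparison. Fix $t_0$ with $w_\mm(t_0)>0$, choose $\delta$ so that $w_\d$ oscillates by at most a factor $1+\eps$ on $[t_0-2\delta,t_0+2\delta]$, and replace $w_\d,w_\mm$ by functions $\bar w_\d,\bar w_\mm$ that agree with them on $[t_0-2\delta,t_0+2\delta]$ and are \emph{constant} outside. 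For the modified warped product $X_{\bar w}$ the two-sided comparison with the Cartesian product holds on the whole of $\R\times X$, so Lemma \ref{gradient-compare} applies verbatim in both directions. One then multiplies $f$ by a Lipschitz cutoff $\nchi$ supported in $[t_0-2\delta,t_0+2\delta]$ and equal to $1$ on $[t_0-\delta,t_0+\delta]$, so that $\bar f=\nchi f$ can be read as a function on $X_{\bar w}$, and uses the locality of minimal weak upper gradients (twice: once to pass from $f$ to $\bar f$, once to pass from $X_w$ to $X_{\bar w}$, since the two warped structures agree on $[t_0-2\delta,t_0+2\delta]\times X$) to transfer the resulting identity back to $|\D f|_{X_w}=|\D f|_w$ up to a factor $(1+\eps)^2$ on $[t_0-\delta,t_0+\delta]\times X$. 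If you adopt this device, the rest of your argument goes through unchanged; as written, the ``technical heart'' you point to is precisely the part that is missing.
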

\begin{proof}
Fix $\epsilon>0$ and $t_0\in\R$ such that $w_\mm(t_0)>0$ so that also $w_\d(t_0)>0$. Use the continuity of $w_d$ to find $\delta>0$ so that
\begin{equation}
\label{eq:fordistance}
\Big|\frac{w_\d(t)}{w_\d(s)}\Big|\leq 1+\epsilon\qquad \forall t,s\in [t_0-2\delta,t_0+2\delta]
\end{equation}
and let $\nchi:\R\to[0,1]$ be a Lipschitz function identically 1 on $[t_0-\delta,t_0+\delta]$ with support contained in $[t_0-2\delta,t_0+2\delta]$. 

We introduce the continuous functions $\bar w_\d,\bar w_\mm:\R\to \R$ as
\[
\bar w_\d(t):=\left\{\begin{array}{ll}
w_\d(t_0-2\delta),&\quad\text{if }t<t_0-2\delta,\\
w_\d(t),&\quad\text{if }t\in[t_0-2\delta,t_0+2\delta],\\
w_\d(t_0+2\delta),&\quad\text{if }t>t_0+2\delta,\\
\end{array}\right.
\]
\[\bar w_\mm(t):=\left\{\begin{array}{ll}
w_\mm(t_0-2\delta),&\quad\text{if }t<t_0-2\delta,\\
w_\mm(t),&\quad\text{if }t\in[t_0-2\delta,t_0+2\delta],\\
w_\mm(t_0+2\delta),&\quad\text{if }t>t_0+2\delta,\\
\end{array}\right.
\]
the corresponding product space $(X_{\bar w},\d_{\bar w},\mm_{\bar w})$ and consider the function $\bar f:X_w\to\R$ given by $\bar f(t,x):=\nchi(t) f(t,x)$ which belongs to $W^{1,2}(X_w)$ and therefore, by what we just proved, to $\BL(X_w)$. The locality property of minimal weak upper gradients ensures that
\[
|\D f|_{X_w}=|\D \bar f|_{X_w}\quad\text{and}\quad|\D f|_{w}=|\D \bar f|_{w}\qquad\mm_w-a.e.\ \text{on }[t_0-\delta,t_0+\delta]\times X.
\]
Since $\bar f$ has support concentrated in the set of $(t,x)$'s with $t\in[t_0-2\delta,t_0+2\delta]$ and $w_\d$ is positive in such interval, we can think of $\bar f$ also as a real valued function $X_{\bar w}$. With this identification in mind it is clear that
\[
|\D \bar f|_{X_w}=|\D \bar f|_{X_{\bar w}}\quad\text{and}\quad|\D \bar f|_{w}=|\D \bar f|_{\bar w}\qquad\mm_w-a.e.\ \text{on }[t_0-2\delta,t_0+2\delta]\times X.
\]
We now consider the cartesian product $(X_c,\d_c,\mm_c)$ of $(X,\d,\mm)$ and $\R$. Notice that the sets $X_{\bar w}$ and $X_c$ both coincide with $\R\times X$ and that by construction (recall also \eqref{eq:fordistance}) we have
\[
c\mm_{c}\leq \mm_{\bar w}\leq C\mm_{c} \qquad\text{and}\qquad \frac{w_\d(t_0)}{1+\eps}\d_c\leq \d_{\bar w}\leq w_\d(t_0)(1+\eps)\d_c
\]
where  $c:=\min_{[t_0-2\delta,t_0+2\delta]}w_\mm$ and $C=\max_{[t_0-2\delta,t_0+2\delta]}w_\mm$. Hence by Lemma \ref{gradient-compare} we deduce that $\mm_{\bar w}$-a.e.\ it holds
\[
\frac{|\D \bar f|_{X_c}}{w_\d(t_0)(1+\eps)}\leq |\D \bar f|_{X_{\bar w}}\leq \frac{1+\eps}{w_\d(t_0)}|\D \bar f|_{X_c}\quad\text{and}\quad
\frac{|\D \bar f|_{c}}{w_\d(t_0)(1+\eps)}\leq |\D \bar f|_{{\bar w}}\leq \frac{1+\eps}{w_\d(t_0)}|\D \bar f|_{c}.
\]

Since by Theorem \ref{theorem-car} we know that $|\D\bar f|_{X_c}=|\D \bar f|_c$ $\mm_c$-a.e.,  collecting what we proved we deduce that
\[
\frac{|\D f|_{X_w}}{(1+\eps)^2}\leq |\D f|_w\leq (1+\eps)^2|\D f|_{X_w}
\]
$\mm_w$-a.e.\ on $[t_0-\delta,t_0+\delta]\times X$. By the arbitrariness of $t_0$ such that $w_\mm(t_0)>0$  and the Lindelof property of $\{w_\mm>0\}\subset\R$ we deduce that the above inequality holds $\mm_w$-a.e.. The conclusion then follows letting $\eps\downarrow0$.
\end{proof}
We now turn to the general relation between $\BL_0(X_w)$ and $W^{1,2}(X_w)$:
\begin{proposition} We have  $\BL_0(X_w)\subset W^{1,2}(X_w)$.
\end{proposition}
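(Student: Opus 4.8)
The plan is to show that every element of the dense subspace $V\subset \BL_0(X_w)$ already lies in $W^{1,2}(X_w)$, together with a uniform comparison of norms on $V$; since $W^{1,2}(X_w)$ is complete and $\BL_0(X_w)$ is by definition the $\BL(X_w)$-closure of $V$, the inclusion $\BL_0(X_w)\subset W^{1,2}(X_w)$ will follow once we know that $\BL$-convergence of a Cauchy sequence forces $W^{1,2}$-convergence. Concretely: given $f\in V$, by definition $f$ vanishes on $\Omega\times X$ for some open $\Omega\supset\{w_\mm=0\}$; I would fix a compact exhaustion of the open set $\{w_\mm>0\}\supset \R\setminus\Omega$ and work locally there, where $w_\d$ is bounded below by a positive constant.

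The main step is a \emph{local} identification of $f$ (restricted to a region $[a,b]\times X$ with $[a,b]\subset\{w_\mm>0\}$) as an element of $W^{1,2}$ of a \emph{cartesian} product, via Lemma \ref{gradient-compare}, exactly as in the proof of Proposition \ref{prop:samegrad}. On such a strip we have $c\,\mm_c\le \mm_w\le C\,\mm_c$ and $L^{-1}\d_c\le \d_w\le L\,\d_c$ for suitable constants depending only on $\max$ and $\min$ of $w_\d,w_\mm$ over $[a,b]$; hence $\BL(X_w)$ restricted to the strip agrees, as a set and up to uniformly bounded factors in the norm, with $\BL(X_c)$ restricted to the strip, and by Theorem \ref{theorem-car} the latter is $W^{1,2}(X_c)$ with $|\D\cdot|_c=|\D\cdot|_{X_c}$. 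Applying Lemma \ref{gradient-compare} in both directions between $(X_c,\d_c,\mm_c)$ and $(X_w,\d_w,\mm_w)$ on the strip then gives that $f\restr{[a,b]\times X}\in S^2_{loc}(X_w)$ with $|\D f|_{X_w}\le L\,|\D f|_c \le L\,C'\,|\D f|_w$ $\mm_w$-a.e.\ on $(a,b)\times X$. Since $f$ vanishes on $\Omega\times X$, gluing these local estimates over a countable cover of $\{w_\mm>0\}$ (using the locality of minimal weak upper gradients and the Lindelöf property, as in Proposition \ref{prop:samegrad}) yields that $f\in S^2(X_w)$, hence $f\in W^{1,2}(X_w)$, with a global bound $\||\D f|_{X_w}\|_{L^2(X_w)}\le \kappa\,\||\D f|_w\|_{L^2(X_w)}$ valid for all $f\in V$ — \emph{provided} the constant $\kappa$ can be taken independent of $f$.

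The hard part is precisely that last uniformity: the comparison constants $L,C'$ blow up as the strip $[a,b]$ approaches a zero of $w_\mm$, so a naive strip-by-strip argument produces $f$-dependent (indeed cover-dependent) constants. The way around this is to avoid needing a uniform $\kappa$ at all: since $f_n\to f$ in $\BL(X_w)$ means $f_n\to f$ and $|\D f_n|_w\to |\D f|_w$ in $L^2(X_w)$, and since on every fixed strip $[a,b]\subset\{w_\mm>0\}$ the $\BL(X_w)$-norm controls the $\BL(X_c)$-norm (with a constant depending only on the strip), Proposition \ref{ineq-car-1}/Theorem \ref{theorem-car} give $|\D f_n|_{X_c}=|\D f_n|_c\le C_{[a,b]}|\D f_n|_w$ on the strip, so $(f_n)$ is $W^{1,2}(X_c)$-Cauchy on each strip; by completeness and Lemma \ref{gradient-compare} again, $f\in W^{1,2}_{loc}$ on $\{w_\mm>0\}\times X$ with $|\D f|_{X_w}\le C_{[a,b]}|\D f|_w$ on each strip. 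Taking the supremum over an exhaustion and using $|\D f|_w\in L^2(X_w)$ together with monotone convergence (here it is important that the pointwise bound $|\D f|_{X_w}\le |\D f|_w \cdot(\text{local factor})$ can in fact be sharpened to $|\D f|_{X_w}\le |\D f|_w$ on each strip in the limit $\eps\downarrow 0$, exactly by the $(1+\eps)^2$ argument of Proposition \ref{prop:samegrad}) shows $|\D f|_{X_w}\le |\D f|_w$ $\mm_w$-a.e.\ globally, hence $f\in W^{1,2}(X_w)$. This completes the proof.
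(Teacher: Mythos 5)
Your reduction to showing $V\subset W^{1,2}(X_w)$, with the $\BL$-to-$W^{1,2}$ convergence supplied by Proposition \ref{prop:samegrad}, matches the paper; but the core of your argument has a genuine gap: you cannot pass from strip-wise Sobolev regularity to $f\in S^2(X_w)$ the way you propose. Two distinct problems arise. First, Lemma \ref{gradient-compare} compares two metric measure structures on the \emph{same} underlying set, both satisfying Assumption \ref{assumption}; the restriction of $(X_w,\d_w)$ to a strip $[a,b]\times X$ is not the warped product of $[a,b]$ and $X$, because nearly optimal curves between points of the strip may exit it (detouring to where $w_\d$ is small), so the induced distance can be strictly smaller than the intrinsic one and need not even be a length metric on the strip — the lemma does not apply to such restrictions. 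Second, and more seriously, knowing that $f$ restricted to each strip belongs to the Sobolev class \emph{of that strip} does not give $f\in S^2_{loc}(X_w)$: the latter requires $f$ to agree locally with functions in $S^2(X_w)$ of the full space, and test plans on $X_w$ carry curves that are not confined to any strip and can approach $\{w_\mm=0\}$. The locality of minimal weak upper gradients, which you invoke to glue, only compares $|\D f|$ and $|\D g|$ on $\{f=g\}$ for $f,g$ already known to lie in $S^2_{loc}$ of one and the same space; it is not a local-to-global membership criterion. Relatedly, invoking the $(1+\eps)^2$ argument of Proposition \ref{prop:samegrad} presupposes that $|\D f|_{X_w}$ exists, i.e.\ exactly what is to be proved.

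The paper circumvents both problems by a global modification rather than a restriction: after reducing to $f\in V$ with bounded support, it replaces $w_\d,w_\mm$ by continuous $w'_\d,w'_\mm$ that agree with them on a neighbourhood of the support of $f$ and are pinched between positive constants on all of $\R$. The space $X_{w'}$ is then globally comparable to the cartesian product, so $f\in W^{1,2}(X_{w'})$ by Theorem \ref{theorem-car} and Lemma \ref{gradient-compare}, and Proposition \ref{prop-density} yields \emph{globally} $\d_{w'}$-Lipschitz approximations $f_n$ with $\sup_n\int{\rm lip}'(f_n)^2\,\d\mm_{w'}<\infty$. After multiplication by a cutoff supported where $w=w'$, these become globally $\d_w$-Lipschitz: the key point is the length-space identity $\Lip(\cdot)=\sup\lip{\cdot}$ combined with the fact that the \emph{local} Lipschitz constants for $\d_w$ and $\d_{w'}$ coincide on the region where the warping functions agree. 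The conclusion then follows from $|\D\tilde f_n|_{X_w}\le\lip{\tilde f_n}$ and lower semicontinuity of weak upper gradients. Producing these global approximating objects is precisely the step your strip-by-strip scheme is missing.
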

\begin{proof}
Taking into account Proposition \ref{prop:samegrad} it is sufficient to prove that $V\subset W^{1,2}(X_w)$. Notice that for arbitrary $f\in\BL(X_w)$, considering the functions  $\nchi_n(t):=0\vee (n-|t|)\wedge 1$ and defining $f_n(t,x):=\nchi_n(t)f(t,x)$, via a direct verification of the definitions we have $f_n\in\BL(X_w)$, while inequality \eqref{eq:leibn} and the dominated convergence theorem grant that $f_n\to f$ in $\BL(X_w)$. Therefore, using again Proposition \ref{prop:samegrad} which ensures that $\BL$-convergence implies $W^{1,2}$-convergence, to conclude it is sufficient to show that any $f\in V$ with support contained in $(I\cap [-T,T])\times X\subset X_w$ for some $T>0$ belongs to $W^{1,2}(X_w)$.

Thus fix such  $f\in V$, for $r>0$ denote by $\Omega_r\subset \R$ the $r$-neighborhood of  $\{w_\mm=0\}$ and find $r\in(0,1)$ such that $f$ is $\mm_w$-a.e.\ zero on $\Omega_{2r}\times X$.  Then by continuity and compactness and recalling that  $\{w_\d=0\}\subset \{w_\mm=0\}$ we deduce that there are constants $0<c\leq C<\infty$ such that
\[
c\leq w_\d(t),w_\mm(t)\leq C,\qquad\forall t\in I\cap [-T,T]\setminus\Omega_{r/2}.
\]
We are now going to use a comparison argument similar to that used in the proof of Proposition \ref{prop:samegrad}. Find two continuous functions $w'_\d,w'_\mm$ agreeing with $w_\d,w_\mm$ on $[-T,T]\setminus\Omega_{r/2}$ and such that $c\leq w'_\d,w'_\mm\leq C$ on the whole $\R$ and consider the warped product $(X_{w'},\d_{w'},\mm_{w'})$ and the cartesian product $(X_c,\d_c,\mm_c)$ of $I$ and $X$. We then have the equalities of sets:
\[
\BL(X_{w'})=\BL(X_c)=W^{1,2}(X_c)=W^{1,2}(X_{w'}),
\]
the first and last  coming from Lemma \ref{gradient-compare} and the properties of $w'_\d,w'_\mm$ and the middle one being given by Theorem \ref{theorem-car}.

By the construction of $w'_\d,w'_\mm$ we see that $f\in \BL(X_{w'})$ and thus, by what we just proved, that $f\in W^{1,2}(X_{w'})$. Then Proposition \ref{prop-density} grants that there exists a sequence $(f_n)$ of $\d_{w'}$-Lipschitz functions converging to $f$ in $L^2(X_{w'})$ with  
\[
\sup_{n\in\N}\int {\rm lip}'(f_n)^2\,\d\mm_{w'}<\infty
\] 
uniformly bounded in $n$, where by ${\rm lip}'$ we denote the local Lipschitz constant computed w.r.t.\ the distance $\d_{w'}$. Notice that up to replacing $f_n$ with $(-C_n)\vee f_n\wedge C_n$ for a sufficiently large $C_n$, we can, and will, assume that $f_n$ is bounded for every $n\in\N$.

Now find a Lipschitz  function $\nchi:I\to [0,1]$ identically 0 on $\Omega_r\cup(I\setminus[-T-1,T+1])$, identically 1 on $I\cap [-T,T]\setminus\Omega_{2r}$ and put $\tilde f_n(t,x):=\nchi(t)f_n(t,x)$. By construction it is immediate to check that the $\tilde f_n$'s are still $\d_{w'}$-Lipschitz, converging to $f$ in $L^2(\mm_{w'})$ and satisfying
\begin{equation}
\label{eq:bound1}
\sup_{n\in\N}\int {\rm lip}'(\tilde f_n)^2\,\d\mm_{w'}<\infty.
\end{equation}
We now claim that the $\tilde f_n$'s are $\d_w$-Lipschitz, converging to $f$ in $L^2(X_w)$ and such that
\begin{equation}
\label{eq:bound2}
\sup_{n\in\N}\int {\rm lip}(\tilde f_n)^2\,\d\mm_{w}<\infty,
\end{equation}
from which the conclusion follows by the lower semicontinuity of weak upper gradients and the bound $|\D \tilde f_n|_{X_w}\leq \lip{\tilde f_n} $ valid $\mm_w$-a.e.. Since all the functions $\tilde f_n$ and $f$ are concentrated on $([-T,T]\setminus\Omega_r)\times X$ and on this set the measures $\mm_w$ and $\mm_{w'}$ agree, we clearly have $L^2(X_w)$-convergence. Moreover, since $w_\d$ and $w_{\d'}$ agree on $([-T,T]\setminus\Omega_r)\times X$, the topologies on $([-T,T]\setminus\Omega_r)\times X$ induced by $\d_w$ and $\d_{w'}$ agree (with the product topology, given that these functions are positive) and a direct use of the definition yields
\[
\lim_{(s,y)\to (t,x)}\frac{\d_w\big((s,y),(t,x)\big)}{\d_{w'}\big((s,y),(t,x)\big)}=1,\qquad\forall (t,x)\in([-T,T]\setminus\Omega_r)\times X.
\] 
In particular, we have ${\rm lip}(\tilde f_n)={\rm lip}'(\tilde f_n)$ in $([-T,T]\setminus\Omega_r)\times X$, so that \eqref{eq:bound2} follows from \eqref{eq:bound1}. It remains to prove that $\tilde f_n$ is $\d_w$-Lipschitz; to this aim recall that on a length space the Lipschitz constant of a function is equal to the supremum of the local Lipschitz constant and conclude by
\[
\Lip(f_n)= \sup_{X_w}\lip{f_n}=\sup_{X_{w'}}{\rm lip}'(f_n)=\Lip'(f_n)<\infty,
\]
where $\Lip'(f_n)$ denotes the $\d_{w'}$-Lipschitz constant.
\end{proof}
Finally, we prove that if the set of zeros of $w_\mm$ is discrete and $w_\mm$ decays at least linearly close to its zeros, then $\BL_0(X_w)=\BL(X_w)$:
\begin{proposition}  Assume that $w_\mm$ has the properties \eqref{eq:disczero}
 and \eqref{eq:lindecay}.
 
Then $\BL_0(X_w)=\BL(X_w)$.
\end{proposition}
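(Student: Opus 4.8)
The goal is to show that any $f\in\BL(X_w)$ can be approximated in the $\BL(X_w)$-norm by functions in $V$, i.e. functions vanishing on a neighborhood of $\{w_\mm=0\}\times X$. The plan is to multiply $f$ by suitable cutoff functions $\phi_r(t)$ that vanish near $\{w_\mm=0\}$, are $1$ away from it, and transition on a scale $r$; then let $r\downarrow 0$. By the standard truncation/diagonalization argument one first reduces to the case where $f$ is bounded and has bounded support, so there are finitely many zeros $s_1,\dots,s_m$ of $w_\mm$ to worry about, and near each it suffices to handle a single zero $s_j=0$.

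The main point is a \emph{capacity}-type estimate. Fix a zero at $0$ and, for $r$ small, let $\phi_r:\R\to[0,1]$ be $0$ on $[-r^2,r^2]$, $1$ outside $[-r,r]$, and logarithmically interpolating in between, i.e. $\phi_r(t)=\bigl(\log(|t|/r^2)\bigr)/\bigl(\log(1/r)\bigr)$ for $r^2\le|t|\le r$ (the precise profile is the one that makes the weighted Dirichlet energy of $\phi_r$ small). Set $f_r(t,x):=\phi_r(t)f(t,x)$; clearly $f_r\in V$. Then $f_r\to f$ in $L^2(X_w,\mm_w)$ by dominated convergence, and using the Leibniz-type bound on $|\D f_r|_w$ coming from \eqref{eq:warpedgrad} we get, up to harmless constants,
\[
\||\D f_r|_w-|\D f|_w\|_{L^2(X_w,\mm_w)}^2\lesssim \int_{X_w}(1-\phi_r(t))^2|\D f|_w^2\,\d\mm_w+\int_{X_w}|\phi_r'(t)|^2f(t,x)^2\,\d\mm_w(t,x).
\]
The first term tends to $0$ by dominated convergence. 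For the second, since $f$ is bounded, it is controlled by $\|f\|_\infty^2\,\mm(X)\int_{\R}|\phi_r'(t)|^2w_\mm(t)\,\d t$, and here is where hypothesis \eqref{eq:lindecay} enters: $w_\mm(t)\le C|t|$ near $0$, so $\int_{r^2\le|t|\le r}|\phi_r'(t)|^2w_\mm(t)\,\d t\lesssim \frac{1}{(\log(1/r))^2}\int_{r^2}^r \frac{C\,t}{t^2}\,\d t=\frac{C}{(\log(1/r))^2}\cdot\log(1/r)=\frac{C}{\log(1/r)}\to 0$. So the whole right-hand side vanishes as $r\downarrow0$, giving $f_r\to f$ in $\BL(X_w)$.

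I would then assemble the pieces: cover the finitely many zeros by disjoint small intervals, use a product of the local cutoffs (equivalently a single cutoff $\phi_r$ which is the pointwise minimum of the ones centered at each zero), apply the above estimate on each piece, and conclude $f_r\in V$ with $f_r\to f$ in $\BL(X_w)$; hence $\BL(X_w)\subset\overline V=\BL_0(X_w)$, and the reverse inclusion is by definition. The main obstacle is getting the capacity estimate sharp: one needs the logarithmic cutoff profile (a naive linear cutoff on scale $r$ would give $\int|\phi_r'|^2w_\mm\gtrsim r^{-2}\int_0^r t\,\d t\sim 1$, which does \emph{not} vanish), and one must check that the borderline linear-decay assumption \eqref{eq:lindecay} is exactly what makes the logarithmic construction work — this is why only "at least linear" decay, and not something weaker, is assumed. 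The discreteness assumption \eqref{eq:disczero} is used only to reduce, after the truncation step, to finitely many well-separated zeros so that the cutoffs do not interfere.
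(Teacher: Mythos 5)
Your proof is correct and follows essentially the same route as the paper's: both reduce to bounded $f$, multiply by a logarithmic cutoff supported away from the zeros of $w_\mm$, and use the linear decay \eqref{eq:lindecay} to show the capacity-type integral $\int|\phi_r'|^2w_\mm\,\d t\lesssim 1/\log(1/r)$ vanishes. The only cosmetic differences are your choice of transition scales $[r^2,r]$ versus the paper's $[n^{-1},1]$, and that you perform the bounded-support reduction up front while the paper carries the spatial and $t$-infinity cutoffs $\sigma_m,\tilde\eta_n$ through the main estimate.
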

\begin{proof}
A standard truncation argument shows that $\BL\cap L^\infty(X_w)$  is dense in $\BL(X_w)$, so to conclude it is sufficient to show that for any $f\in\BL\cap L^\infty(X_w)$ we can find a sequence $(f_n)\subset V$ converging to it in $\BL(X_w)$.

Thus pick $f\in \BL\cap L^\infty(X_w)$, put $D(t):=\min_{s:w_\mm(s)=0}|t-s|$ and for $n,m\in\N$, $n>1$ consider the  cut-off functions 
\[
\begin{split}
\sigma_m(x)&:=0\vee(m-\d(x,\bar x))\wedge 1,\\
\eta_n(t)&:=0\vee\Big( 1-\frac{|\log(D(t))|}{\log (n)}\Big)\wedge 1,\\
\tilde\eta_n(t)&:=0\vee\Big( n-|t|\Big)\wedge 1,
\end{split}
\]
where $\bar x\in X$ is a chosen, fixed point, and  define $f_{n,m}(t,x):=\eta_n(t)\tilde\eta_n(t)\sigma_m(x)f(t,x)$. Since  $(t,x)\mapsto \eta_n(t)\tilde\eta_n(t)\sigma_m(x)$ is Lipschitz and bounded for every $n,m$, a direct check of the definition of $\BL(X_w)$ shows that $f_{n,m}\in \BL(X_w)$ for every $n,m$ and, since $\eta_n$ is 0 on a neighborhood of $\{w_\mm=0\}$, we also have $f_{n,m}\in V$ for every $n,m$.

Using the fact that the functions   $(t,x)\mapsto \eta_n(t)\tilde\eta_n(t)\sigma_m(x)$ are uniformly bounded by 1 and pointwise converge to 1 as $n,m\to\infty$ and the dominated convergence theorem we see that  $f_{n,m}\to f$ in $L^2(X_w)$ as $n,m\to\infty$.

Next, recalling \eqref{eq:leibn} and using that $\sigma_m$ is 1-Lipschitz we see that
\[
|\D (f^{(t)}-f^{(t)}_{n,m})|_X(x)\leq \big|\eta_n(t)\tilde\eta_n(t)\sigma_m(x)-1\big|\,|\D f^{(t)}|_X(x)+|f(t,x)|1_{\{\d(\cdot,\bar x)\geq m-1\}}(x)
\]
for $\mm_w$-a.e.\ $(t,x)$, so that the dominated convergence theorem again gives that $\int |\D(f^{(t)}-f^{(t)}_{n,m})|_X^2(x)\,\d\mm_w(t,x)\to  0$ as $n,m\to\infty$.

Similarly, we have
\[
\begin{split}
|\D(f^{(x)}-f^{(x)}_{n,m})|_I(t)\leq&  \big|\eta_n(t)\tilde\eta_n(t)\sigma_m(x)-1\big|\,|\D f^{(x)}|_I(t)+|f(t,x)| 1_{\{|\cdot|\geq n-1\}}(t)\\
&+|f(t,x)| 1_{\{\d(\cdot,\bar x)\leq m\}}(x) 1_{\{|\cdot|\leq n\}}(t)|\partial_t\eta_n|(t)
\end{split}
\]
for $\mm_w$-a.e.\ $(t,x)$ and again by dominated convergence we see that the first two terms in the right hand side go to 0 in $L^2(X_w)$ as $n,m\to \infty$. For the last term, we use the fact that $f$ is bounded and our  assumptions on $w_\mm$. Observe indeed that $|\partial_t\eta_n|(t)\leq \frac{ 1_{D^{-1}([n^{-1},1])}(t)}{D(t)\log n}$ so that letting $x_1,\ldots,x_N$ be the finite number of zeros of $w_\mm$ in $[-n-1,n+1]$ we have
\[
\begin{split}
\int |f(t,x)|^2 1_{\{\d(\cdot,\bar x)\leq m\}}(x)& 1_{\{|\cdot|\leq n\}}(t)|\partial_t\eta_n|^2(t) \,\d\mm_w\\
&\leq \frac{\|f\|^2_{L^\infty}\mm(B_m(\bar x))}{\log(n)^2}\int_{[-n,n]\cap D^{-1}([n^{-1},1]) }\frac{1}{D^2(t)}w_\mm(t)\,\d t\\
&\leq C\frac{\|f\|^2_{L^\infty}\mm(B_m(\bar x))}{\log(n)^2}\int_{[-n,n]\cap D^{-1}([n^{-1},1])}\frac{1}{D(t)}\,\d t\\
&\leq C\frac{\|f\|^2_{L^\infty}\mm(B_m(\bar x))}{\log(n)^2}\sum_{i=1}^N\int_{\{t:|t-x_i|\in[n^{-1},1]\}}\frac{1}{|t-x_i|}\,\d t\\
&= 2NC\frac{\|f\|^2_{L^\infty}\mm(B_m(\bar x))}{\log(n)}.
\end{split}
\]
Since the last term goes to 0 as $n\to\infty$ for every $m\in\N$, we just proved that
\[
\lim_{m\to\infty}\lim_{n\to\infty}\int |\D(f^{(x)}-f^{(x)}_{n,m})|_I^2(t)\,\d\mm_w(t,x)=0,
\]
which is sufficient to conclude.
\end{proof}

\subsection{Sobolev-to-Lipschitz property}\label{se:stl}
The aim of this section is to study the Sobolev-to-Lipschitz property on warped products; let us recall the definition:
\begin{definition}[Sobolev-to-Lipschitz property]\label{def:stl}
We say that   a metric measure space $(X ,\d, \mm)$ has  Sobolev to Lipschitz property if for any function $f\in W^{1,2}(X)$  with $|\D f|_X \in L^\infty(X)$, we can find a function  $\tilde{f}$ such that $f=\tilde{f}$ $\mm$-a.e. and  $\Lip (\tilde f)=\esup ~{|\D f|_X}$.
\end{definition}

\bigskip

Metric measure spaces with the Sobolev-to-Lipschitz property are, in some sense, those whose metric properties can be studied via Sobolev calculus. Such property firstly appeared, in its `dual' formulation and in the formalism of Dirichlet forms, in \cite{AmbrosioGigliSavare11-2}, where it was written as:
\begin{equation}
\label{eq:stlrcd}
\begin{split}
&\text{The intrinsic distance induced by the quadratic Cheeger energy}\\
&\text{is equal to the distance on the space}
\end{split}
\end{equation}
and it was proved that $\rcd$ spaces have such property. \eqref{eq:stlrcd} has also been used in \cite{AmbrosioGigliSavare12} as one of the ingredients for the axiomatization of the $\rcd$ condition purely in terms of Dirichlet forms. In \cite{Gigli13} the definition as above has been introduced and it has been observed that, regardless of lower Ricci bounds or infinitesimal Hilbertianity, this notion is sufficient to derive metric properties of the space out of properties of Sobolev functions defined on it. As discussed in the introductionthis is relevant in applications when little to nothing is known about the base space.

We observe that under the only assumption that $w_\d,w_\mm$ are continuous we cannot hope to prove that $X_w$ has the Sobolev-to-Lipschitz property. Indeed, if $w_\mm$ is 0 on some subinterval of $I$ which disconnects $I$, then the measure $\mm_w$ has disconnected support and therefore we can find  non-constant functions on $X_w$ which are locally constant on the support of $\mm_w$, which is easily seen to violate the Sobolev-to-Lipschitz condition.

We shall therefore only consider the case where $w_\mm$ is strictly positive in the interior of $I$, a condition which is satisfied in the standard geometric constructions like that of cone/spherical suspension.

It is unclear to us whether, even with this condition on $w_\mm$, the Sobolev-to-Lipschitz property passes to warped products or not. What we are able to do, instead, is to identify two quite general properties which imply the Sobolev-to-Lipschitz, each of which passes to warped products. This will also imply that whenever these two properties hold for a certain base space, one can consider multiple warped products and still get that the final space has the Sobolev-to-Lipschitz property.

\bigskip

We begin introducing the two auxiliary concepts we just alluded to. The first is a variant  of the length property which takes into account the reference measure:
\begin{definition}[Measured-length space]\label{def-good}
We say that a metric measure space $(X,\d,\mm)$ is measured-length if there exists a Borel set $A\subset X$ whose complement is $\mm$-negligible with the following property. For every $x_0,x_1\in A$  there exists $ \eps>0$ such that for every  $\eps_0,\eps_1\in(0,\eps]$ there is a test plan $\pi^{\eps_0,\eps_1}\in\mathcal P(C([0,1],X))$ with:
\begin{itemize}
\item[a)] the map $(0,\eps]^2\ni(\eps_0,\eps_1)\mapsto \pi^{\eps_0,\eps_1}$ is weakly Borel in the sense that for any $\varphi\in C_b(C([0,1],X))$ the map
\[
(0,\eps]^2\ni(\eps_0,\eps_1)\qquad\mapsto \qquad \int \varphi\,\d\pi^{\eps_0,\eps_1},
\]
is Borel.
\item[b)] We have 
\[
(e_0)_\sharp\pi^{\eps_0,\eps_1}= \frac{1_{B_{\eps_0}(x_0)}}{\mm(B_{\eps_0}(x_0))}\,\mm,\qquad\text{ and }\qquad (e_1)_\sharp\pi^{\eps_0,\eps_1}= \frac{1_{B_{\eps_1}(x_1)}}{\mm(B_{\eps_1}(x_1))}\,\mm
\]
 for every $\eps_0,\eps_1\in(0,\eps]$,
\item[c)] We have
\begin{equation}
\label{eq:limitenergy}
\lims_{\eps_0,\eps_1\downarrow 0}\iint_0^1|\dot\gamma_t|^2\,\d t\,\d\pi^{\eps_0,\eps_1}(\gamma)\leq \d^2(x_0,x_1).
\end{equation}
\end{itemize}
\end{definition}
\begin{remark}\label{rem:constsp}{\rm
Notice that $(b)$ forces
\[
\limi_{\eps_0,\eps_1\downarrow 0}\iint_0^1|\dot\gamma_t|^2\,\d t\,\d\pi(\gamma)\geq \d^2(x_0,x_1),
\]
so that $(c)$ can be read as a sort of `optimality in the limit'. In this direction, notice that with a reparametrization by constant-speed argument one can always assume that the plans $\pi$ above also fulfil:
\begin{equation}
\label{eq:constdef}
t\qquad\mapsto\qquad\int |\dot\gamma_t|^2\,\d\pi(\gamma)\qquad\text{ is constant,}
\end{equation}
because such reparametrization decreases the kinetic energy.
}\fr
\end{remark}
The second definition is a simple modification of the usual doubling notion:
\begin{definition}[a.e.\ locally doubling spaces]\label{def:aedoub}
We say that a metric measure space $(X,\d,\mm)$ is a.e.\ locally doubling provided there exists a Borel set $B$ whose complement is $\mm$-negligible such that for every $x\in B$ there are an open set $\Omega$ containing $x$  and constants $C,R>0$ such that
\[
\mm(B_{2r}(y))\leq C\mm(B_r(y)),\qquad\forall r\in(0,R),\ y\in \Omega.
\]
\end{definition}
It is easy to check that a a.e.\ locally doubling and measured-length space has the Sobolev-to-Lipschitz property:
\begin{proposition}\label{prop:stl}
Let $(X,\d,\mm)$ be an a.e.\  locally doubling and measured-length space. Then it has the Sobolev-to-Lipschitz property.
\end{proposition}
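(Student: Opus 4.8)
The plan is to take an arbitrary $f\in W^{1,2}(X)$ with $L:=\esup\,|\D f|_X<\infty$ and show that it has a representative which is $L$-Lipschitz. Since one always has $|\D f|_X\leq\lip{\tilde f}$ whenever $\tilde f=f$ $\mm$-a.e.\ and $\tilde f$ is Lipschitz, and since $X$ is a length space (so $\Lip(\tilde f)=\sup_x\lip{\tilde f}(x)$), it suffices to produce \emph{some} continuous representative $\tilde f$ with $|\tilde f(x_0)-\tilde f(x_1)|\leq L\,\d(x_0,x_1)$ for all $x_0,x_1$ in a dense set, which then extends by continuity. First I would record the standard consequence of the a.e.\ locally doubling hypothesis: by a Lebesgue-differentiation / Vitali-covering argument, for $\mm$-a.e.\ $x$ the averages $\fint_{B_r(x)}f\,\d\mm$ converge as $r\downarrow0$ to $f(x)$; call $A'$ the full-measure set of such Lebesgue points, intersected with the sets $A$ and $B$ coming from Definitions \ref{def-good} and \ref{def:aedoub}.

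The heart of the argument is the following estimate, valid for $x_0,x_1\in A'$. Fix $\eps$ and the weakly Borel family $\pi^{\eps_0,\eps_1}$ given by the measured-length property. Using the Sobolev inequality defining $S^2(X)$ applied to the test plan $\pi^{\eps_0,\eps_1}$, together with Cauchy--Schwarz, I get
\[
\Big|\int f(\gamma_1)-f(\gamma_0)\,\d\pi^{\eps_0,\eps_1}\Big|\leq\int\!\!\int_0^1|\D f|_X(\gamma_t)|\dot\gamma_t|\,\d t\,\d\pi^{\eps_0,\eps_1}\leq L\Big(\iint_0^1|\dot\gamma_t|^2\,\d t\,\d\pi^{\eps_0,\eps_1}\Big)^{1/2}.
\]
By property (b), the left-hand side equals $\big|\fint_{B_{\eps_1}(x_1)}f\,\d\mm-\fint_{B_{\eps_0}(x_0)}f\,\d\mm\big|$. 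Letting $\eps_0,\eps_1\downarrow0$, the left side converges to $|f(x_1)-f(x_0)|$ because $x_0,x_1$ are Lebesgue points, and by property (c) the right side has $\limsup$ at most $L\,\d(x_0,x_1)$. Hence $|f(x_1)-f(x_0)|\leq L\,\d(x_0,x_1)$ on $A'\times A'$. Since $A'$ has full measure and $\supp\mm=X$, $A'$ is dense, so $f|_{A'}$ is $L$-Lipschitz and extends uniquely to an $L$-Lipschitz $\tilde f$ on $X$; one checks $\tilde f=f$ $\mm$-a.e.\ (again using that $A'$ is a full-measure set of Lebesgue points, so $f$ and $\tilde f$ agree there). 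Then $\Lip(\tilde f)\leq L$, and combined with $L=\esup\,|\D f|_X\leq\Lip(\tilde f)$ we get equality, which is the claim.

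A couple of technical points need care. One must make sure the chain of inequalities above is legitimate: $|\D f|_X$ is only an $L^2$ (indeed $L^\infty$) function, but the test-plan inequality in the definition of $S^2$ is exactly what licenses $\int|f(\gamma_1)-f(\gamma_0)|\,\d\pi\leq\iint_0^1|\D f|_X(\gamma_s)|\dot\gamma_s|\,\d s\,\d\pi$, and bounding $|\D f|_X$ by $L$ pointwise $\mm$-a.e.\ is fine since $(e_s)_\sharp\pi\leq C\mm$. The role of the weak Borel measurability in (a) and of \eqref{eq:constdef} in Remark \ref{rem:constsp} is essentially to guarantee that the family of plans is well-behaved enough that one may pass to the limit along a sequence $\eps_0,\eps_1\downarrow0$ realizing the $\limsup$ in \eqref{eq:limitenergy}; no deep measurable-selection is really needed here since we only take a countable limit for each fixed pair $(x_0,x_1)$.

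The main obstacle, and the only genuinely nontrivial input, is establishing the a.e.\ Lebesgue-point property of $f$ from the a.e.\ locally doubling assumption: the doubling is only \emph{local} and only holds off an $\mm$-negligible set, so one must localize to the open sets $\Omega$ and patch, rather than quote a global maximal-function theorem. Everything else is a soft combination of the test-plan definition of Sobolev functions with the two structural hypotheses, and the bookkeeping of full-measure sets.
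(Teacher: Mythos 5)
Your proposal is correct and follows essentially the same route as the paper: establish the a.e.\ Lebesgue-point property from the local doubling hypothesis, apply the Sobolev test-plan inequality to the plans $\pi^{\eps_0,\eps_1}$ from the measured-length property, use Cauchy--Schwarz and property (c) to pass to the limit and obtain $|f(x_1)-f(x_0)|\leq L\,\d(x_0,x_1)$ on a full-measure set, then extend by density. The only differences are cosmetic (the paper takes $\eps_0=\eps_1=\eps'$, and you spell out the extension and the reverse inequality $L\leq\Lip(\tilde f)$ slightly more explicitly).
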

\begin{proof} It is well known that on doubling spaces, for any given function in $L^1_{loc}$, a.e.\ point is a Lebesgue point. Since the property of being a Lebesgue point is local in nature,  we immediately have that even on a a.e.\ locally doubling space a.e.\ point is a Lebesgue point of a given $L^1_{loc}$ function.

With that said, let $A\subset X$ be the set given in Definition \ref{def-good},  pick $f\in W^{1,2}(X)$  with $L:=\esup~{|\D f|} < \infty$ and let $B\subset X$ the set of its Lebesgue points of $f$.

Pick $x,y\in A\cap B$, let $\eps$ be given by Definition \ref{def-good}, consider $\eps'\in(0,\eps]$ and the test plan $\pi^{\eps',\eps'}$ given by Definition \ref{def-good}. Then we have
\[
\begin{split}
\int|f(\gamma_1)-f(\gamma_0)|\,\d\pi^{\eps',\eps'}(\gamma)&\leq \iint_0^1|\D f|(\gamma_t)|\dot\gamma_t|\,\d t\,\d\pi^{\eps',\eps'}(\gamma)\\
&\leq L \iint_0^1|\dot\gamma_t|\,\d t\,\d\pi^{\eps',\eps'}(\gamma)\leq L\sqrt{ \iint_0^1|\dot\gamma_t|^2\,\d t\,\d\pi^{\eps',\eps'}(\gamma)}
\end{split}
\]
Letting $\eps'\downarrow0$, using the fact that $x,y$ are Lebesgue points and the bound \eqref{eq:limitenergy} we obtain
\[
\begin{split}
|f(y)-f(x)|&=\lim_{\eps'\downarrow0}\left|\int f\,\d(e_0)_\sharp\pi^{\eps',\eps'}-\int f\,\d(e_1)_\sharp\pi^{\eps',\eps'}\right|\\
&=\lim_{\eps'\downarrow0}\left|\int f(\gamma_1)-f(\gamma_0)\,\d\pi^{\eps',\eps'}(\gamma)\right|\\
&\leq \limi_{\eps'\downarrow0}\int |f(\gamma_1)-f(\gamma_0)|\,\d\pi^{\eps',\eps'}(\gamma)\\
&\leq L\limi_{\eps'\downarrow0}\sqrt{ \iint_0^1|\dot\gamma_t|^2\,\d t\,\d\pi^{\eps',\eps'}(\gamma)}\leq L\d(x,y).
\end{split}
\]
This proves that the restriction of $f$ to $A\cap B$ is Lipschitz with Lipschitz constant bounded by $L$. Since $A\cap B$ has full measure the proof is achieved.
\end{proof}
We shall now verify that both the properties of being a.e.\ locally doubling and measured-length pass to warped products. We start with the a.e.\ locally doubling, which is easier.
\begin{proposition}\label{prop:dw}
Let $(X,\d,\mm)$ be an a.e.\ locally doubling space  with finite measure and $w_\d,w_\mm:\to[0,+\infty)$ continuous functions. Then the warped product $(X_w,\d_w,\mm_w)$ is a.e.\ locally doubling as well.
\end{proposition}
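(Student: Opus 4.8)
The plan is to produce the full-measure Borel set demanded by Definition~\ref{def:aedoub} more or less as $B_w:=(\mathrm{int}(I)\cap\{w_\mm>0\})\times B$, where $B\subset X$ is the set witnessing that $X$ is a.e.\ locally doubling, and to verify the doubling inequality at each of its points by comparing $\d_w$-balls of small radius with ``boxes'' $J\times B^X_\rho(y)$, with $J\subset I$ an interval. Concretely, I would fix $(t_0,x_0)$ with $x_0\in B$ and $t_0\in\mathrm{int}(I)$, $w_\mm(t_0)>0$ — so that $w_\d(t_0)>0$ too, by the compatibility condition $\{w_\d=0\}\subseteq\{w_\mm=0\}$ — and use continuity of $w_\d,w_\mm$ to choose $\delta>0$ with $[t_0-2\delta,t_0+2\delta]\subset\mathrm{int}(I)$ and constants $0<c\le C<\infty$ with $c\le w_\d(t),w_\mm(t)\le C$ there. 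Let $\Omega_X\ni x_0$, $C_X\ge1$, $R_X>0$ be the data given by Definition~\ref{def:aedoub} at $x_0$.

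The technical engine is the observation that any curve $\gamma$ in $X_w$ starting in $\{|t-t_0|<\delta\}\times X$ with $l_w[\gamma]<\delta$ keeps its $I$-component inside $(t_0-2\delta,t_0+2\delta)$, where $w_\d$ is pinched between $c$ and $C$: this follows since the projection $\pi^I:(X_w,\d_w)\to I$ is $1$-Lipschitz (as noted after Definition~\ref{def-warped-3}), so $|\gamma^I_u-\gamma^I_0|\le l_w[\gamma|_{[0,u]}]\le l_w[\gamma]<\delta$. Using this, for $r$ below an explicit threshold (so that $2r<\delta$ and the boxes below stay inside $[t_0-2\delta,t_0+2\delta]$) and $q=(s,y)$ with $|s-t_0|<\delta/2$, $y\in\Omega_X$, I would prove the two inclusions
\[
\pi\Big((s-\tfrac r2,s+\tfrac r2)\times B^X_{\beta r}(y)\Big)\ \subseteq\ B^{\d_w}_{r}(q)\ \subseteq\ B^{\d_w}_{2r}(q)\ \subseteq\ \pi\Big((s-2r,s+2r)\times B^X_{2r/c}(y)\Big),\qquad \beta:=\tfrac1{4\max(C,1)},
\]
the right one because $\pi^I$ is $1$-Lipschitz and, by the pinching, $l_w[\gamma]\ge c\int_0^1|\dot\gamma^X_u|\,\d u\ge c\,\d(y,z)$ for curves of length $<\delta$; the left one by joining $(s,y)$ to $(t,z)$ through the curve affine in $I$ and following a near-geodesic of $X$ (available as $X$ is a length space) and estimating $l_w\le|t-s|+C\int_0^1|\dot\gamma^X_u|\,\d u<\tfrac r2+C\beta r<r$ via $\sqrt{a^2+b^2}\le a+b$. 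Since $\mm_w=\pi_*\big((w_\mm\mathcal L^1)\times\mm\big)$ and $c\le w_\mm\le C$ on the relevant interval, these inclusions give $\mm_w(B^{\d_w}_{2r}(q))\le 4Cr\,\mm(B^X_{2r/c}(y))$ and $\mm_w(B^{\d_w}_{r}(q))\ge cr\,\mm(B^X_{\beta r}(y))$, so the doubling ratio at $q$ is at most $\tfrac{4C}{c}\,\mm(B^X_{2r/c}(y))/\mm(B^X_{\beta r}(y))$; as $2r/c$ and $\beta r$ differ by a fixed factor, iterating the local doubling of $X$ a fixed number of times bounds this by a constant depending only on $c,C,C_X$, uniformly in $q\in\Omega:=\pi\big((t_0-\tfrac\delta2,t_0+\tfrac\delta2)\times\Omega_X\big)$ and $r$ small. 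This is the required estimate at $(t_0,x_0)$.

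It then remains to check the bookkeeping: $B_w$ is Borel, and $X_w\setminus B_w$ is $\mm_w$-negligible because $\{w_\mm=0\}$ and $\partial I$ carry no $w_\mm\mathcal L^1$-mass, $X\setminus B$ is $\mm$-negligible, the complement of $\pi(I\times X)$ is $\mm_w$-negligible, and $\mm_w=\pi_*\big((w_\mm\mathcal L^1)\times\mm\big)$, so Fubini applies. I expect the only subtle point to be the localization step forcing the competing curves to stay where $w_\d$ is pinched — everything else is an elementary computation — and the $1$-Lipschitz projection $\pi^I$ together with the restriction to small radii is exactly what handles it. As an alternative one could clamp $w_\d,w_\mm$ outside $[t_0-2\delta,t_0+2\delta]$ to weights bounded away from $0$ and $\infty$, observe as in the proof of Proposition~\ref{prop:samegrad} that the resulting warped product is globally bi-Lipschitz equivalent (in both distance and measure, with controlled constants) to the Cartesian product of $I$ and $X$ and coincides with $X_w$ near $(t_0,x_0)$, and then invoke the trivial fact that a Cartesian product of an interval with an a.e.\ locally doubling space is a.e.\ locally doubling.
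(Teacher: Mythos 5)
Your argument is correct and is essentially the paper's: the published proof is a two-line sketch (take $\hat B:=\{w_\mm>0\}\times B$, note that the cartesian product of a doubling space and an interval is doubling, and conclude by the continuity of $w_\d,w_\mm$), which is precisely the localize-where-the-weights-are-pinched-and-compare-with-the-cartesian-product route you describe as your ``alternative'' in the last sentence. Your main box-comparison computation is simply that sketch carried out explicitly, and it checks out.
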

\begin{proof}
Let $B\subset X$ be the set given in Definition \ref{def:aedoub}, put $\hat B:=\{w_\mm>0\}\times B\subset X_w$ and notice that $\hat B$ has negligible complement. Then recall that $\{w_\mm>0\}\subset \{w_\d>0\}$, notice that it is trivial that the cartesian product of a doubling space and an interval is doubling and conclude using the continuity of $w_\d,w_\mm$. 
\end{proof}
To study the behavior of the measured-length property on warped products, we need to recall some facts about warped product distances. The content of the following lemma is well known, but we provide the simple proof for completeness.
\begin{lemma}\label{le:balls}
Let $(X,\d)$ be a complete and separable length space, $I\subset\R$ a closed, possibly unbounded interval, $w_\d:I\to\R^+$ a continuous function and consider the warped product metric space $(X_w,\d_w)$. 

Then there exists a function $D:I^2\times X\to\R^+$ such that
\begin{equation}
\label{eq:distD}
\d_w\big((t_0,x_0),(t_1,x_1)\big)=D(t_0,t_1,\d(x_0,x_1)),\qquad\forall t_0,t_1\in I,\ x_0,x_1\in X,
\end{equation}
and for every sequence of curves $\gamma_n=(\gamma^I_n,\gamma^X_n)$ joining $(t_0,x_0)$ to $(t_1,x_1)$ whose $\d_w$-length converge to $\d_w\big((t_0,x_0),(t_1,x_1)\big)$ we have that the $\d$-length of the curves $\gamma^X_n$ converge to $\d(x_0,x_1)$.

Finally, for $(t_0,x_0)\in X_w$ with $w_\d(t_0)>0$ and given $\eps>0$ and $t'\in I$ we have that there exists $x'\in X$ with $ (t',x') \in B_\eps((t_0,x_0))$ if and only if $|t'-t|<\eps$ and in this case the set of such $x'$'s is a ball centered at $x_0$ whose $\d$-radius $r(t_0,t',\eps)$ satisfies
\begin{equation}
\label{eq:smallradius}
\lims_{\eps\downarrow 0}\sup_{t'\in [t_0-\eps,t_0+\eps]}r(t_0,t',\eps)=0.
\end{equation}
\end{lemma}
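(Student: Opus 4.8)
The plan is to build everything on the explicit integral formula for the warped length,
\[
l_w[\gamma]=\int_0^1\sqrt{|\dot\gamma^I_t|^2+w_\d^2(\gamma^I_t)|\dot\gamma^X_t|^2}\,\dt ,
\]
which shows at once that the $w_\d$-length of a curve $\gamma=(\gamma^I,\gamma^X)$ depends only on the curve $\gamma^I$ in $I$, on the metric derivative $|\dot\gamma^X_t|$ of $\gamma^X$, and on how the base point moves \emph{in distance}; it does not see the position of $\gamma^X$ in $X$ beyond its speed. First I would make the existence of $D$ precise: given $t_0,t_1\in I$ and $\rho\geq 0$ set
\[
D(t_0,t_1,\rho):=\inf\Big\{\int_0^1\sqrt{|\dot\alpha_t|^2+w_\d^2(\alpha_t)\,\beta(t)^2}\,\dt\ :\ \alpha\in AC([0,1],I),\ \alpha_0=t_0,\alpha_1=t_1,\ \beta\in L^1([0,1]),\ \beta\geq 0,\ \int_0^1\beta\,\dt\geq \rho\Big\},
\]
and observe, using that $X$ is a length space, that $\d_w((t_0,x_0),(t_1,x_1))$ equals this infimum with $\rho=\d(x_0,x_1)$: any competitor $(\alpha,\beta)$ with $\int\beta\geq\d(x_0,x_1)$ can be realized (or approximated, for length spaces) by an honest curve $\gamma^X$ from $x_0$ to $x_1$ with $|\dot\gamma^X_t|=\beta(t)$ after reparametrization, and conversely the length formula shows every curve in $X_w$ from $(t_0,x_0)$ to $(t_1,x_1)$ gives such a competitor with $\int\beta=l[\gamma^X]\geq\d(x_0,x_1)$. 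One also checks $D$ is monotone nondecreasing in $\rho$, which is what makes the "ball" statement work. This gives \eqref{eq:distD}.

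Next I would prove the convergence statement for near-optimal curves. If $\gamma_n$ joins $(t_0,x_0)$ to $(t_1,x_1)$ with $l_w[\gamma_n]\to\d_w((t_0,x_0),(t_1,x_1))=D(t_0,t_1,\d(x_0,x_1))$, write $\rho_n:=l[\gamma^X_n]\geq\d(x_0,x_1)$; then $(\gamma^I_n,|\dot\gamma^X_n|)$ is a competitor for $D(t_0,t_1,\rho_n)$, so $l_w[\gamma_n]\geq D(t_0,t_1,\rho_n)\geq D(t_0,t_1,\d(x_0,x_1))$ by monotonicity. Hence $D(t_0,t_1,\rho_n)\to D(t_0,t_1,\d(x_0,x_1))$, and I claim this forces $\rho_n\to\d(x_0,x_1)$: since $w_\d>0$ on the compact range of the (equibounded) curves $\gamma^I_n$ — here I use $w_\d(t_0)>0$ together with continuity, after first noting that a near-optimal $\gamma^I_n$ cannot stray far from $[t_0,t_1]$ — there is $c>0$ with $w_\d\geq c$ along them, whence $l_w[\gamma_n]\geq c\,\rho_n$; but actually a cleaner route is the strict monotonicity of $\rho\mapsto D(t_0,t_1,\rho)$ on the relevant range (which again follows from $w_\d\geq c>0$ there: increasing $\rho$ by $h$ forces the integral up by at least, roughly, $c h$ when $t_0=t_1$, and in general by a positive amount), which makes $D(t_0,t_1,\cdot)$ a homeomorphism onto its image and yields $\rho_n\to\d(x_0,x_1)$ directly.

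Finally, the ball description. Fix $(t_0,x_0)$ with $w_\d(t_0)>0$, $\eps>0$, $t'\in I$. The trivial inequality $\d_w\geq\d_I$ (seen from the length formula, dropping the $w_\d$ term) gives that $(t',x')\in B_\eps((t_0,x_0))$ implies $|t'-t_0|<\eps$. Conversely $\{x':(t',x')\in B_\eps((t_0,x_0))\}=\{x':D(t_0,t',\d(x_0,x'))<\eps\}$, and since $D(t_0,t',\cdot)$ is nondecreasing this set is $\{x':\d(x_0,x')<r(t_0,t',\eps)\}$ for $r(t_0,t',\eps):=\sup\{\rho\geq 0:D(t_0,t',\rho)<\eps\}$, a ball centered at $x_0$ — nonempty precisely when $D(t_0,t',0)=|$distance in $I$ with weight$|<\eps$, which for small $\eps$ holds iff $|t'-t_0|<\eps$. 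For \eqref{eq:smallradius}: for $t'\in[t_0-\eps,t_0+\eps]$ and $\rho=r(t_0,t',\eps)$, take a competitor $(\alpha,\beta)$ with integral $<\eps$; with $c:=\min_{[t_0-1,t_0+1]}w_\d>0$ (valid once $\eps<1$, after checking near-optimal $\alpha$ stays in $[t_0-1,t_0+1]$) we get $\eps>\int_0^1 w_\d(\alpha_t)\beta(t)\,\dt\geq c\int_0^1\beta\,\dt\geq c\rho$, so $r(t_0,t',\eps)\leq\eps/c\to 0$ uniformly in $t'$, giving \eqref{eq:smallradius}.

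I expect the main obstacle to be the careful handling of the two genuinely metric points behind \eqref{eq:distD} and the near-optimal-curve claim: first, that competitors in the variational definition of $D$ — pairs $(\alpha,\beta)$ — are faithfully matched with actual curves in $X_w$, which on a length (not geodesic) space requires an approximation argument to produce, from a scalar speed profile $\beta$ with $\int\beta\geq\d(x_0,x_1)$, a curve $\gamma^X$ from $x_0$ to $x_1$ with $|\dot\gamma^X|$ no larger than $\beta$; and second, the uniform lower bound $w_\d\geq c>0$ along \emph{near}-optimal curves, which needs an a priori confinement of $\gamma^I_n$ to a compact subinterval (obtained by comparing $l_w[\gamma_n]$, which is bounded, with the $\d_I$-length of $\gamma^I_n$) before continuity of $w_\d$ and $w_\d(t_0)>0$ can be invoked.
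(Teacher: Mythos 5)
Your proof is correct and follows essentially the same strategy as the paper's: both reduce $\d_w$ to a one-dimensional variational problem over curves in $I$ that depends on $x_0,x_1$ only through $\d(x_0,x_1)$ (the paper by reparametrizing $\gamma^X$ to constant speed, you by allowing a general speed profile $\beta$ with $\int_0^1\beta\,\d t\geq\rho$), and both derive \eqref{eq:smallradius} from the lower bound $\int_0^1 w_\d(\alpha_t)\beta(t)\,\d t\geq c\rho$. The only point to repair is the constant $c:=\min_{[t_0-1,t_0+1]}w_\d$, which need not be positive under the sole assumption $w_\d(t_0)>0$: as in the paper, take the infimum of $w_\d$ over $[t_0-\eps,t_0+\eps]$ (or over a small neighbourhood of $t_0$ on which continuity keeps $w_\d$ bounded away from $0$) instead.
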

\begin{proof}
Fix $t_0,t_1\in I$, $x_0,x_1\in X$ and let $\Gamma_w\subset C([0,1],X_w)$ be the set of absolutely continuous curves joining $(t_0,x_0)$ to $(t_1,x_1)$ and $\Gamma\subset C([0,1],I)$ the set of absolutely continuous curves joining $t_0$ to $t_1$. Also, define $L_w:\Gamma_w\to\R^+$ and $L:\Gamma\to\R^+$ as
\[
\begin{split}
L_w((\gamma^I,\gamma^X))&:=\int_0^1\sqrt{|\dot\gamma^I_s|^2+w_\d^2(\gamma^I_s)|\dot\gamma^X_s|^2}\,\d s,\\
L(\gamma^I)&:=\int_0^1\sqrt{|\dot\gamma^I_s|^2+w_\d^2(\gamma^I_s)\d^2(x_0,x_1)}\,\d s
\end{split}
\]
and notice that $L_w$ is invariant under reparametrization. We claim that 
\begin{equation}
\label{eq:forD}
\inf_{\gamma\in \Gamma_w}L_w(\gamma)=
\inf_{\gamma^I\in\Gamma}L(\gamma^I).
\end{equation}
Indeed, to get inequality $\leq$ find a sequence of curves $\gamma^X_n$ joining $x_0$ to $x_1$ parametrized with constant speed and whose length converges to $\d(x_0,x_1)$. Then for every $\gamma^I\in\Gamma$ consider the curves $\gamma_n:=(\gamma^I,\gamma^X_n)\in\Gamma_w$ and notice that $\lim_nL_w(\gamma_n)=L(\gamma^I)$.

To prove $\geq$, pick $\gamma=(\gamma^I,\gamma^X)\in\Gamma_w$ and up to a small perturbation which does not alter $L_w$ much, assume that the curve $\gamma^X$ has always positive speed. Then let $\tilde\gamma=(\tilde\gamma^I,\tilde\gamma^X)\in\Gamma_w$ be the reparametrization of $\gamma$ chosen so that $\tilde\gamma^X$ has constant speed, call it $\ell$. Then we have $\ell\geq \d(x_0,x_1)$ and thus
\[
L_w(\gamma)=L_w(\tilde\gamma)=\int_0^1\sqrt{|\dot{\tilde\gamma}^I_s|^2+w_\d^2(\gamma^I_s)\ell^2}\,\d s\geq \int_0^1\sqrt{|\dot{\tilde\gamma}^I_s|^2+w_\d^2(\gamma^I_s)\d^2(x_0,x_1)}\,\d s= L(\tilde\gamma^I),
\]
concluding the proof of \eqref{eq:forD}.

The identity \eqref{eq:forD} gives the existence of the function $D$ claimed in the statement, because the left-hand side of \eqref{eq:forD} equals $\d_w\big((t_0,x_0),(t_1,x_1)\big)$ while the right-hand side depends on $t_0,t_1$ and $\d(x_0,x_1)$ only.

The same arguments just used also yield the claim about the convergence of the $\d$-length of the curves $\gamma^X_n$.

Concerning the last statement, notice that $\d_w((t_0,x_0),(t',x'))\geq |t_0-t'|$ for every $t_0,t'\in I$ and $x_0,x'\in X$ and that the equality holds if $x_0=x'$. This addresses the claim about the existence of $x'$. The fact that the set of such $x'$'s is a ball follows directly from \eqref{eq:distD} so that it remains to prove  \eqref{eq:smallradius}. Notice that $r(t_0,t',\eps)$ is characterized by the identity
\[
D\big(t_0,t',r(t_0,t',\eps)\big)=\eps
\]
and that from \eqref{eq:forD} and the definition of $L$ we see, choosing $\gamma_s^I:=(1-s)t_0+st'$, that
\[
D\big(t_0,t',r(t_0,t',\eps)\big)\geq \sqrt{|t_0-t'|^2+|r(t_0,t',\eps)|^2\inf_{[t_0-\eps,t_0+\eps]}w^2_\d}\geq r(t_0,t',\eps)\inf_{[t_0-\eps,t_0+\eps]}w_\d.
\] 
Hence
\[
r(t_0,t',\eps)\leq\frac{ \eps }{\inf_{[t_0-\eps,t_0+\eps]}w_\d}
\]
and the conclusion follows by the continuity of $w_\d$ and the assumption $w_\d(t_0)>0$.
\end{proof}

We turn to the proof that the warped product of an interval and a measured-length space is still measured-length. Unfortunately, the argument is a bit tedious: in the course of the proof, after having introduced some key objects, we shall explain what is the basic idea for the construction.

\begin{proposition}\label{prop:mlw}
Let $(X,\d,\mm)$ be a measured-length space of finite mass, $I\subset \R$ a closed, possibly unbounded, interval and $w_\d,w_\mm:I\to[0,+\infty)$  continuous  functions. Assume that $w_\mm$ is strictly positive in the interior of $I$.

Then the warped product $(X_w,\d_w,\mm_w)$ is a measured-length space as well.
\end{proposition}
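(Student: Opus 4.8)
The plan is to lift the measured-length structure of $X$ to $X_w$ by building the required test plans on the product from the ones available on $X$ together with an explicit $\d_w$-geodesic (or near-geodesic) in the interval direction. Fix the Borel set $A\subset X$ of full measure coming from the measured-length property of $X$, and declare the good set in $X_w$ to be $\hat A:=\mathrm{int}(I)\times A$, which has $\mm_w$-negligible complement since $w_\mm>0$ on $\mathrm{int}(I)$ and $A^c$ is $\mm$-negligible. Given two points $(t_0,x_0),(t_1,x_1)\in\hat A$, by Lemma \ref{le:balls} there is a function $D$ with $\d_w((t_0,x_0),(t_1,x_1))=D(t_0,t_1,\d(x_0,x_1))$, and minimizing curves in $X_w$ have $X$-components whose $\d$-length tends to $\d(x_0,x_1)$; I will use this to fix, for each $\delta>0$, a curve $\gamma^I=\gamma^{I,\delta}\in C([0,1],I)$ joining $t_0$ to $t_1$ together with a reparametrization such that $\int_0^1\sqrt{|\dot\gamma^I_s|^2+w_\d^2(\gamma^I_s)\d^2(x_0,x_1)}\,\d s\le \d_w((t_0,x_0),(t_1,x_1))+\delta$.

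\textbf{Construction of the plan.} For small $\eps_0,\eps_1>0$ I want a test plan on $C([0,1],X_w)$ whose endpoints are the normalized restrictions of $\mm_w$ to the $\d_w$-balls $B_{\eps_0}((t_0,x_0))$ and $B_{\eps_1}((t_1,x_1))$. The idea is: run the interval component along $\gamma^{I,\delta}$ (rigidly, the same curve for $\mm$-a.e.\ base point), and run the $X$ component along the curves delivered by the measured-length property of $X$ connecting balls around $x_0$ to balls around $x_1$; the speed of the $X$ component must be time-reparametrized so that the warped kinetic energy $\int_0^1(|\dot\gamma^I_s|^2+w_\d^2(\gamma^I_s)|\dot\gamma^X_s|^2)\,\d s$ is controlled by $L(\gamma^{I,\delta})^2$. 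Concretely: from Lemma \ref{le:balls}, for $t'$ near $t_0$ the slice $\{x':(t',x')\in B_\eps((t_0,x_0))\}$ is a $\d$-ball $B_{r(t_0,t',\eps)}(x_0)$ with $r\to0$ as $\eps\to0$, and similarly at $t_1$; so I pick $\eps_0',\eps_1'$ so that $B_{\eps_0'}(x_0)$, $B_{\eps_1'}(x_1)$ sit inside these slices, take the $X$-plan $\pi^{\eps_0',\eps_1'}_X$ from Definition \ref{def-good}, pair $\gamma\mapsto((\gamma^{I,\delta}),(\gamma^X))$ and push forward, then handle the mismatch between "normalized on a $\d$-ball in $X$ at fixed height $t_0$" and "normalized on a $\d_w$-ball in $X_w$" by disintegrating $\mm_w$ over the interval coordinate (its density is $w_\mm\,\mm$) and integrating the plans over $t'$; the weak Borel measurability in $(\eps_0,\eps_1)$ is inherited from that of $\pi^{\eps_0',\eps_1'}_X$ and the continuity of all the comparison constants. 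The bounded compression of the resulting plan follows since $w_\mm$ is locally bounded above and below on the relevant compact part of $\mathrm{int}(I)$ and since the $X$-plans have bounded compression.

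\textbf{The energy estimate and the main obstacle.} The heart of the matter is verifying $\lims_{\eps_0,\eps_1\downarrow0}\iint_0^1|\dot\gamma_t|^2_w\,\d t\,\d\pi^{\eps_0,\eps_1}\le \d_w^2((t_0,x_0),(t_1,x_1))$. Along $\gamma=(\gamma^{I,\delta},\gamma^X)$ one has $|\dot\gamma_s|_w^2=|\dot\gamma^{I,\delta}_s|^2+w_\d^2(\gamma^{I,\delta}_s)|\dot\gamma^X_s|^2$; after using Remark \ref{rem:constsp} to make the $X$-plans constant-speed with $\int|\dot\gamma^X_s|^2\,\d\pi_X\to\d^2(x_0,x_1)$, one must reparametrize the joint curve so that the two speed contributions combine to at most $L(\gamma^{I,\delta})^2$ — this is exactly the elementary computation behind the inequality $\ge$ in \eqref{eq:forD}, now carried out at the level of plans rather than single curves, and it is here that the uniformity of the convergence $r(t_0,t',\eps)\to0$ from \eqref{eq:smallradius} is needed so that the ball-normalization discrepancy contributes only a vanishing error. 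So the chain is: energy $\le L(\gamma^{I,\delta})^2 + o(1) \le (\d_w((t_0,x_0),(t_1,x_1))+\delta)^2 + o(1)$ as $\eps_0,\eps_1\downarrow0$, and then $\delta\downarrow0$. I expect the main obstacle to be purely bookkeeping: making the reparametrization of the $X$-component depend measurably on the base point while keeping the endpoint marginals \emph{exactly} the prescribed normalized indicators (rather than merely approximately), which forces the disintegration-over-$t'$ device above and a careful check that averaging plans over $t'\in(t_0-\eps,t_0+\eps)$ with the correct weights reproduces $\tfrac{1_{B_{\eps_0}((t_0,x_0))}}{\mm_w(B_{\eps_0}((t_0,x_0)))}\mm_w$; the analytic content (the energy inequality) is comparatively soft once \eqref{eq:forD} and \eqref{eq:smallradius} are in hand.
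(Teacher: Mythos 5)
Your overall architecture matches the paper's: take $\hat A=\mathring I\times A$, disintegrate the normalized ball measures over the interval coordinate (using Lemma \ref{le:balls} to see that the slices are $\d$-balls in $X$ with radii controlled by \eqref{eq:smallradius}), connect the fibers with the plans supplied by Definition \ref{def-good}, pair with a near-geodesic in the $I$-direction, and remove the geodesic assumption by a diagonal argument. The endpoint-marginal matching and the energy estimate, which you flag as the main obstacles, are indeed handled in the paper essentially as you sketch them and are not where the difficulty lies.

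The genuine gap is the bounded-compression requirement, which you dispose of in one sentence (``follows since $w_\mm$ is locally bounded above and below \dots and since the $X$-plans have bounded compression''). This does not follow. For each $t$ in the disintegration you invoke a plan $\pi^{f_0(t),f_1(T(t))}$ whose intermediate-time marginals are bounded by $C(t)\,\mm$ for \emph{some} constant $C(t)$, but Definition \ref{def-good} gives no uniformity of $C(t)$ in the radii, hence none in $t$: the function $t\mapsto C(t)$ may be unbounded on $[t_0-\eps_0,t_0+\eps_0]$. When you integrate these plans against the interval marginal $\nu_0$, the time-$s$ density of the glued plan at a point with interval coordinate coming from $t$ factorizes as (interval density at $t$) $\times$ (density of $(e_s)_\sharp\pi^{f_0(t),f_1(T(t))}$), and the second factor is only bounded by $C(t)$; so the glued plan need not have bounded compression and need not be a test plan. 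This is precisely the obstruction the paper's proof is built around: it inserts two preliminary stages $\pi_0,\pi_1$ that transport the interval marginals $\nu_0,\nu_1$ onto new densities $\eta_0,\eta_1$ manufactured from $u:=\min\{u_0,u_1,1/(C\circ F_0^{-1})\}$ via \eqref{eq:defc}, exactly so that $\eta_0(t)\,C(T_0^{-1}(t))$ is uniformly bounded (by $c^{-1}$), and then checks via the convexity of densities along one-dimensional optimal transport that these preliminary stages themselves have bounded compression and only cost $O(\eps_{01}^2)$ of kinetic energy. Without this step, or some substitute for it, your construction does not produce a test plan, so the proof is incomplete at its most delicate point.
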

\begin{proof} \ \\
\noindent{\bf Step 1: set up of the construction.} Let $A\subset X$ be the set given in the definition of measured-length space, $\mathring I$ the interior of $I$ and put $\hat A:=\mathring I\times A\subset X_w$. Notice that $\hat A$ has full $\mm_w$-measure and fix $(t_0,x_0),(t_1,x_1)\in \hat A$.

We assume for the moment that there is a $\d_w$-geodesic $\gamma=(\gamma^I,\gamma^X)$ connecting $(t_0,x_0)$ to $(t_1,x_1)$ such that the image of $\gamma^I$, which we shall call $K$, is contained in $\mathring I$. Without loss of generality, we shall assume that $\gamma$ has constant speed, so that
\[
\int_0^1|\dot\gamma_s|^2\,\d s=\d_w^2\big((t_0,x_0),(t_1,x_1)\big).
\]
Since $K$ is compact, for some $\delta>0$ its $\delta$-neighborhood $K_\delta$  is still contained in  $\mathring I$ and thus the quantities $\inf_{K_{\delta/2}} w_\d$ and $\inf_{K_{\delta/2}} w_\mm$ are strictly positive. 

As will be clear in a moment, all the objects that we are going to build will live in  $K_{\delta/2}\times X\subset X_w$ and since the role of the reference measure $\mm_w$ is only for the $L^\infty$-bound in (c) of Definition \ref{def-good}, and since we have 
\[
\mm_c\restr{K_{\delta/2}\times X}\leq \frac{\mm_w\restr{K_{\delta/2}\times X}}{\inf_{K_{\delta/2}}w_\mm}
\] without loss of generality, we may replace $\mm_w$ by   $\mm_c$.

Let $ \eps$ be the number given  in Definition \ref{def-good} related to the space $X$ and the points $x_0,x_1\in A$, put $\hat\eps:=\min\{\frac\delta4,\eps\inf_{K_{\delta/2}}w_\d\}$ and fix $\eps_0,\eps_1\in(0,\hat\eps]$. Most of the objects we shall define from now on will depend on $\eps_0,\eps_1$, but to keep the notation simple we shall often avoid explicitly referring to them.

Consider the two measures
\[
\mu_{0} = \frac{1_{B_{\eps_0}(t_0,x_0)}}{\mm_c(B_{\eps_0}(t_0,x_0))}\,\mm_c,\qquad\qquad\qquad\qquad \mu_{1} = \frac{1_{B_{\eps_1}(t_1,x_1)}}{\mm_c(B_{\eps_1}(t_1,x_1))}\,\mm_c,
\]
put $\nu_i:=\pi^I_\sharp\mu_i\in\mathcal P(I)$ and let $\{\mu_{i,t}\}_{t\in I}$ be the disintegration of $\mu_i$ w.r.t.\ $\pi^I$, $i=0,1$. We shall think to the measures $\mu_{i,t}$ as measures on $(X,\d,\mm)$ so that the construction and Lemma \ref{le:balls} ensure that
\[
\mu_{i,t}=\frac{1_{B_{f_i(t)}(x_i)}}{\mm(B_{f_i(t)}(x_i))}\mm,\qquad\qquad \forall t\in{\rm spt}(\nu_i)=[t_i-\eps_i,t_i+\eps_i],\quad i=0,1,
\]
for some functions $f_0,f_1$ which, due to the choice of $\hat \eps,\eps_0,\eps_1$ and the fact that we are considering those balls in the non-rescaled space $(X,\d,\mm)$, satisfy $f_0(t),f_1(t)\leq \eps$ for every $t$ in the respective domain of definition. Notice that inequality \eqref{eq:smallradius} gives
\begin{equation}
\label{eq:fi}
\lims_{\eps_i\downarrow0}\sup_{t\in[t_i-\eps_i,t_i+\eps_i]}f_i(t)=0,\qquad\text{ for }i=0,1.
\end{equation}
Observe that $\nu_0,\nu_1\ll\mathcal L^1$, let   $T:I\to I$ be the optimal transport map from $\nu_0$ to $\nu_1$ and for  $t\in[t_0-\eps_0,t_0+\eps_0]$ consider  the plan $\pi^{f_0(t),f_1(T(t))}\in\mathcal P(C([0,1],X))$ joining $\mu_{0,t}$ to $\mu_{1,T(t)}$ whose existence is ensured by Definition \ref{def-good} and the fact that $f_0,f_1\leq\eps $. According to Remark \ref{rem:constsp} we can also assume that 
\begin{equation}
\label{eq:constspe}
s\quad\mapsto\quad\int |\dot\gamma_s|^2\,\d\pi^{f_0(t),f_1(T(t))}(\gamma)\qquad\text{ is constant, call it  }{\rm Sp}^2(t).
\end{equation}
Then we also know that for some Borel function $t\mapsto C(t)\geq1$ we have
\begin{equation}
\label{eq:bounddenst}
(e_s)_\sharp\pi^{f_0(t),f_1(T(t))}\leq C(t)\mm,\qquad\forall s\in[0,1]
\end{equation}
and point $(c)$ in Definition \ref{def-good}, \eqref{eq:fi} and \eqref{eq:constspe} ensure that
\begin{equation}
\label{eq:sppi}
\lims_{\eps_0,\eps_1\downarrow0}\sup_{t\in[t_0-\eps_0,t_0+\eps_0]}{\rm Sp}^2(t)\leq \d^2(x_0,x_1).
\end{equation}
{\bf Interlude: idea of the construction} We shall build a test plan joining $\mu_0$ to $\mu_1$ in a way that resembles Knothe's rearrangement, where - informally speaking - we rearrange the mass on the $I$-component according to the geodesic from $\nu_0$ to $\nu_1$ and, for fixed $t$, the disintegrated measure $\mu_{0,t}$ is sent to $\mu_{1,T(t)}$ via the plan $\pi^{f_0(t),f_1(T(t))}$. This will almost give the desired answer, meaning that the plan built this way would satisfy $(a),(b),(c)$ in Definition \ref{def-good}. 

However, in general the resulting plan won't be a test plan because in principle it might not be of bounded compression:  the problem is that we don't have any form of control on  the constant $C(t)$ in \eqref{eq:bounddenst} in terms of $t$ so that when the plans $\pi^{f_0(t),f_1(T(t))}$ are `glued together' we can't be sure of having a uniform bound on the densities of the marginals.

To overcome this problem, before interpolating in the $X$-variable we shall first slightly modify $\mu_0,\mu_1$ into measures having $I$-marginals small enough to compensate the `exploding' quantity $C(t)$. We can do this with only a minor loss of kinetic energy, so that the new plan will still satisfy $(c)$ in Definition \ref{def-good}, thus leading to the conclusion.\\
{\bf Back to Step 1}
For $\nu_0$-a.e.\ $t$ let  $\mathfrak s:[0,1]\to [0,1]$ be defined as
\[
\mathfrak s(s):=\Big(\int_0^1|\dot\gamma^X_r|\,\d r\Big)^{-1}\int_0^s|\dot\gamma^X_r|\,\d r,
\] 
(if $\gamma^X$ is constant we take $\mathfrak s$ to be the identity) and let   $\bar{\mathfrak s}$ be the  map from $C([0,1],X_w)$ to itself sending  $\gamma$ to $\gamma\circ \mathfrak s$. Also, put $\sigma^t:=\bar{\mathfrak s }_\sharp\pi^{f_0(t),f_1(T(t))}$ and notice that the identity $|\dot{\gamma\circ\mathfrak s}_s|=|\mathfrak s'|(s)|\dot\gamma_{\mathfrak s}|$ and \eqref{eq:constspe} give that
\begin{equation}
\label{eq:lt}
\int |\dot\gamma_s|^2\,\d\sigma^t(\gamma)=\frac{\text{Sp}^2(t)}{(\int_0^1|\dot\gamma^X_s|\,\d s)^2}|\dot\gamma^X_s|^2,\qquad\ a.e.\ s\in[0,1].
\end{equation}
Now let $F_i:[t_i-\eps_i,t_i+\eps_i]\to[0,1]$ be the cumulative distribution function of $\nu_i$ given by $F_i(t):=\nu_i((-\infty,t])$, $i=0,1$, and consider the functions $u_i:[0,1]\to\R^+$ defined as $u_i:=\rho_i\circ F_i^{-1}$, where $\rho_i$ is the density of $\nu_i$, $i=0,1$.

Introduce also $\bar u:[0,1]\to\R^+$ as $\bar u:=\frac1{C\circ F_0^{-1}}$ (recall that $t\mapsto C(t)\geq 1$ was defined in \eqref{eq:bounddenst}), put $u:=\min\{u_0,u_1,\bar u\}$ and define $v:[0,1]\to [0,\max\{\eps_0,\eps_1\}]$ as 
\begin{equation}
\label{eq:defc}
v(t):=c\int_0^t\frac 1{u(s)}\,\d s,\qquad\text{ with }\qquad c:=\frac{\max\{\eps_0,\eps_1\}}{\int_0^1\frac 1{u(s)}\,\d s}.
\end{equation}
Then $v$ is invertible and its inverse $v^{-1}:[0,\max\{\eps_0,\eps_1\}]\to [0,1]$ is absolutely continuous and satisfies 
\begin{equation}
\label{eq:vmu}
(v^{-1})'=\frac{u\circ v^{-1}}c.
\end{equation}
Define the functions $\eta_0,\eta_1:I\to\R^+$ as
\[
\eta_i(t):=\left\{
\begin{array}{ll}
(v^{-1})'(t-t_i),&\qquad\text{ if }t\in[t_i,t_i+\max\{\eps_0,\eps_1\}],\\
0,&\qquad \text{ otherwise}
\end{array}
\right.
\]
and notice that by constructions these are probability densities.

\noindent{\bf Step 2: definition of the interpolation}. The interpolation will be built in 3 separate steps.

Let $T_0$ be the optimal transport map from $\nu_0$ to $\eta_0\mathcal L^1$, define $\hat T_0:I\times X\to C([0,1],X_w)$ as
\[
\hat T_0(t,x)_s:=\big((1-s)t+sT_0(t),x\big)
\]
and put
\[
\pi_0:=(\hat T_0)_\sharp\mu_0.
\]
Notice that we trivially have $(e_0)_\sharp\pi_0=\mu_0$. Similarly, considering the optimal map $T_1$ from $\nu_1$ to $\eta_1\mathcal L^1$ and the induced map $\hat T_{1}:I\times X\to C([0,1],X_w)$ given by $\hat T_{1}(t,x)_s:=((1-s)T_1(t)+s t,x)$, we put
\[
\pi_1:=(\hat T_1)_\sharp\mu_1,
\]
and, much like before, we have $(e_1)_\sharp\pi_1=\mu_1$. We shall now build a plan interpolating from $(e_1)_\sharp\pi_0$ to $(e_0)_\sharp\pi_1$.  
Recalling that the curve $\gamma^I$ was previously introduced, we define the map
\[
\begin{split}
\mathcal G:I\times C([0,1],X)&\quad \to\quad C([0,1],X_w),\\
(t,\gamma)&\quad\mapsto \quad s\to \mathcal G(t,\gamma)_s:=(T_0(t)+\gamma^I_s-t_0,\gamma_s).
\end{split}
\]
Then we consider the plan $\sigma\in\mathcal P(I\times C([0,1],X))$ given by
\[
\d\sigma(t,\gamma):=\d\nu_0( t)\times\d \sigma^t(\gamma),
\]
and put 
\[
\pi_{mid}:=\mathcal G_\sharp\sigma\quad \in P(C([0,1],X_w)).
\]
We claim that 
\begin{equation}
\label{eq:toint}
(e_0)_\sharp\pi_{mid}=(e_1)_\sharp\pi_0\qquad\text{ and }\qquad (e_1)_\sharp\pi_{mid}=(e_0)_\sharp\pi_1.
\end{equation}
To see the first, fix a bounded Borel function $\varphi:X_w\to\R$ and notice that
\begin{equation}
\label{eq:lcomp}
\begin{split}
\int\varphi(t,x)\,\d(e_0)_\sharp\pi_{mid}(t,x)&=\iint\varphi(\mathcal G(t,\gamma)_0)\,\d\sigma^t(\gamma)\,\d\nu_0(t)\\
&=\iint\varphi(T_0(t),\gamma_0)\,\d\sigma^t(\gamma)\,\d\nu_0(t)\\
&=\iint\varphi(T_0(t),x)\,\d(e_0)_\sharp\sigma^t(x)\,\d\nu_0(t)\\
&=\iint\varphi(T_0(t),x)\,\d\mu_{0,t}(x)\,\d\nu_0(t)\\
&=\iint\varphi(T_0(t),x)\,\d\mu_{0}(t,x)\\
&=\iint\varphi(\hat T_0(t,x)_1)\,\d\mu_{0}(t,x)=\int\varphi(t,x)\,\d(e_1)_\sharp\pi_0(t,x)
\end{split}
\end{equation}
The second in \eqref{eq:toint} follows by an analogous computation taking into account that  $t\mapsto T_1^{-1}(T_0(t)+t_1-t_0)$ is the optimal map $t\mapsto T(t)$ from $\nu_0$ to $\nu_1$ (because in 1-$d$ `optimal=monotone').

The compatibility conditions \eqref{eq:toint} and a gluing argument ensure the existence of a plan $\pi_{0mid1}\in \mathcal P(C([0,3],X_w))$ such that
\[
({\rm Res}_{[0,1]})_\sharp\pi_{0mid1}=\pi_0,\quad\text{  }\quad
({\rm Res}_{[1,2]})_\sharp\pi_{0mid1}=\pi_{mid},\quad\text{  }\quad
({\rm Res}_{[2,3]})_\sharp\pi_{0mid1}=\pi_1,
\]
where for $[a,b]\subset [0,3]$ the map ${\rm Res}_{[a,b]}:C([0,3],X_w)\to C([a,b],X_w)$ sends a curve to its restriction on $[a,b]$.

Finally, putting $\eps_{01}:=\max\{\eps_0,\eps_1\}$ we define the scaling map ${\rm Scal}:C([0,3],X_w)\to C([0,1],X_w)$ as
\[
{\rm Scal}(\gamma)_s:=\left\{
\begin{array}{ll}
\gamma_{s\eps_{01}^{-1}},&\qquad\text{ for }s\in[0,\eps_{01}],\\
\gamma_{1+\frac{s-\eps_{01}}{1-2\eps_{01}}},&\qquad\text{ for }s\in[\eps_{01},1-\eps_{01}],\\
\gamma_{2+(s+\eps_{01}-1)\eps_{01}^{-1}},&\qquad\text{ for }s\in[1-\eps_{01},1],\\
\end{array}
\right.
\]
and put
\[
\pi:={\rm Scal}_\sharp\pi_{0mid1}.
\]

\noindent{\bf Step 3: estimate of the density.} To prove that $\pi$ has bounded compression it is sufficient to show that  for every $s\in[0,1]$ it holds
\begin{equation}
\label{eq:claimdens}
(e_s)_\sharp\pi_i\leq \frac{\max\{c,1\}}{\mm_c(B_{\eps_i}(x_i))}\mm,\quad\text{for}\quad i=0,1\qquad\text{and}\qquad
(e_s)_\sharp\pi_{mid}\leq c\mm,
\end{equation}
where $c$ is the constant defined in \eqref{eq:defc}. 

We  start with the bound for $\pi_0$. Recall that $T_0$ is the optimal transport map from $\nu_0$ to $\eta_0\mathcal L^1$, thus letting $T_{0,s}(t):=(1-s)t+sT_0(t)$ and denoting by $\rho_s$ the density of $(T_{0,s})_\sharp\nu_0$, the general theory of optimal transport ensures that $s\mapsto \rho_s(T_{0,s}(t))$ is convex for $\nu_0$-a.e.\ $t$. In particular  we have
\begin{equation}
\label{eq:rhosd}
\rho_s(T_{0,s}(t))\leq\max\{\rho_0(t),\eta_0(T_0(t))\},\qquad\nu_0-a.e.\ t.
\end{equation}
Let $G_0:I\to[0,1]$ be the cumulative distribution function of $\eta_0\mathcal L^1$, i.e.\ $G_0(t):=\int_{-\infty}^t\eta_0\,\d\mathcal L^1$ and notice that by definition of $\eta_0$ we have $G(t)=v^{-1}(t-t_0)$, so that  property \eqref{eq:vmu} gives
\begin{equation}
\label{eq:nz}
\eta_0(t)=c^{-1}\,u(G_0(t)),
\end{equation}
and by the definition of $u$ we have 
\[
\eta_0(t)= c^{-1} \, u(G_0(t))\leq c^{-1}\,u_0(G_0(t))=c^{-1}\,\rho_0(F_0^{-1}(G_0(t))),\qquad\eta_0\mathcal L^1-a.e.\ t.
\]
Since the optimal map $T_0$ is the inverse of $F_0^{-1}\circ G_0$, from \eqref{eq:rhosd} we get that 
\[
\rho_s(T_{0,s}(t))\leq\max\{ c^{-1},1\}\,\rho_0(t),\qquad\rho_0\mathcal L^1-a.e.\ t,
\]
for every $s\in[0,1]$. Now notice that with computations similar to those in \eqref{eq:lcomp} we see that
\[
\d(e_s)_\sharp\pi(t,x)=\rho_s(t)\d t\times\d\mu_{0,T_{0,s}^{-1}(t)}(x),
\] 
in particular $(e_s)_\sharp\pi\ll\mm_c$ and for its density we have
\[
\begin{split}
\frac{\d(e_s)_\sharp\pi_0}{\d\mm_c}((T_{0,s}(t),x)&=\rho_s(T_{0,s}(t))\frac{\d\mu_{0,t}}{\d\mm}(x)\\
& \leq \max\{ c^{-1},1\}\rho_0(t)\frac{\d\mu_{0,t}}{\d\mm}(x)
=\max\{ c^{-1},1\}\frac{\d\mu_0}{\d\mm_c}(t,x)\leq \frac{\max\{ c^{-1},1\}}{\mm_c(B_{\eps_0}(x_0))},
\end{split}
\]
for every $s\in[0,1]$ so that in this case the claim \eqref{eq:claimdens} is proved. The bound for $\pi_1$ is obtained in the same way, so we turn to the one on $\pi_{mid}$.

To this aim, start noticing that from the identity 
\[
\begin{split}
\int\varphi(t,s)\,\d(e_s)_\sharp\pi_{mid}&=\int\left(\int\varphi(T_0(t)+\gamma^I_s-t_0,\gamma_s)\,\d\sigma^t(\gamma)\right)\,\d\nu_0(t)\\
&=\int\left(\int\varphi(T_0(t)+\gamma^I_s-t_0,\gamma_s)\,\d\pi^{f_0(t),f_1(T(t))}(\gamma)\right)\,\d\nu_0(t)\\
&=\int\left(\int\varphi(t+\gamma^I_s-t_0,x)\,\d(e_s)_\sharp\pi^{f_0(T_0^{-1}(t)),f_1(T(T_0^{-1}(t)))}(x)\right)\eta_0(t)\,\d t
\end{split}
\]
valid for any bounded Borel function $\varphi$, we see that $(e_s)_\sharp\pi_{mid}\ll\mm_c$ and
\[
\frac{\d(e_s)_\sharp\pi_{mid}}{\d\mm_c}(t-t_0+\gamma^I_s,x)=\eta_0(t)\frac{\d(e_s)_\sharp\pi^{f_0(T_0^{-1}(t)),f_1(T(T_0^{-1}(t)))}}{\d\mm}(x),\qquad\eta_0\mathcal L^1\times\mm-a.e.\ (t,x).
\]
Recalling the bound \eqref{eq:bounddenst} we therefore obtain
\[
\frac{\d(e_s)_\sharp\pi_{mid}}{\d\mm_c}(t-t_0+\gamma^I_s,x)\leq \eta_0(t)C(T_0^{-1}(t)),\qquad\eta_0\mathcal L^1\times\mm-a.e.\ (t,x).
\]
Now notice that  \eqref{eq:nz} and the definition of $u$ give
\[
\eta_0(t)\leq \frac{c^{-1}}{C(F_0^{-1}(G_0(t)))},
\]
and since $T_0^{-1}=F_0^{-1}\circ G_0$, these last two bounds give our claim.

\noindent{\bf Step 4: estimate of the kinetic energy.}  Notice that by the very definition of $\pi$ we have
\begin{equation}
\label{eq:ktot}
\begin{split}
\iint_0^1|\dot\gamma_s|^2\,\d s\,\d\pi(\gamma)&=\frac{1}{\eps_{01}}\iint_0^1|\dot\gamma_s|^2\,\d s\,\d\pi_0(\gamma)+\frac1{\eps_{01}}\iint_0^1|\dot\gamma_s|^2\,\d s\,\d\pi_1(\gamma)\\
&\qquad\qquad+\frac{1}{1-2\eps_{01}}\iint_0^1|\dot\gamma_s|^2\,\d s\,\d\pi_{mid}(\gamma)
\end{split}
\end{equation}

We start estimating the energy of $\pi_0$. Notice that since the $\d_w$-metric speed of the curve $s\mapsto \hat T_0(t,x)_s$ is constantly equal to $|T_0(t)-t|$, we have
\[
\begin{split}
\iint|\dot\gamma_s|^2\,\d s\,\d\pi_0(\gamma)=\int|T_0(t)-t|^2\,\d \mu_0(t,x)=\int|T_0(t)-t|^2\,\d \nu_0(t).
\end{split}
\]
Now observe that since ${\rm spt}(\nu_0)\subset [t_0-\eps_0,t_0+\eps_0]$ and ${\rm spt}(\eta_0)\subset [t_0,t_0+\eps_{01}]$, in transporting $\nu_0$ to $\eta_0\mathcal L^1$ no point is moved for more than $\eps_0+\eps_{01}$, therefore we have
\begin{equation}
\label{eq:enpi0}
\iint|\dot\gamma_s|^2\,\d s\,\d\pi_0(\gamma)\leq |\eps_0+\eps_{01}|^2\leq 4\eps_{01}^2.
\end{equation}
An analogous argument yields the bound
\begin{equation}
\label{eq:enpi1}
\iint|\dot\gamma_s|^2\,\d s\,\d\pi_1(\gamma)\leq 4\eps_{01}^2.
\end{equation}
We pass to the energy of $\pi_{mid}$.  By the very definition of $\d_w$, the $\d_w$-squared speed of $s\mapsto (\gamma^I_{s}+T_0(t)-t_0,\gamma_{s})$ is equal to $|\dot\gamma^I_s|^2+w_\d^2(\gamma^I_{s}+T_0(t)-t_0)|\dot\gamma_s|^2$ and thus we have
\[
\begin{split}
\iint_0^1|\dot\gamma_s|^2\,\d s\,\d\pi_{mid}(\gamma)&=\iiint_0^1 |\dot\gamma^I_s|^2+w_\d^2(\gamma^I_{s}+T_0(t)-t_0)|\dot\gamma_s|^2\,\d s\,\d \sigma^t(\gamma) \,\d\nu_0( t)\\
&=\int_0^1 |\dot\gamma^I_s|^2\,\d s+\iint_0^1 w_\d^2(\gamma^I_{s}+T_0(t)-t_0)\left(\int|\dot\gamma_s|^2\,\d \sigma^t(\gamma) \right)\,\d s\,\d\nu_0( t)\\
\text{by \eqref{eq:lt}}\qquad&=\int_0^1 |\dot\gamma^I_s|^2\,\d s+\iint_0^1 w_\d^2(\gamma^I_{s}+T_0(t)-t_0)|\dot\gamma^X_s|^2\frac{\text{Sp}^2(t)}{(\int_0^1|\dot\gamma^X_s|\,\d s)^2}\,\d s\,\d\nu_0( t).
\end{split}
\]
Now observe that  Lemma \ref{le:balls} ensures that $\int_0^1|\dot\gamma^X_s|\,\d s=\d(x_0,x_1)$, hence from \eqref{eq:sppi} we obtain
\[
\frac{1}{(\int_0^1|\dot\gamma^X_s|\,\d s)^2}\lims_{\eps_0,\eps_1\downarrow0}\sup_{t\in[t_0-\eps_0,t_0+\eps_0]}{\rm Sp}^2(t)=1
\]
and from this information,  the continuity of $w_\d$, the weak convergence of $\nu_0$ to $\delta_{t_0}$ as $\eps_0\downarrow0$ we obtain
\[
\begin{split}
\lims_{\eps_0,\eps_1\downarrow0}\iint_0^1|\dot\gamma_s|^2\,\d s\,\d\pi_{mid}(\gamma)&\leq \int_0^1 |\dot\gamma^I_s|^2+w^2_\d(\gamma^I_s)|\dot\gamma^X_s|^2\,\d s=\d_w^2\big((t_0,x_0),(t_1,x_1)\big),
\end{split}
\]
which together with \eqref{eq:enpi0}, \eqref{eq:enpi1} and \eqref{eq:ktot} gives the conclusion.

\noindent{\bf Step 5: conclusion.} We assumed initially the existence of a geodesic $\gamma=(\gamma^I,\gamma^X)$ with $\gamma^I$ having image in the interior of $I$. To remove such assumption, it is sufficient to observe that there always exists a sequence of curves $\gamma_n=(\gamma^I_n,\gamma^X_n)$ with the same boundary data whose length converges to $\d_w((t_0,x_0),(t_1,x_1))$ and  such that $\gamma^I_n$ has image in the interior of $I$ for every $n$. Then we can repeat the above arguments with $\gamma_n$ in place of $\gamma$ and conclude by diagonalization.
\end{proof}

Summing up what we proved so far we obtain:
\begin{theorem}
Let $(X,\d,\mm)$ be an a.e.\ locally doubling and measured-length space with finite mass, $I\subset \R$ a closed, possibly unbounded, interval and $w_\d,w_\mm:I\to[0,+\infty)$ continuous  functions.  Assume that $w_\mm$ is strictly positive in the interior of $I$.

Then the warped product space $(X_w,\d_w,\mm_w)$ is a.e.\ doubling and measured-length. In particular, it has the Sobolev-to-Lipschitz property.
\end{theorem}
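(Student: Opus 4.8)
The plan is simply to assemble the three structural results established above, since the theorem is precisely their conjunction. First I would invoke Proposition~\ref{prop:dw}: because $(X,\d,\mm)$ is a.e.\ locally doubling and has finite mass and $w_\d,w_\mm$ are continuous, the warped product $(X_w,\d_w,\mm_w)$ is a.e.\ locally doubling. Next I would invoke Proposition~\ref{prop:mlw}: because $(X,\d,\mm)$ is a measured-length space of finite mass, $I\subset\R$ is a closed (possibly unbounded) interval, $w_\d,w_\mm$ are continuous, and $w_\mm$ is strictly positive in the interior of $I$, the warped product $(X_w,\d_w,\mm_w)$ is measured-length. Having verified that $X_w$ satisfies both hypotheses of Proposition~\ref{prop:stl}, I conclude that $X_w$ has the Sobolev-to-Lipschitz property, which is the last assertion of the statement.

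Since the theorem is merely the union of facts already proved, there is no genuine obstacle at this final stage; all the difficulty has been front-loaded into Proposition~\ref{prop:mlw}, whose Knothe-type interpolation and the accompanying density control for the test plans constitute the technical heart of the section. The one thing I would be careful about in writing the proof is that the hypotheses line up exactly: Proposition~\ref{prop:dw} needs only continuity of $w_\d,w_\mm$ and finiteness of $\mm$ (not positivity of $w_\mm$ in the interior of $I$), so the positivity assumption is used solely through Proposition~\ref{prop:mlw}, and the finite-mass assumption on $X$ is needed for $\mm_w$ to be Radon and hence for both propositions to apply.

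Finally, I would add the remark already anticipated in the discussion preceding the statement: since both properties — being a.e.\ locally doubling and being measured-length — are stable under warped products with an interval, the argument can be iterated. Thus, starting from an a.e.\ locally doubling measured-length base of finite mass, one may form repeated warped products (as in the iterated cone/spherical-suspension constructions) and the resulting space will again be a.e.\ locally doubling and measured-length, hence will still enjoy the Sobolev-to-Lipschitz property.
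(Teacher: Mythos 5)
Your proof is correct and is exactly the paper's argument: the theorem is obtained as a direct consequence of Propositions~\ref{prop:dw}, \ref{prop:mlw} and \ref{prop:stl}. Your additional remarks on hypothesis bookkeeping and iterability are accurate but not needed for the proof itself.
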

\begin{proof}
Direct consequence of Propositions \ref{prop:dw}, \ref{prop:mlw} and \ref{prop:stl}
\end{proof}

\def\cprime{$'$} \def\cprime{$'$}


\begin{thebibliography}{10}

\bibitem{AmbrosioGigliSavare08}
{\sc L.~Ambrosio, N.~Gigli, and G.~Savar{\'e}}, {\em Gradient flows in metric
  spaces and in the space of probability measures}, Lectures in Mathematics ETH
  Z\"urich, Birkh\"auser Verlag, Basel, second~ed., 2008.

\bibitem{AmbrosioGigliSavare11-3}
\leavevmode\vrule height 2pt depth -1.6pt width 23pt, {\em Density of
  {L}ipschitz functions and equivalence of weak gradients in metric measure
  spaces}, Rev. Mat. Iberoam., 29 (2013), pp.~969--996.

\bibitem{AmbrosioGigliSavare11}
\leavevmode\vrule height 2pt depth -1.6pt width 23pt, {\em Calculus and heat
  flow in metric measure spaces and applications to spaces with {R}icci bounds
  from below}, Invent. Math., 195 (2014), pp.~289--391.

\bibitem{AmbrosioGigliSavare11-2}
\leavevmode\vrule height 2pt depth -1.6pt width 23pt, {\em Metric measure
  spaces with {R}iemannian {R}icci curvature bounded from below}, Duke Math.
  J., 163 (2014), pp.~1405--1490.

\bibitem{AmbrosioGigliSavare12}
\leavevmode\vrule height 2pt depth -1.6pt width 23pt, {\em Bakry-\'{E}mery
  curvature-dimension condition and {R}iemannian {R}icci curvature bounds}, The
  Annals of Probability, 43 (2015), pp.~339--404.

\bibitem{Ambrosio-Pinamonti-Speight15}
{\sc L.~Ambrosio, A.~Pinamonti, and G.~Speight}, {\em Tensorization of
  {C}heeger energies, the space {$H^{1,1}$} and the area formula for graphs},
  Adv. Math., 281 (2015), pp.~1145--1177.

\bibitem{BBI01}
{\sc D.~Burago, Y.~Burago, and S.~Ivanov}, {\em A course in metric geometry},
  vol.~33 of Graduate Studies in Mathematics, American Mathematical Society,
  Providence, RI, 2001.

\bibitem{Cheeger00}
{\sc J.~Cheeger}, {\em Differentiability of {L}ipschitz functions on metric
  measure spaces}, Geom. Funct. Anal., 9 (1999), pp.~428--517.

\bibitem{DPG16}
{\sc G.~De~Philippis and N.~Gigli}, {\em From volume cone to metric cone in the
  nonsmooth setting}, Geom. Funct. Anal., 26 (2016), pp.~1526--1587.

\bibitem{Gigli13}
{\sc N.~Gigli}, {\em The splitting theorem in non-smooth context}.
\newblock Preprint, arXiv:1302.5555, 2013.

\bibitem{Gigli13over}
\leavevmode\vrule height 2pt depth -1.6pt width 23pt, {\em An overview of the
  proof of the splitting theorem in spaces with non-negative {R}icci
  curvature}, Analysis and Geometry in Metric Spaces, 2 (2014), pp.~169--213.

\bibitem{Gigli12}
\leavevmode\vrule height 2pt depth -1.6pt width 23pt, {\em On the differential
  structure of metric measure spaces and applications}, Mem. Amer. Math. Soc.,
  236 (2015), pp.~vi+91.

\bibitem{Ketterer13}
{\sc C.~Ketterer}, {\em Cones over metric measure spaces and the maximal
  diameter theorem}, J. Math. Pures Appl. (9), 103 (2015), pp.~1228--1275.

\bibitem{Shanmugalingam00}
{\sc N.~Shanmugalingam}, {\em Newtonian spaces: an extension of {S}obolev
  spaces to metric measure spaces}, Rev. Mat. Iberoamericana, 16 (2000),
  pp.~243--279.

\end{thebibliography}
\end{document}